\newcommand{\bD}{\mathbb{D}}
\newcommand{\bC}{\mathbb{C}}
\theoremstyle{plain}
\newtheorem{theorem}{Theorem}[section]
\newtheorem{corollary}[theorem]{Corollary}
\newtheorem{lemma}[theorem]{Lemma}
\newtheorem{proposition}[theorem]{Proposition}
\newtheorem{claim}{Claim}
\theoremstyle{definition}
\newtheorem{definition}[theorem]{Definition}
\theoremstyle{remark}
\newtheorem{remark}[theorem]{Remark}
\numberwithin{equation}{section}
\newcommand{\C}{\mathbb C}
\newcommand{\R}{\mathbb R}
\newcommand{\lam}{\lambda}
\newcommand{\Cc}{\mathcal C}
\newcommand{\Rc}{\mathcal R}
\newcommand{\Sc}{\mathcal S}
\newcommand{\D}{\mathbb D}
\newcommand{\N}{\mathbb N}
\newcommand{\B}{\mathbb B}
\DeclareMathOperator{\dist}{dist}
\DeclareMathOperator{\supp}{supp}
\newcommand{\ddc}{{dd^c}}
\DeclareMathOperator{\DSH}{DSH}
\begin{document}

\hyphenpenalty=10000

\title[]{Monotonicity of dynamical degrees for H\'enon-like and polynomial-like  maps}

\begin{author}[F.~Bianchi]{Fabrizio Bianchi}
\address{
CNRS, Univ. Lille, UMR 8524 - Laboratoire Paul Painlev\'e, F-59000 Lille, France}
  \email{fabrizio.bianchi$@$univ-lille.fr}
\end{author}

\begin{author}[T.C.~Dinh]{Tien-Cuong Dinh}
\address{National University of Singapore, Lower Kent Ridge Road 10,
Singapore 119076, Singapore}
\email{matdtc$@$nus.edu.sg }
\end{author}

\begin{author}[K.~Rakhimov]{Karim Rakhimov}

\address{National University of Singapore, Lower Kent Ridge Road 10,
Singapore 119076, Singapore}
\email{rkarim@nus.edu.sg }
\end{author}

\begin{abstract}
We prove that, for every invertible horizontal-like map (i.e.,
H\'enon-like map) in any dimension,
the sequence of the dynamical degrees
is increasing
until that of maximal value, which is the main dynamical degree, and decreasing after that. 
Similarly,
for
polynomial-like maps in any dimension, the sequence of dynamical degrees is increasing
until the last one, which is the topological degree.
This is the first time that such a property is proved outside of the algebraic setting. Our proof is based on  the construction of a suitable deformation for positive closed currents, which relies on tools
from pluripotential theory and the solution of the $d, \bar \partial$,
and $dd^c$ equations on convex domains.
\end{abstract}

\maketitle

\noindent
{\bf Notation.} $\D_r$ and $\D$ denote the disc of
radius $r$
and centre 0 and the unit disc in $\C$, respectively.
For $k\geq 1$,
$\mathbb B= \mathbb B_k$ denotes the unit ball in $\mathbb C^k$.
$M$ and $N$ will denote open bounded convex subsets of $\mathbb C^p$ and
$\mathbb C^{k-p}$, respectively, for some
fixed $1\leq p\leq k$.
We will usually 
denote by $M',M''$ (resp.\ $N', N''$) open bounded convex subsets of $\mathbb C^p$ (resp.\ $\mathbb C^{k-p}$) which are slightly smaller than $M$ (resp. $N$), 
with $M''\Subset M'\Subset M$
and $N''\Subset N'\Subset N$,
and set $D':=M'\times N'$ and $D'':= M'' \times N''$.
 We also denote by  $M^\star\subset \mathbb C^p$  and $N^\star\subset\mathbb C^{k-p}$  further auxiliary
 convex open sets satisfying $M''\Subset M^\star\subseteq M$ and  $N''\Subset N^\star\subseteq N$.
  For $p=k$, $N$ reduces to a point and we take $N''=N'=N$.

The definition of a horizontal-like map
$f$
on $M\times N$
is given in Definition \ref{d:HLM}, see also Remark \ref{r:PL} for the special case of polynomial-like maps (corresponding to $p=k$).
For such maps, 
the dynamical degrees 
 of type I
$\lam^\pm_{s}$
and 
 of type II 
$d^\pm_{s}$ are defined
in Definitions \ref{d:degrees-smooth} 
and \ref{d:degrees-currents}, respectively\footnote{The dynamical degrees $d_s^\pm$ were introduced in \cite{DNS} for 
invertible horizontal-like maps.
 In the case of polynomial-like maps, 
the dynamical degrees were introduced in \cite{DSallure} and \cite{DS10}, and denoted by
$d_s$ and $d^*_s$, respectively. We choose here a notation to avoid confusion between them.
}.
For $0\leq s \leq p$ 
(resp. $0\leq s \leq k-p$) 
we  denote by $\mathcal H_s(M^\star\times N^\star)$
(resp. $\mathcal V_{s}(M^\star\times N^\star)$)
the space
of  horizontal (resp. vertical)
positive closed currents of bi-dimension $(s,s)$ 
on $M^\star\times N^\star$
and of finite mass.
We will denote by $\mathcal H_s^{(1)}(M^\star\times N^\star)$
and $\mathcal V_{s}^{(1)}(M^\star\times N^\star)$ the subsets of $\mathcal H_s(M^\star\times N^\star)$ and $\mathcal V_{s}(M^\star\times N^\star)$ given by currents of mass 1,
respectively.
 For $0\leq s \leq p$  the semi-distance
$\dist_{M^\star\times N^\star}$ 
on 
$\mathcal{H}_s(M^\star\times N^\star) $ 
is defined
in \eqref{eq:dist}. A similar semi-distance is defined on
$\mathcal V_{s}(M^\star\times N^\star)$ for
 $0\leq s \leq k-p$.

The pairing $\langle \cdot, \cdot\rangle$
is used for the integral of a function with respect to a measure or more
generally the value of a current at a test form.
By $(s,s)$-forms and $(s,s)$-currents, we mean forms and currents of bi-degree $(s,s)$, respectively.
 The mass of a positive 
 (resp. negative)
 $(s,s)$-current $R$
 on an open subset $U \subset \C^k$
is $\|R\|_U:=\int_U R\wedge \omega^{k-s}$
(resp. $\|R\|_U:=-\int_U R\wedge \omega^{k-s}$),
where $\omega: = dd^c \|z\|^2$ for $z\in \mathbb C^k$
is the standard K\"ahler form on $\C^k$.
Recall that $d^c = \frac{i}{2\pi}(\bar \partial - \partial)$
and $\ddc = \frac{i}{\pi} \partial \bar \partial$. 
 The definition generalizes to positive and negative currents on compact K\"ahler manifolds.
 
  The 
  notations $\lesssim$
  and $\gtrsim$ stand for inequalities up to
a multiplicative constant (usually depending on the domains under consideration). 
We will 
use the 
notation $\pi$ 
to denote a  projection.
In general,  given two domains
$A$  and $B$, we will denote by $\pi_{A}$ and $\pi_{B}$
the natural projections of the product $A\times B$ to the factors $A$ and $B$, respectively.

\section{Introduction}

 Let $f\colon X \to X$ be 
dominant rational self-map
of a complex projective manifold,
or more generally
a dominant meromorphic self-map of a compact 
K\"ahler manifold of dimension $k$. Let $\omega$ be a
K\"ahler
form on $X$.
For any $0\leq s \leq k$ 
one can define the sequence
$\lam^+_{s,n}:= \| (f^n)_* (\omega^{k-s})\|_X$ 
of the masses of the
positive closed $(k-s,k-s)$-currents
$(f^n)_* (\omega^{k-s})$,
and the \emph{dynamical degree of order $s$ of $f$} as
\[
    \lam_s^+ :=
\limsup_{n\to\infty}
( \lam^+_{s,n})^{1/n}.\]
For cohomological reasons,
$\omega^{k-s}$
could be replaced by any smooth 
form 
 or
some positive closed current 
in the same cohomology class. In particular, the sequence $(\lam^+_{s,n})_{n\in\mathbb N}$
detects the volume growth of $s$-dimensional subvarieties (whenever they exist) under the action of $f^n$,  for  $n\in\mathbb N$.

It turns out that the sequences $(\lam^+_{s,n})_{n\in\mathbb N}$ are (almost)
sub-multiplicative, hence the $\limsup$ in the definition above 
is actually a limit
\cite{DS_regularization, DS_borne, Guedj05, RS97, Veselov92}, see also
\cite{Dang20,Truong20}. 
For a precise behaviour of these sequences in a number of settings, 
see also
\cite{BFJ08, DF21, FW12, Nguyen06, Truong14} and references therein.
It is also a consequence of the fundamental Khovanskii-Teissier inequalities that 
the sequence of the dynamical degrees $\lam^+_s$
is log-concave, i.e., 
the function $s\mapsto \log \lam^+_{s}$ is concave
\cite{DN_mixed, Gromov, Khovanskii_geometry, Teissier_index}. An immediate consequence of this property is that there exists
$1\leq p\leq k$
 such that
 \begin{equation}\label{e:intro-monotonicity}
\lam_0^+ \leq \lam^+_1
 \leq
{\ldots}
\leq \lam^+_p \geq
{\ldots}
 \geq  \lam^+_{k-1} 
\geq \lam^+_k.
\end{equation}
When $X$ is projective, this means that the growth rate 
under the action of $f$ of the volumes
of $s$-dimensional analytic subsets, for $s\leq p$, 
dominates that of $(s-1)$-dimensional analytic subsets,
and the reversed property is true for $s> p$.
Moreover, the so-called \emph{algebraic entropy}
$\log \lam^+_p$ 
of $f$ is 
larger than or equal to the (topological) 
entropy of the system
\cite{DS_regularization, DS_borne,Gromov03,Vu21}, see also
\cite{DTV10, Yomdin87}.

Let us stress that the proof of the log-concavity of the sequence of the degrees
 $\{\lambda_s^+\}_{0\leq s\leq k}$,
and as a consequence
that of 
\eqref{e:intro-monotonicity},
deeply relies on the algebraic setting and on cohomological arguments
(Hodge-Riemann theorem).
In particular, it breaks down
when considering non-compact or local situations, even when $\lam_k^+$ is the degree of maximal value (i.e., when the system is somehow geometrically expanding). We address this problem in this paper, with new tools coming from pluripotential theory.

\medskip

We consider in this paper invertible {horizontal-like maps
in any dimensions
\cite{DNS,DS1} and polynomial-like maps in any dimensions \cite{DSallure, DS10}.
\emph{Horizontal-like maps} are essentially holomorphic maps, defined on
some bounded (convex, for simplicity) subset 
$D$ of $\mathbb C^k$, that have an expanding
behaviour in $p$ directions
and contracting behaviour in the remaining $k-p$ directions.
Such expansion and contraction are of global nature, and these maps are in general not uniformly hyperbolic.
Assuming $D= M\times N$
(with $M\Subset \C^p$ and $N\Subset \C^{k-p}$
bounded convex domains), 
the map $f$ sends a vertical open subset of $D$
to a horizontal one
and, roughly speaking,
the vertical (resp. horizontal)
part of the boundary of the first to the vertical (resp. horizontal)
part of the boundary of the second.
As a particular case,
when $p=k$ the set $N$ reduces to a point and one recovers the notion of \emph{polynomial-like maps}, proper holomorphic maps
of the form $f\colon U\to V$, for some open bounded
subsets $U\Subset V \Subset \mathbb C^k$, with $V$ convex \cite{DSallure,DS10}.

Horizontal-like and polynomial-like maps can be seen
as the building blocks of larger systems and, in particular, 
give a good setting to study local dynamical problems in larger dynamical systems. Small perturbations of such maps still belong to  these classes (up to slightly shrinking the domain of definition), hence 
we get large classes of examples, and the families are
infinite-dimensional. As examples, perturbations of
lifts to $\mathbb C^{k+1}$
of holomorphic endomorphisms of $\mathbb P^k (\mathbb C)$ give examples of polynomial-like maps.
Perturbations of complex H\'enon
automorphisms of $\mathbb C^2$ \cite{BLS93, FS92, Sibony99} give horizontal-like maps with $k=2$ and $p=1$. Such maps were for instance considered in \cite{Dujardin04}. More generally,
we call
any
invertible horizontal-like map
a \emph{H\'enon-like map}.

\medskip

Given a H\'enon-like map or a polynomial-like map, one can introduce dynamical degrees as above. Denoting again by $\omega$ 
the standard
K\"ahler form on $\mathbb C^k$, one can roughly
define
(see Section \ref{s:degrees} for the formal definition)
\[
\lam^+_s 
:= \limsup_{n\to\infty} (\lam^+_{s,n})^{1/n},
\quad \mbox{ where }
\quad
\lam^+_{s,n}:= \|(f^n)_* \omega^{k-s}\|_{M'\times N'}.
\]
Here $M'\Subset M$ and $N'\Subset N$ are open convex sets slightly smaller than $M$ and $N$ respectively. In fact, 
we can show that $\lambda_s^+$ is independent of the choice of $D':=M'\times N'$, see Lemma \ref{l:indep-deg-smooth}. Because of the geometry of the problem,
these definitions
are 
only given
for $0\leq s \leq p$. 
On the other hand, 
for H\'enon-like maps, since $p<k$, one can also define the remaining degrees as
$\lam^-_s (f) := \lam^+_{s} (f^{-1})$
for $0\leq s \leq k-p$
(since $f^{-1}$ 
is
a
``vertical-like" map
 with $k-p$ expanding directions).

Observe that, a priori, the sequences $(\lam^+_{s,n})_{n\in\mathbb N}$ above need not be sub-multiplicative this time.
More importantly,
the lack of a
Hodge
theory means that, a priori, the resulting degree may change if one replaces $\omega^s$ with the integration on a given analytic set of dimension $k-s$, or more generally with a 
(positive closed) current of bi-degree $(s,s)$. Hence, 
for $0\leq s \leq p$
it is natural to also introduce the
degree
\[
d_s^+ := \limsup_{n\to \infty} (d^+_{s,n})^{1/n},
\quad \mbox{  where }
\quad 
d_{s,n}^{ +}:= \sup_{S}\|(f^n)_* (S)\|_{M'\times N}
\]
and $S$ runs over the set 
of all horizontal positive closed currents of bi-dimension $(s,s)$
and of mass 1 on $D':=M'\times N'$. These definitions are also independent of the choice of $D'$. 
As for $\lambda_s^-$, 
for  H\'enon-like maps
we can define
the remaining dynamical degrees as $d^-_s(f):=d^+_{s}(f^{-1})$ for every
$0\le s\le k-p$.

\medskip

The following theorems are our main results, which in particular
answer \cite[Question 6.3]{DNS}.
More complete and precise 
versions
 of Theorems \ref{t:main-intro}
and \ref{c:PL-intro}
are given in
Propositions
\ref{p:lambda} and \ref{p:PL-lambda}
and
Theorems \ref{t:degrees} and \ref{t:PL-degrees}.

\begin{theorem}\label{t:main-intro}
 Let 
$1\leq p<k$ be integers and
$f$ be 
 a H\'enon-like map
from a vertical open subset of a  bounded convex 
domain $D= M\times N \subset \mathbb C^p \times \mathbb C^{k-p}$
to a horizontal open subset of $D$. Then, 
the sequences 
$\{\lam^+_s\}_{0\leq s \leq p}$, $\{\lam^-_s\}_{0\leq s \leq k-p}$,
$\{d^+_s\}_{0\leq s \leq p}$, and $\{d^-_s\}_{0\leq s \leq k-p}$
satisfy
\[
  \lam^+_0 \leq \lam^+_1 \leq 
 \ldots
  \leq \lam^+_p=\lambda^-_{k-p}\geq
  \ldots\geq \lambda^-_1\geq\lambda^-_0
 \]
and
\[1= d^+_0 \leq d^+_1 \leq \ldots \leq d^+_p=d^-_{k-p}\geq\ldots\geq d^-_1\geq d^-_0.
\]
Moreover,  we have $\lambda^+_p = d^+_p \in \mathbb N$.
\end{theorem}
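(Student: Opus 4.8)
The plan is to establish the monotonicity of the four sequences by constructing, for each degree $s$ with $0\le s\le p-1$, a suitable \emph{deformation procedure} that turns a horizontal positive closed current of bi-dimension $(s,s)$ into one of bi-dimension $(s+1,s+1)$ without increasing (too much) the mass of its iterated pushforwards, and conversely a procedure going the other way, so that the two inequalities $\lam^+_{s,n}\lesssim \lam^+_{s+1,n}$-type bounds and their $d^\pm$-analogues can be read off. More precisely, for the chain $d^+_0\le d^+_1\le\cdots\le d^+_p$ I would show, for each $s$, that given a horizontal positive closed current $S$ of bi-dimension $(s,s)$ and mass $1$ on $D'$, one can produce (using the pluripotential-theoretic machinery and the solvability of the $d$, $\bar\partial$, $dd^c$ equations on convex domains announced in the abstract) a horizontal positive closed current $S'$ of bi-dimension $(s+1,s+1)$ on a slightly smaller domain whose mass is comparable and such that $\|(f^n)_*S\|\lesssim \|(f^n)_*S'\|$ up to subexponential error; taking $n$-th roots and the limit gives $d^+_s\le d^+_{s+1}$. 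The same scheme applied to $f^{-1}$ gives $d^-_s\le d^-_{s+1}$, and the analogous (easier) arguments with $S=\omega^s$ give the inequalities for $\lam^\pm$. Here I would invoke Lemma \ref{l:indep-deg-smooth} and the stated independence of $d^\pm_s$ on the choice of $D'$ freely, so that the unavoidable shrinking of domains in the deformation is harmless.

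For the peak relations $\lam^+_p=\lam^-_{k-p}$ and $d^+_p=d^-_{k-p}$, the point is that both sides describe the growth of the \emph{same} quantity from the two sides of the splitting: a horizontal positive closed current of bi-dimension $(p,p)$ on $M'\times N'$ is, up to the vertical fibration, essentially a measure on $M'$ that $f$ transports, and running the argument for $f$ and for $f^{-1}$ with $s=p$ (resp. $s=k-p$) produces the same sequence up to bounded multiplicative constants; hence the limits coincide. I would extract this from the symmetric structure of horizontal-like maps (vertical boundary to vertical boundary, horizontal to horizontal) together with the slicing/fibration description of top-dimensional horizontal currents.

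Finally, for $\lam^+_p=d^+_p\in\mathbb N$: the inequality $\lam^+_p\le d^+_p$ is immediate because $\omega^p$ restricted to a slightly smaller domain can be dominated by (a multiple of) a horizontal positive closed current of bi-dimension $(p,p)$, so $\lam^+_{p,n}\lesssim d^+_{p,n}$. For the reverse inequality and integrality, I would use that for top horizontal degree $s=p$ the pushforward $(f^n)_*S$ of a horizontal current of mass $1$ has mass bounded by the number of preimages, equivalently by the topological-type degree $d_n$ of the map $f^n$ in the horizontal direction; these degrees are multiplicative, $d_n=d_1^n$, so $d^+_p=d_1\in\mathbb N$, and $d_1$ is realized as $\lim \lam^+_{p,n}{}^{1/n}$ by comparing $\omega^p$ with a generic horizontal fiber-type current, giving $\lam^+_p=d^+_p$. \textbf{The main obstacle} I anticipate is the deformation step itself — controlling the mass of the iterated pushforwards of the modified current $S'$ in terms of those of $S$ uniformly in $n$, since this is exactly where the absence of a Hodge theory bites and where the solution of the $dd^c$-equation on convex domains with quantitative estimates must be deployed carefully (tracking how the potentials interact with $f^n$ and with the shrinking domains).
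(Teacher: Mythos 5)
Your outline of the type-I chain (smooth forms, Stokes-type manipulation of $\omega^{k-s}$), of the peak identities via the invariance of the slice mass for top-dimensional horizontal currents, and of $\lam^+_p=d^+_p=d\in\bN$ is consistent with the paper's route (Propositions \ref{p:lambda} and Lemmas \ref{l:degree-p}, \ref{l:indep-deg-smooth}, \ref{l:degreeind}). However, at the central step — the monotonicity $d^+_s\le d^+_{s+1}$ of the type-II degrees — your proposal has a genuine gap: you posit that from a horizontal positive closed current $S$ of bi-dimension $(s,s)$ one can produce a single current $S'$ of bi-dimension $(s+1,s+1)$ with $\|(f^n)_*S\|\lesssim\|(f^n)_*S'\|$ up to subexponential error, and you yourself flag the uniform-in-$n$ control as ``the main obstacle,'' but you give no mechanism that achieves it. Solving the $dd^c$-equation on convex domains only produces the larger-dimensional current; it does not by itself relate the growth of $(f^n)_*S$ to that of $(f^n)_*S'$, and this is exactly the point where the absence of cohomology makes the naive comparison fail (already for $S$ the integration current on an analytic set contained in a larger one, nothing forces the iterates of the smaller set to occupy a definite proportion of the mass of the iterates of the larger one).

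The paper closes this gap with a specific quantitative device that is absent from your plan: it embeds $S$ into a one-complex-parameter holomorphic family of deformations, packaged as a current $\Rc$ on $M'\times N\times\D$ (Theorem \ref{t:family}) whose slice at $\theta=0$ dominates $S$, whose slices move Lipschitz-continuously in $\theta$ for the semi-distance \eqref{eq:dist}, and which is made horizontal in $M'\times(N\times\D)$ by a cut-off-plus-$dd^c$ correction so that its pushforward to $D$ is a bona fide current of bi-dimension $(s+1,s+1)$. The comparison of growth rates is then obtained not current-by-current but through the functions $\phi_n(\theta)$ measuring the normalized mass of $(f^n)_*\Rc_{n,\theta}$: one shows $\phi_n(0)\gtrsim 1$, $\|dd^c\phi_n\|\lesssim (d^+_{s,n+1})^{-1}\tilde d^+_{s+1,n}$, and a Lipschitz bound in $\theta$ with constant $L^n$ (Claims \ref{claim1}--\ref{claim3}), and then invokes Skoda's exponential estimate for bounded families of $\DSH$ functions to convert largeness of $\phi_n$ at $\theta=0$ into largeness on a disc of radius comparable to $L^{-n}$, which yields $d^+_{s,n+1}\lesssim n\,\tilde d^+_{s+1,n}$ (Proposition \ref{p:degrees}) and hence $d^+_s\le d^+_{s+1}$. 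Without this deformation-plus-Skoda argument (or a substitute for it), your proposed step ``produce $S'$ with comparable iterated masses'' is an assertion of the theorem's main difficulty rather than a proof of it, so the proposal as written does not establish the second chain of inequalities.
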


Since $\log d^+_p$ is equal to the topological
entropy
$h_t (f)$
of $f$ \cite{DNS}, 
we deduce the following immediate consequence of Theorem \ref{t:main-intro}.

\begin{corollary}
Let $p,k,f$ 
be as in Theorem \ref{t:main-intro}. Then, 
for all $0\leq s^+ \leq p$ and $0\leq s^- \leq k-p$   
we have
\[
\log \lam_{s^{\pm}}^{\pm} \leq \log d_{s^{\pm}}^{\pm} \leq \log d_p^{+} =
h_t (f).
\]
\end{corollary}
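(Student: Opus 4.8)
The plan is to deduce the chain of inequalities from three ingredients: the monotonicity of the sequence $\{d^{\pm}_s\}$ given by Theorem \ref{t:main-intro}, the comparison $\lam^{\pm}_{s}\le d^{\pm}_{s}$ between the degrees of type I and of type II, and the identity $\log d^+_p=h_t(f)$ from \cite{DNS}. I would establish the three estimates $\log\lam^{\pm}_{s^{\pm}}\le\log d^{\pm}_{s^{\pm}}$, then $\log d^{\pm}_{s^{\pm}}\le\log d^+_p$, and finally $\log d^+_p=h_t(f)$ separately, and then compose them.

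For the first, I would note that for every $n$ one has $\lam^+_{s,n}\lesssim d^+_{s,n}$ with an implicit constant independent of $n$. Indeed, restricted to $D'=M'\times N'$ the smooth form $\omega^{k-s}$ can be expressed, via Fubini in the vertical variables, as an average of horizontal positive closed currents of bi-dimension $(s,s)$ whose masses on $D'$ are uniformly bounded; since $R\mapsto\|(f^n)_*R\|$ is additive on positive currents and $M'\times N'\subset M'\times N$, this gives $\lam^+_{s,n}=\|(f^n)_*\omega^{k-s}\|_{M'\times N'}\lesssim\sup_S\|(f^n)_*S\|_{M'\times N}=d^+_{s,n}$, the supremum being over horizontal positive closed currents of bi-dimension $(s,s)$ and mass $1$ on $D'$. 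Taking $n$-th roots and passing to the $\limsup$ yields $\lam^+_{s}\le d^+_{s}$ for $0\le s\le p$, and the same argument applied to $f^{-1}$ gives $\lam^-_{s}\le d^-_{s}$ for $0\le s\le k-p$. Passing to the (monotone) logarithm gives the first inequality of the statement.

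For the second, I would invoke the second chain in Theorem \ref{t:main-intro}, namely $1=d^+_0\le d^+_1\le\cdots\le d^+_p=d^-_{k-p}\ge\cdots\ge d^-_1\ge d^-_0$. Thus each $d^+_{s^+}$ ($0\le s^+\le p$) is $\le d^+_p$ along the increasing part, and each $d^-_{s^-}$ ($0\le s^-\le k-p$) is $\le d^-_{k-p}=d^+_p$ along the decreasing part; moreover every term is $\ge 1>0$, which for the $d^-_s$ follows by applying the increasing part of the same theorem to $f^{-1}$ (a horizontal-like map with $k-p$ in place of $p$). Since $\log$ is increasing on $(0,\infty)$ it preserves all of these inequalities, so $\log d^{\pm}_{s^{\pm}}\le\log d^+_p$. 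Finally, $\log d^+_p=h_t(f)$ is the entropy identity of \cite{DNS}. Composing the three steps gives $\log\lam^{\pm}_{s^{\pm}}\le\log d^{\pm}_{s^{\pm}}\le\log d^+_p=h_t(f)$, as claimed.

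I expect the only genuinely delicate point to be the uniform comparison $\lam^+_{s,n}\lesssim d^+_{s,n}$: writing $\omega^{k-s}$ as an average of horizontal currents only accounts for the components of $\omega^{k-s}$ of maximal vertical degree, so one must still control the contribution, under $(f^n)_*$ and up to a bounded factor uniform in $n$, of the remaining, more horizontal, components of $\omega^{k-s}$. This is exactly where the geometry of horizontal-like maps (the control of how vertical mass is transported under iteration) enters, and it is in any case subsumed in the more precise forms of our main results, Proposition \ref{p:lambda} and Theorem \ref{t:degrees}. Everything else is a formal manipulation of inequalities already at hand.
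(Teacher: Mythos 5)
Your second and third steps (the monotonicity $d^{\pm}_{s^{\pm}}\le d^+_p=d^-_{k-p}$ from Theorem \ref{t:main-intro}, the positivity $d_s^{\pm}\ge 1$, and the entropy identity $\log d_p^+=h_t(f)$ from \cite{DNS}) are exactly what the paper uses, and they are fine. The gap is in your first step, the comparison $\lam^+_{s,n}\lesssim d^+_{s,n}$, and you have in fact located it yourself: decomposing $\omega^{k-s}=(\omega_M+\omega_N)^{k-s}$ and slicing in the vertical variables only represents the component $\omega_M^{p-s}\wedge\omega_N^{k-p}$ of maximal vertical degree as an average of horizontal positive closed $(s,s)$-dimensional currents; the remaining components are not averages of \emph{horizontal} closed currents, so they are not directly controlled by $d^+_{s,n}$. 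Your proposed patch --- that this contribution is ``subsumed in'' Proposition \ref{p:lambda} and Theorem \ref{t:degrees} --- does not work: those results give monotonicity in $s$ within each family ($\lam_s^+\le\lam_{s+1}^+$, $d_s^+\le d_{s+1}^+$), not a comparison between the type I and type II degrees at the same $s$, which is precisely the inequality $\log\lam^{\pm}_{s^{\pm}}\le\log d^{\pm}_{s^{\pm}}$ asserted in the corollary. (They would let you bypass it only for the weaker conclusion $\lam_s^+\le\lam_p^+=d=d_p^+$.)

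The paper closes this point differently, and more simply, in Remark \ref{rmk:ineq-lambda-d}: since $s\le p$, Lemma \ref{l:general-smooth-bound-compact} produces a \emph{smooth horizontal positive closed} $(k-s,k-s)$-form $\Omega$ on $D'$ (built by averaging small perturbations of $s$-dimensional horizontal planes) with $\omega^{k-s}_{|D''}\lesssim\Omega$. One then bounds $\|(f^n)_*(\omega^{k-s}_{|D''_{v,n}})\|$ by a constant times $\|(f^n)_*(\Omega)\|_{M'\times N}\le \|\Omega\|\cdot d^+_{s,n}$, i.e.\ the whole form $\omega^{k-s}$, not just its most vertical component, is dominated at once by a single admissible competitor for $d^+_{s,n}$; the mismatch of domains ($D''$ for $\lam_s^+$, $D'$ for $d_s^+$) is then absorbed by the domain-independence of the degrees (Lemmas \ref{l:indep-deg-smooth} and \ref{l:degreeind}). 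With that substitution for your Fubini argument, the rest of your deduction goes through as written.
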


 As we will see, the proof 
of Theorem \ref{t:main-intro} can be applied  also
in
the case of polynomial-like maps, 
 even if these maps are in general not invertible.

\begin{theorem}\label{c:PL-intro}
 Let $k\geq 1$ be an integer and take open sets $U\Subset V\Subset \C^k$  with $V$ convex. Let $f\colon U\to V$ be a polynomial-like map of topological degree $d_t$.
Then the sequences 
$\{\lam^+_s\}_{0\leq s \leq k}$ and $\{d^+_s\}_{0\leq s \leq k}$
satisfy
\[\lambda_0^+ \leq \lambda_{1}^+ \leq \ldots \leq \lambda_{k}^{+}=d_t 
\quad 
\mbox{ and} 
\quad
1= d^+_0 \leq d_{1}^+\leq\ldots \leq d_{k}^{+}= d_t.\] 
 In particular, all
 the
 dynamical degrees
  of $f$
 are smaller than or equal to $d_t$.
\end{theorem}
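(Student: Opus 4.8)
The plan is to deduce Theorem \ref{c:PL-intro} from the H\'enon-like case (Theorem \ref{t:main-intro}) by a limiting/degenerate argument, treating a polynomial-like map $f\colon U\to V$ on $\C^k$ as the $p=k$ boundary case of a horizontal-like map on $M\times N$ where $N$ has collapsed to a point. Concretely, I would first recall from Remark \ref{r:PL} that a polynomial-like map is exactly a horizontal-like map with $p=k$, so that the convention $N''=N'=N=\{pt\}$ applies; the horizontal positive closed currents of bi-dimension $(s,s)$ are then just arbitrary positive closed $(k-s,k-s)$-currents on $V$ of finite mass, and the degrees $\lam^+_s$ and $d^+_s$ for $0\le s\le k$ are precisely those defined in the introduction. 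The key structural point is that the entire machinery of the paper — the construction of the deformation for positive closed currents, and the solution of the $d,\bar\partial,dd^c$ equations on convex domains — was set up for general $1\le p\le k$ and therefore applies verbatim with $p=k$; in particular Proposition \ref{p:PL-lambda} and Theorem \ref{t:PL-degrees}, which the excerpt names as the precise versions of this theorem, are the $p=k$ instances of Proposition \ref{p:lambda} and Theorem \ref{t:degrees}. So the bulk of the work is simply checking that no step in the H\'enon-like argument secretly used $p<k$ (e.g.\ the existence of the inverse map, or the vertical foliation structure).

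The second ingredient is the identification of the last degree. For a polynomial-like map of topological degree $d_t$, the operator $(f^n)_*$ acts on $(0,0)$-currents, i.e.\ on functions/measures, and $(f^n)_*\omega^0 = (f^n)_* \mathbb 1$ is the function counting preimages with multiplicity, whose integral over $V$ (against $\omega^k$, suitably normalized) grows like $d_t^n$; hence $\lam^+_{k,n} \asymp d_t^n$ and $\lam^+_k = d_t$. Likewise, for $s=k$ the only horizontal positive closed current of bi-dimension $(k,k)$ and mass $1$ on $D'$ is (a constant multiple of) the current of integration on $V$ — that is, a measure of total mass $1$ — and pushing forward by $f^n$ multiplies the mass by $d_t^n$ exactly when the measure is, say, Lebesgue-type and one takes the sup; so $d^+_{k,n}\asymp d_t^n$ and $d^+_k = d_t$. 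Combined with $d^+_p = \lam^+_p \in \mathbb N$ from Theorem \ref{t:main-intro} (now with $p=k$), this gives $\lam^+_k = d^+_k = d_t$. The normalization $d^+_0 = 1$ (hence $\lam^+_0 = 1$) is immediate since $(f^n)_*$ preserves the mass of a positive measure pushed forward under a proper map only up to the degree on the other end — more simply, $\lam^+_{0,n} = \|(f^n)_*\omega^k\|_{D'}$ stabilizes, and a mass-$1$ current of bidimension $(0,0)$ is a probability measure whose pushforward stays a probability measure, so $d^+_0 = 1$.

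With these two ingredients, the monotonicity chains $\lam^+_0\le\lam^+_1\le\cdots\le\lam^+_k$ and $1=d^+_0\le d^+_1\le\cdots\le d^+_k$ follow by reading off the ``increasing part'' of Theorem \ref{t:main-intro}: since there is no contracting direction, there is no ``$\geq$'' tail, and the sequences are monotone nondecreasing all the way to $s=k$, with the final value equal to $d_t$ by the computation above. The ``in particular'' clause is then trivial: every $\lam^+_s \le d^+_s \le d^+_k = d_t$, where the first inequality $\lam^+_s \le d^+_s$ holds because $\omega^{k-s}$ is (up to a constant) an admissible test current in the supremum defining $d^+_{s,n}$ — or more carefully, because in the same cohomology/pluripotential class $\omega^{k-s}$ can be compared with a mass-$1$ horizontal positive closed current, as in the proof of the corresponding inequality for H\'enon-like maps.

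I expect the main obstacle to be purely expository rather than mathematical: one must verify, line by line, that the proof of Theorem \ref{t:main-intro} (and the underlying deformation construction and $d,\bar\partial,dd^c$-solving lemmas) never invokes invertibility of $f$ or the genuine product structure with $k-p\ge 1$, so that the $p=k$ specialization is legitimate; the paper's own remark that ``the proof of Theorem \ref{t:main-intro} can be applied also in the case of polynomial-like maps, even if these maps are in general not invertible'' signals that this check goes through. The one place requiring a little genuine care is the non-invertibility: the H\'enon-like argument produces both $\lam^+$ and $\lam^-$ from $f$ and $f^{-1}$, but for polynomial-like maps only the $f$-side exists, so one keeps only the ``$+$'' statements and the single endpoint identification $\lam^+_p = d^+_p \in \mathbb N$, which here reads $\lam^+_k = d^+_k = d_t$.
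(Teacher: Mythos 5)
Your proposal follows essentially the same route as the paper: the authors prove the polynomial-like case precisely by rerunning the $p<k$ arguments with $p=k$ (Proposition \ref{p:PL-lambda} repeats the computation of Proposition \ref{p:lambda} with $\Omega\equiv 1$, and Proposition \ref{p:degrees} is stated and proved simultaneously for invertible horizontal-like and polynomial-like maps, invertibility being needed only for the $f^*$/``minus'' degrees via the remark after Lemma \ref{l:Lipschitz}), while the endpoint identification $\lambda_k^+=d_k^+=d=d_t$ comes, as you indicate, from the fact that $\mathcal H_k(D')$ consists of constant functions together with Lemmas \ref{l:degree-p}, \ref{l:indep-deg-smooth} and \ref{l:degreeind}. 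The only caveats are cosmetic: a positive closed current of bi-dimension $(k,k)$ is a nonnegative constant function rather than ``the current of integration on $V$'', the domain-independence of $\lambda_s^+$ in the polynomial-like case is quoted from Dinh--Sibony rather than re-derived, and your parenthetical claim that $\lambda_0^+=1$ is neither asserted by the theorem nor true in general.
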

\medskip

As
far as we are aware of, 
the only (non trivial)
cases
where the monotonicity of the
 dynamical
degrees could be established
until now
are of algebraic nature, and such monotonicity is a consequence of the 
 log-concavity property mentioned above. In a nutshell,
in order to establish the monotonicity of the sequence
$\{d_{s}^+\}_{0\leq s \leq p}$,
given a (horizontal positive closed) 
current  $S$ 
of bi-dimension $(s,s)$
with $s<p$, one needs to find another current,
of bi-dimension $(s+1,s+1)$ whose mass 
growth under iteration bounds the mass growth of $S$ under iteration. 
Although tricky, this is not a big problem when $S$ is smooth
(which essentially gives the monotonicity of the sequence
$\{\lam_{s}^+\}_{0\leq s \leq p}$).
The main problem arises when $S$ is not smooth, and already
in the case where $S$ is given by the integration on an analytic set. 
Even considering an analytic set containing the first one, it is not clear at all why the iterates of the first should behave nicely inside the iterates of the second. 

\medskip

Our solution to the problem can be roughly explained as follows. Given a (horizontal positive closed) current $S$ in $D$,
we first construct a ``holomorphic" 
family of positive closed currents 
$S_\theta$
parametrized by $\theta\in \mathbb D$ and with $S_0 \geq S$ \footnote{
For technical reasons, we often need to reduce slightly the domain $D$, and the estimates that we obtain at any fixed $n$ may depend on the chosen domain. On the other hand, we will show that the limit objects do not depend on such choice. The possible bad behaviour of the currents near the boundary of their domain of definition is a source of technical difficulty in all the paper. For the sake of simplicity, we do not specify this change of domain in this Introduction.}.
We then consider all these currents as
the slices of a unique current
$\mathcal R$,
of bi-dimension $(s+1,s+1)$,
on the space $D\times \mathbb D$, using the slices $D\times \{\theta\}$. The candidate to the role of current of bi-dimension $(s+1,s+1)$ in $D$ would then be $R:=(\pi_D)_* (\mathcal R)$, where $\pi_D\colon D\times \D \to D$ is the natural projection.
Two difficulties arise here. First, we need to make sure that $R$ is well-defined and horizontal in $D$. In order to do this, we suitably modify $\mathcal R$ in the space $D\times \mathbb D = M \times N \times \mathbb D$, in order to make it become \emph{horizontal in} $M \times (N\times \mathbb D)$, i.e., to have support contained in  
$M\times K$, for some compact subset $K$
of $N\times \mathbb D$. This makes both the projection
$(\pi_D)_* (\mathcal R)$ well-defined (since now the projection 
$\pi_D$ 
is proper on the support of $\mathcal R$), and horizontal there (since the projection of the support  of $R$ on $N$ is relatively compact). In order to do this, we exploit some results in the theory of 
the
$d, \overline \partial$, and $dd^c$
equations.
Observe that, in order to get all these controls, it is crucial to work with the extra flexibility given by (positive closed) currents, and not only with analytic subsets.

\medskip

Once $R$ is  well-defined, we still need to make sure that the growth of the mass of $(f^n)_* (R)$
dominates the growth of the mass of
$(f^n)_*(S)$. In order to get this, we need to pay extra attention, and get further estimates, during the construction of $\mathcal R$ and $R$. More precisely, we make sure that the family of deformations $S_\theta$, and the family of the \emph{slices} $ \mathcal R_\theta$
of $ \mathcal R$ with
$D\times \{\theta\}$
are sufficiently continuous in a suitable sense.
Studying the 
growth of the mass of $(f^n)_* (R)$ in $D$
amounts to study the mass growth of $(F^n)_* ( \mathcal R)$,
where $F:=(f,\mathrm{id})$ on 
$D\times \mathbb D$. We prove that the sequence 
of functions
$\phi_n$ on $\mathbb D$, where $\phi_n (\theta)$ is the mass of $(f^n)_* ( \mathcal R_\theta)$
(suitably normalized),
is bounded
with respect to a suitable norm (the DSH norm). A now-classical theorem by Skoda then implies that a large growth of this sequence at $\theta =0$ must imply a large growth of this sequence for $\theta$
sufficiently close to $0$. Going back to currents, this implies a bound (from below)
on the growth of the mass of 
$(f^n)_* ( \mathcal R_\theta)$, hence of
$(F^n)_* ( \mathcal R)$, and hence of $(f^n)_* (R)$. The assertion then follows.

\subsection*{Organization of the paper}
In Section \ref{s:horizontal} we
give the main construction, that we will use in the sequel, to produce a current of bi-dimension $(s+1,s+1)$ in a product space from one of
bi-dimension
$(s,s)$. The construction gives a good control on the support and norms. In Section \ref{s:degrees}
we define the dynamical degrees
for horizontal-like maps, and we give their first properties. The
two chains of inequalities in Theorems \ref{t:main-intro} and \ref{c:PL-intro} are proved in Sections \ref{s:monotone-smooth} and \ref{s:monotone-general}, respectively.

\subsection*{Acknowledgments}
The authors would like to thank the National University of Singapore and the University of Lille for the warm welcome and the excellent work conditions.

This project has received funding from
 the French government through the Programme
 Investissement d'Avenir
 (I-SITE ULNE /ANR-16-IDEX-0004,
 LabEx CEMPI /ANR-11-LABX-0007-01,
ANR QuaSiDy /ANR-21-CE40-0016,
ANR PADAWAN /ANR-21-CE40-0012-01)
and
from 
the NUS
and MOE through the grants
A-0004285-00-00
and 
MOE-T2EP20120-0010.

\section{Deformations of horizontal positive closed currents}\label{s:horizontal}

We fix in this section
integers $1\leq p \leq k$ and convex open bounded subsets $M'' \Subset M' \Subset M\Subset \mathbb C^p$ and $N'' \Subset N' \Subset N \Subset \mathbb C^{k-p}$ and set $D:= M\times N$,
$D' := M' \times N'$, and
$D'':= M''\times N''$.
Observe that, when $p=k$, we have $N=N'=N''$  and these sets reduce to a single point.
We denote by $\pi_M$ and $\pi_N$ the natural projections of $D$ on $M$ and $N$, respectively, and use similar notations for the projections of $D'$ to $M'$ and $N'$ and of $D''$ to $M''$ and $N''$.
We say that a subset $E\subset M\times N$
is \emph{horizontal} in $M\times N$
 if $\pi_N (E)\Subset N$
 and
 \emph{vertical} if $\pi_M (E)\Subset M$. A current in $M\times N$
 is horizontal (resp. vertical) if its support is horizontal (resp. vertical).
 These notions naturally generalize to subsets of and currents on
  other product spaces. 
  Note that when $p=k$
  any current in $M\times N$ is horizontal, since $N$ is a single point.

\medskip

Consider any
convex open sets $M^\star$ and $N^\star$ with $M''\Subset M^\star\subseteq M$ and  $N''\Subset N^\star\subseteq N$. For $0 \leq s \leq p$, we denote  by $\mathcal{H}_s(D^\star)$   
the set of all horizontal positive closed currents of bi-dimension $(s,s)$ of finite mass on  $D^\star:=M^\star\times N^\star$.
We consider the semi-distance 
on 
$\mathcal{H}_s(D^\star) $ 
given by 
\begin{equation}\label{eq:dist}
\dist_{D^\star}
(S,S'):=\sup_\Omega |\langle S-S',\Omega\rangle|, \end{equation}
where $\Omega$ is a real smooth vertical 
$(s,s)$-form
on  $M''\times N^\star$
whose $\Cc^1$-norm is at most 1. 
A similar semi-distance can be defined on the set $\mathcal V_{s}(D^\star)$   
of all vertical positive closed currents of bi-dimension $(s,s)$ of finite mass on 
$D^\star$, for $0\leq s\leq k-p$, by testing
against
real smooth horizontal $(s,s)$-forms
on 
$M^\star\times N''$
 whose $\Cc^1$-norm is at most 1.

\medskip
The main result of this section is the following technical theorem, which will be used in Section \ref{s:monotone-general}.
It gives the deformation of a horizontal positive closed current $S$ on $D$ as described at the end of the Introduction.

\begin{theorem} \label{t:family} 
Let $S$ be a horizontal positive closed current of bi-dimension $(s,s)$ 
and of mass $1$ on $M\times N$
with support contained in $M\times N''$,
for some
$0\leq s\leq p-1$. Then, there exist
a positive closed current $\Rc$ of bi-dimension $(s+1,s+1)$ on $M'\times N \times \D$
and a constant $c>0$ independent of $S$
such that
\begin{enumerate}
\item[{\rm (i)}] $\Rc$ is smooth outside $\pi_{\mathbb D}^{-1}(0)$; 
\item[{\rm (ii)}] the slice
$\Rc_\theta:=\langle \Rc,\pi_\D,\theta\rangle$ is  well-defined
as a current of $M'\times N$ of bi-dimension $(s,s)$
and of mass at most $1$ for every $\theta\in\D$, and $\|\mathcal R\|\leq 1$;
\item[{\rm (iii)}] $S\leq c\Rc_0$ on $M'\times N$;
\item[{\rm (iv)}] $\dist_{D'} (\Rc_\theta,\Rc_{\theta'})\leq |\theta-\theta'|$ for all $\theta, \theta'\in\D$;
\item[{\rm (v)}] 
$\Rc$ is horizontal in $M' \times (N \times \mathbb D)$, with horizontal support in $M'\times (N'\times \D_{1/2})$.
\end{enumerate}
In particular, $(\pi_{M'\times N})_*\mathcal R$ is  well-defined and $\|(\pi_{M'\times N})_*\mathcal R\|\le 1$.
\end{theorem}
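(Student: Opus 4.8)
The plan is to build the current $\Rc$ in several stages, controlling its support, slices, and regularity at each step. First I would produce the ``holomorphic family'' $S_\theta$ of positive closed currents of bi-dimension $(s,s)$ on (a slight shrinking of) $M\times N$, with $S_0\geq c^{-1}S$ for some constant $c>0$. The natural way to do this is to write $S$ locally via a potential and average over a disc's worth of automorphisms or translations of $\C^p$ acting on the horizontal factor: since $S$ is horizontal with support in $M\times N''$, a small translation in the $M$-direction keeps the support inside $M'\times N''$, and one can arrange the family to depend holomorphically on $\theta\in\D$ and to dominate $S$ at $\theta=0$ after multiplying by a harmless constant. Regularizing for $\theta\neq 0$ (e.g.\ convolving with a holomorphically varying approximate identity whose size shrinks to $0$ as $\theta\to 0$) gives smoothness away from $\pi_\D^{-1}(0)$, which is item (i). The mass-$1$ normalization and the Lipschitz estimate (iv) for $\dist_{D'}$ are then obtained from the explicit form of the family; here one uses that testing against vertical $(s,s)$-forms on $M''\times N^\star$ only sees the horizontal variation, and the holomorphic dependence on $\theta$ gives the modulus-of-continuity bound $\dist_{D'}(\Rc_\theta,\Rc_{\theta'})\leq|\theta-\theta'|$ after rescaling $\theta$.

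Next I would assemble the $S_\theta$ into a single positive closed current $\mathcal R_0$ of bi-dimension $(s+1,s+1)$ on $M'\times N\times\D$ whose slice by $\{\theta\}$ is $S_\theta$; concretely $\mathcal R_0$ is the ``graph'' current $\int_\D S_\theta\wedge[\,\cdot\,]$, and positivity and closedness follow from the holomorphic dependence (this is the standard construction turning a holomorphic family of positive closed currents into a positive closed current on the product, one extra bidimension). At this point the slices and (i)--(iv) are essentially in place, but $\mathcal R_0$ need not be horizontal in $M'\times(N\times\D)$: its support may approach $\partial(N\times\D)$. To fix this I would invoke the solvability of the $d$, $\bar\partial$, and $dd^c$ equations on convex domains (the tools announced in the introduction and, by the phrasing, established earlier): using a cutoff $\chi$ on $N\times\D$ equal to $1$ on $N'\times\D_{1/2}$ and supported in a compact subset, the current $\chi\,\mathcal R_0$ is no longer closed, but $d(\chi\,\mathcal R_0)$ is an exact (indeed $\bar\partial$- and $dd^c$-exact, after taking real parts) current supported in the region where $d\chi\neq 0$; solving the relevant equation there produces a correction term supported in $M'\times(N\times\D)$ with compact vertical support, whose addition restores closedness while not affecting the slice over $\theta=0$ on $M'\times N$ up to the constant in (iii). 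The upshot is a positive closed $\Rc$ with horizontal support in $M'\times(N'\times\D_{1/2})$, giving (v), while (ii)--(iv) survive (possibly worsening constants, absorbed into $c$). Finally, horizontality of $\Rc$ in $M'\times(N\times\D)$ makes $\pi_{M'\times N}$ proper on $\supp\Rc$, so $(\pi_{M'\times N})_*\Rc$ is well-defined, and pushing forward by a proper map does not increase mass when tested against the relevant forms, giving $\|(\pi_{M'\times N})_*\Rc\|\leq\|\Rc\|\leq 1$.

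The main obstacle I expect is the horizontalization step (making $\mathcal R_0$ horizontal while keeping it \emph{positive} and closed). Cutting off by $\chi$ destroys both positivity and closedness; solving $d$/$\bar\partial$/$dd^c$ equations repairs closedness but the correction term is a priori not positive, so one must either arrange the correction to be negligible in mass and supported away from where positivity is tested, or redo the cutoff at the level of potentials (so that $\chi$ modifies a plurisubharmonic potential into one that is still plurisubharmonic, e.g.\ via a max-type regularization), which is the delicate point where convexity of $N$ and $\D$ and the precise geometry of horizontal currents are used. Controlling the mass of $\Rc$ (the bound $\|\Rc\|\leq 1$ and $\|\Rc_\theta\|\leq 1$) simultaneously with this surgery is the part requiring the most care, and is presumably where the bulk of the author's proof is spent.
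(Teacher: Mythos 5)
Your first two stages match the paper's construction quite closely: the paper defines $h_{a,\theta}(z)=z+r\theta a$ for $a$ in the unit ball of $\C^k$, pulls $S$ back under $H_a(z,\theta)=z-r\theta a$, averages in $a$ against a smooth probability form, and thus obtains directly a positive closed current $\Sc$ of bi-dimension $(s+1,s+1)$ on the product whose slices are the convolved deformations, with $\Sc_0=S$, smoothness off $\pi_\D^{-1}(0)$, and the Lipschitz bound (iv) proved exactly as you suggest, by pulling the vertical test form back through $h_{a,\theta}$. One detail that matters later: the convolution is in all $k$ directions, not only the $M$-directions, because a uniform $\Cc^2$ bound on $\Sc$ over $\{|\theta|\geq 1/10\}$, with constants independent of $S$, is what drives the rest of the argument.

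The genuine gap is at the horizontalization step, which you correctly single out as the crux but leave unresolved. Your primary route (truncate by $\chi$, then repair closedness by solving a $d$, $\dbar$, or $\ddc$ equation) produces a correction term with no positivity, and your fallback, a max-type regularization of plurisubharmonic potentials, is not available here: the relevant potential has bidegree $(k-s-1,k-s-1)$, not $(0,0)$, so there is no max construction. The paper's resolution uses three ingredients you are missing. First, it solves $\ddc\,\mathcal U_\Sc=\Sc$ on a slightly smaller product domain with uniform mass and, crucially, uniform $\Cc^2$ estimates where $\Sc$ is smooth (Lemma \ref{l:ddcpsi}); the truncation is then performed at the level of the potential, $\tilde\Sc:=\ddc(\chi\,\mathcal U_\Sc)$ with $\chi$ a cutoff in the $(N\times\D)$-variables equal to $1$ near $N^\star\times\overline\D_{1/4}$, so closedness and horizontality are automatic and no correction equation is needed. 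Second, the support of $\Sc$ over $\{|\theta|\le 1/4\}$ already projects into $N^\star$, so $\tilde\Sc$ can fail to be positive only where $|\theta|>1/4$, i.e.\ exactly where the uniform $\Cc^2$ bound on $\mathcal U_\Sc$ (independent of $S$) shows that the defect of positivity is dominated by a fixed smooth form. Third, positivity is restored by adding a fixed large multiple of a smooth \emph{horizontal} positive closed $(k-s,k-s)$-form strictly positive on the transition region (Lemma \ref{l:general-smooth-bound-compact}; this is where the hypothesis $s\le p-1$ enters), and the final renormalization $\Rc:=b_2^{-1}(\tilde\Sc+b_1\Omega_+)$ yields the mass bounds in (ii) and turns the exact equality $\Sc_0=S$ into the inequality $S\le c\,\Rc_0$ of (iii). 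Without these three points — uniform $\Cc^2$ control of the potential away from $\theta=0$, cutting at the potential level rather than at the current level, and the additive positive horizontal form — your plan does not produce a current that is simultaneously positive, closed, horizontal, and of mass bounded independently of $S$.
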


Observe that the quantity $\dist_{D'} (\mathcal R_\theta, \mathcal R_{\theta'})$ in the fourth item is  well-defined 
by \eqref{eq:dist}
since, for all $\theta \in \mathbb D$,
the current $\mathcal R_\theta$ is 
horizontal on $M'\times N'$ by the fifth item.

\medskip

We refer to \cite{Federer} for the general theory of slicing of currents, and to \cite{DS1,Dujardin04}
in the particular case of horizontal positive closed currents. 
When  well-defined, the slice
$\langle \mathcal R, \pi_\D, \theta\rangle$ can be seen as the intersection
current
$\mathcal R \wedge [\pi^{-1}_\D (\theta)]$.
In particular, it is a current of bi-dimension $(s,s)$ on $M'\times N \times \D$, supported 
by $M'\times N \times \{\theta\}$,
that
we can identify  with
a current of 
bi-dimension $(s,s)$ on $M'\times N$, see also
\cite{AB93}.
In our case,
since $\mathcal R$ is smooth outside of $\pi_\D^{-1} (0)$, for $\theta \neq 0$
the slice
$\langle\mathcal R, \pi_\D, \theta\rangle$
is equal to  the restriction of $\mathcal R$ to $\pi_\D^{-1} (\theta)$.

\medskip

 Although, a priori, the first property in the statement will not be needed in the sequel, we will
 use smooth deformations 
 in order to obtain the other properties.
  In particular,
the proof of Theorem \ref{t:family} uses
the following lemma, which relies on the solution of the
$d$ and
$\bar \partial$ equations
by means of integral formula, see for instance \cite{BT82, DNS, HL84, HL88, Rudin80}.

\begin{lemma}
\label{l:ddcpsi}
Let $m\geq 2$ and $0\leq l \leq m-1$ be integers.
  Let $U,U',V,V'$ 
  be
   open convex domains of $\C^m$
   with $V' \Subset V \subseteq U'\Subset U$.  Let $\Psi$ be a  positive closed current of bi-degree $(m-l,m-l)$ on $U$, and 
   assume the $\Psi$ is smooth on $V$.
  Then there exists a negative $L^1$ form $\mathcal U_\Psi$
  of bi-degree  $(m-l-1,m-l-1)$ on $U'$, which is smooth on $V$,
   and a positive constant $c$
  (depending on the domains but  independent of $\Psi$)
  such that
\begin{equation}\label{eq:normineq}
    dd^c\mathcal U_\Psi=
    \Psi \mbox{ on } U', \quad
    \quad
     \|\mathcal U_\Psi\|_{U'}\le c\|\Psi\|_{U}, 
     \quad 
     \mbox{ and }
       \quad \|\mathcal U_\Psi\|_{\mathcal C^2 (V')}\le c\|\Psi\|_{\mathcal C^2 (V)}.
 \end{equation}  
\end{lemma}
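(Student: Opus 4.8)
The plan is to prove Lemma \ref{l:ddcpsi} by a standard integral-formula solution of the $dd^c$-equation, splitting the current $\Psi$ into a part supported near the boundary (handled by integral kernels, giving a smooth solution) and a part supported where $\Psi$ is already smooth (handled directly). First I would reduce the $dd^c$-equation to successive $\partial$ and $\bar\partial$ equations: since $\Psi$ is a positive closed $(m-l,m-l)$-current on the convex domain $U$, one has $d\Psi = 0$, so by a $d$-Poincaré lemma on a slightly smaller convex domain $U_1$ with $U' \Subset U_1 \Subset U$ there is a current $T$ of bidegree $(m-l-1,m-l)+(m-l,m-l-1)$ with $dT = \Psi$; then writing $T = T^{1,0} + T^{0,1}$ and using that $\Psi$ has pure bidegree, the $\partial\bar\partial$-lemma on convex domains (or equivalently solving $\bar\partial u = T^{0,1} - \partial(\text{something})$) produces the $(m-l-1,m-l-1)$-current $\mathcal U_\Psi$ with $dd^c \mathcal U_\Psi = \Psi$. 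All of this I would run through the explicit Cauchy–Fantappiè / Bochner–Martinelli–Koppelman integral operators on convex domains, as in \cite{BT82, HL84, HL88, Rudin80}, so that the solution operators are given by integration against kernels with the standard weak singularity.

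The key point is to get the three required controls simultaneously. For the mass bound $\|\mathcal U_\Psi\|_{U'} \le c\|\Psi\|_U$: the integral operators have kernels whose singularity is integrable and whose $L^1$-norm over $U'$ against the variable in $U$ is bounded by a constant depending only on the geometry of the nested domains, so a Fubini/Schur-test argument gives the $L^1$ bound on $\mathcal U_\Psi$ in terms of the mass of $\Psi$; the negativity of $\mathcal U_\Psi$ (after possibly adding a global closed smooth form, which one arranges to be $\le 0$, or by choosing the primitive appropriately, e.g.\ subtracting a large constant multiple of $\omega^{m-l-1}$) is obtained exactly as in \cite{DNS}. For the smoothness and $\Cc^2$-estimate on $V'$: I would fix a cutoff $\chi$ equal to $1$ on a neighborhood of $\overline{V'}$ and supported in $V$, write $\Psi = \chi\Psi + (1-\chi)\Psi$, apply the solution operator to each piece; the operator applied to $\chi\Psi$ (a smooth form) is smooth with $\Cc^2$-norm on $V'$ controlled by $\|\Psi\|_{\Cc^2(V)}$ via differentiation under the integral sign, while the operator applied to $(1-\chi)\Psi$, whose support avoids $\overline{V'}$, has a kernel that is smooth in the $V'$-variable, so this piece is also smooth on $V'$ with $\Cc^2$-norm bounded by $\|(1-\chi)\Psi\|_U \lesssim \|\Psi\|_U \lesssim \|\Psi\|_{\Cc^2(V)}$. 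Correcting the $dd^c$ of the cutoff error terms (which are $\partial\bar\partial$-exact smooth forms supported in $V \setminus V'$) requires an extra application of the solution operator, again smooth on $V'$; these corrections are all controlled in $\Cc^2(V')$ by $\|\Psi\|_{\Cc^2(V)}$.

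I expect the main obstacle to be bookkeeping the bidegrees and the passage from $d$-exactness to $dd^c$-exactness while keeping the form negative and keeping both the $L^1$ and the local $\Cc^2$ estimates uniform in $\Psi$. Concretely: the intermediate current $T$ with $dT = \Psi$ is not of pure bidegree, and turning it into a genuine $dd^c$-potential uses that on a convex (hence pseudoconvex and contractible) domain the relevant Dolbeault and $\partial\bar\partial$-cohomologies vanish with estimates; one must check that the solution operators for these auxiliary equations also satisfy the split "weak singularity plus smooth off-diagonal" structure so that the cutoff argument of the previous paragraph applies at each stage, and that the constants depend only on the fixed chain $V' \Subset V \subseteq U' \Subset U$ and not on $\Psi$. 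Once the operators are set up with these properties, the estimates \eqref{eq:normineq} follow by composing finitely many such bounded operators, and the negativity is arranged at the last step by subtracting a suitable positive closed smooth form (a constant multiple of $\omega^{m-l-1}$) whose $\Cc^2$-norm is a harmless additive constant absorbed into $c$.
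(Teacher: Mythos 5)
Your overall strategy is the intended one: the paper does not actually prove this lemma, it only points to the integral-formula solution of the $d$ and $\bar\partial$ equations on convex domains (Henkin--Leiterer, Rudin, Bott--Tu) and to \cite{DNS} for the negative $dd^c$-potential with mass control, and your reduction ($d$-primitive, then $\bar\partial$, all via kernels with weak singularity plus smooth off-diagonal part, plus a cutoff to exploit pseudo-locality) is exactly that route. However, there is one concretely false step: you bound the contribution of the far part by
``$\|(1-\chi)\Psi\|_U \lesssim \|\Psi\|_U \lesssim \|\Psi\|_{\mathcal C^2(V)}$''. The second inequality is not true: the mass of $\Psi$ on $U$ is in no way controlled by any norm of $\Psi$ on $V$ (the current may be enormous near $\partial U$ while vanishing on $V$). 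The same mass term is forced on you a second time by your negativity normalization: to keep $c$ independent of $\Psi$ you must subtract a multiple of $\omega^{m-l-1}$ of size comparable to $\|\Psi\|_U$ (and note this trick only applies to the bounded, non-singular part of the kernel output --- the full solution is merely $L^1$, so ``subtracting a large constant multiple'' of a smooth form cannot make it negative by itself; the singular part of the kernel has to be arranged to be negative, as in \cite{DNS}). So what your argument actually yields is
$\|\mathcal U_\Psi\|_{\mathcal C^2(V')}\le c\,\bigl(\|\Psi\|_{\mathcal C^2(V)}+\|\Psi\|_U\bigr)$, not the inequality as stated.

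This is not a defect you can repair by being cleverer: for a \emph{negative} potential the literal estimate of the lemma is impossible. Take $m-l=1$ and $\Psi$ a positive closed $(1,1)$-current vanishing on $V$ but not on $U'$; then $\mathcal U_\Psi$ is a non-positive psh function, and the stated bound would force $\mathcal U_\Psi\equiv 0$ on the open set $V'$, hence $\mathcal U_\Psi\equiv 0$ on $U'$ by the strong maximum principle, contradicting $dd^c\mathcal U_\Psi=\Psi\neq 0$. So the right statement carries the extra term $\|\Psi\|_U$ on the right-hand side, and that weaker form is all that is needed where the lemma is used (in the proof of Theorem \ref{t:family} both the mass of $\mathcal S$ and its $\mathcal C^2$-norm away from $\pi_{\mathbb D}^{-1}(0)$ are bounded by constants independent of $S$). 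Your write-up should therefore drop the false inequality, state and prove the estimate with the additional mass term, and make explicit that the negativity comes from the sign of the singular part of the kernel plus subtraction of $c\|\Psi\|_U\,\omega^{m-l-1}$, which is closed and so does not disturb $dd^c\mathcal U_\Psi=\Psi$. With those corrections the rest of your plan (Schur/Fubini bound for the $L^1$ estimate, cutoff plus smooth-off-diagonal kernel for interior $\mathcal C^2$ regularity at each of the finitely many stages) is sound and matches the references the paper invokes.
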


We will also need the following basic lemma, which will be used several times in the paper.

\begin{lemma}\label{l:general-smooth-bound-compact}
Let
$m\geq 2$, $1\leq p\leq m-1$, and $0\leq l \leq p$
 be integers.
Let $U\subset \mathbb C^p$ and $V\subset \C^{m-p}$ be bounded convex open domains. For every compact subset $K\Subset U\times V$ there exists
 a smooth horizontal positive closed  
 $(m-l,m-l)$-form
 $\Omega_K$
 on $U\times V$
  such that
 $\Omega_K$ is strictly positive on $K$.
\end{lemma}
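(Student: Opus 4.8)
The plan is to build $\Omega_K$ by hand as a finite sum of explicit smooth horizontal positive closed forms, each a ``bump'' form supported near a point of $K$, and then invoke compactness to get strict positivity on all of $K$. Since the statement asks only for a smooth \emph{horizontal} positive closed $(m-l,m-l)$-form that is strictly positive on the prescribed compact set, the core observation is that horizontality is a statement about the support of the form in the $V$-directions, so we must arrange the building blocks to be supported in sets of the form $(\text{open in } U)\times(\text{relatively compact in }V)$.

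First I would produce a single local model. Fix a point $z_0=(x_0,y_0)\in U\times V$ with $x_0\in U$, $y_0\in V$. Choose a smooth function $\chi\colon V\to[0,1]$, compactly supported in $V$, equal to $1$ on a neighbourhood of $y_0$; set $\beta:=dd^c\big(\chi(y)\,\|y-y_0\|^2\big)$, which near $y_0$ agrees with the standard Kähler form in the $V$-directions and has support inside $U\times(\operatorname{supp}\chi)$, a horizontal set. Then the closed $(1,1)$-form $\alpha_{z_0}:=\omega_U+\beta$, where $\omega_U=dd^c\|x\|^2$ is the standard Kähler form in the $U$-directions, is smooth, closed, horizontal (its $y$-support is contained in the closure of $\operatorname{supp}\chi\Subset V$, while it is globally defined in $x$), and strictly positive as a $(1,1)$-form on a neighbourhood $W_{z_0}$ of $z_0$. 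Raising to the appropriate power, $\Omega_{z_0}:=\alpha_{z_0}^{m-l}$ is a smooth horizontal positive closed $(m-l,m-l)$-form on $U\times V$ which is strictly positive on $W_{z_0}$; positivity of the wedge power is immediate since a positive power of a positive $(1,1)$-form is positive, and strict positivity on $W_{z_0}$ follows because there $\alpha_{z_0}$ is a Kähler form.

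Next I would cover $K$ by finitely many such neighbourhoods $W_{z_1},\dots,W_{z_r}$ using compactness of $K$, and set $\Omega_K:=\sum_{j=1}^r \Omega_{z_j}$. This is a finite sum of smooth horizontal positive closed $(m-l,m-l)$-forms, hence is itself smooth, closed, positive, and horizontal (a finite union of horizontal sets is horizontal). At any point of $K$ at least one summand is strictly positive while the others are positive, so $\Omega_K$ is strictly positive on $K$; here I would use that the sum of a strictly positive and a positive $(m-l,m-l)$-form (in the sense of positive forms of that bidegree) is strictly positive, which is elementary from the definition via testing against positive $(l,l)$-forms or, equivalently, by working pointwise in suitable coordinates.

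The only mildly delicate point — and the one I expect to need a sentence of care — is the bookkeeping around the notion of strict positivity for $(m-l,m-l)$-forms with $l>0$: unlike $(1,1)$-forms, a positive $(m-l,m-l)$-form need not be a power of a $(1,1)$-form, so ``strictly positive on $K$'' must be interpreted as lying in the interior of the positive cone at each point of $K$, and one should check that $\alpha_{z_0}^{m-l}$ indeed lies in that interior wherever $\alpha_{z_0}$ is Kähler (true, since $\omega^{m-l}$ is an interior point of the cone of positive $(m-l,m-l)$-forms for any Kähler $\omega$, and the property is stable under small perturbations). Everything else is routine partition-of-unity and compactness. Note the hypothesis $1\le p\le m-1$ guarantees $V$ is a genuine positive-dimensional domain so that the cutoff construction in the $y$-variables makes sense; the corner cases are not an issue here.
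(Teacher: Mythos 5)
Your local model breaks down on precisely the two properties the lemma is about. First, $\alpha_{z_0}=\omega_U+dd^c\big(\chi(y)\|y-y_0\|^2\big)$ is not positive: on the region where the cutoff $\chi$ decays, $dd^c\big(\chi(y)\|y-y_0\|^2\big)$ has negative eigenvalues in the $y$-directions, and $\omega_U=dd^c\|x\|^2$, which only involves the $dx_j\wedge d\bar x_j$'s, cannot compensate a negative direction in $y$; so $\alpha_{z_0}$ is not a positive $(1,1)$-form on $U\times V$ and neither is $\alpha_{z_0}^{m-l}$ a positive $(m-l,m-l)$-form (and there is no easy repair of the potential: a nonconstant plurisubharmonic function of $y$ with compact support in $V$ does not exist, by the maximum principle). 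Second, $\alpha_{z_0}$ is not horizontal: horizontality is a condition on the support of the form, i.e.\ on where its coefficients are nonzero, not on which differentials occur in it. Since $\omega_U$ has constant nonzero coefficients, its support is all of $U\times V$, and the expansion of $\alpha_{z_0}^{m-l}$ contains the term $\omega_U^{m-l}$, which is nonzero whenever $m-l\le p$ — in particular in the polynomial-like case $p=k$, $m=k+1$, $l=s+1$ where the lemma is actually applied — so $\Omega_{z_0}$, and hence $\Omega_K$, fails to be horizontal exactly in the cases needed. Note also that dropping $\omega_U$ to save horizontality does not help: forms such as $\omega_V^{m-l}$ or $\omega_U^a\wedge\omega_V^b$ are degenerate, i.e.\ not strictly positive, which is the very issue you flag at the end but do not resolve.

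The difficulty is that positivity, closedness, horizontal support and strict positivity at a point must be achieved simultaneously, and truncating a K\"ahler potential cannot do this, because cutting off destroys positivity. The paper's construction avoids potentials altogether: since $l\le p$, one takes an $l$-dimensional affine plane $\Pi_z$ through $z$ lying in the horizontal directions (inside $U\times\{\pi_V(z)\}$, hence away from $\overline U\times\partial V'$ for a suitable $V'\Subset V$ containing $\pi_V(K)$), and averages the integration currents of small perturbations of $\Pi_z$ — small translations \emph{and} small tilts — against a smooth kernel. Positivity and closedness are automatic for such an average; horizontality holds because, over the bounded base $U$, a slightly tilted translate stays in $U\times V''$ with $V''\Subset V$; smoothness comes from the convolution; and averaging over an open set of directions (not merely translations, which would only give a degenerate form like a transverse volume form) yields strict positivity at $z$, hence on a neighbourhood of $z$. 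Your final compactness-and-finite-sum step is fine and is also how the paper concludes, but the local building block has to be of this ``averaged planes'' type rather than a cut-off K\"ahler form.
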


\begin{proof}
Fix an open convex set
$V'\Subset V$ such that $K \subset  U\times V'$. 
 Take $z\in K$. Since  $l$ satisfies
  $0\le l\le p$, we can find an 
  $l$-dimensional complex plane $\Pi_z$
  passing through $z$ and
  contained in $U\times \{z\}$, so that
  it does
  not intersect $\overline{U}\times \partial V'$. 
  By using a convolution, we can
  average small perturbations of $[\Pi_z]$
  to obtain a closed positive horizontal $(m-l,m-l)$-form
  $\Omega_z$ on  $U\times V'$,
  which is strictly positive at $z$. By continuity, 
  $\Omega_z$
  is actually strictly positive on 
  a neighbourhood of $z$.
  By taking a finite sum of such $\Omega_z$'s,
  we can construct a positive closed horizontal form 
  $\Omega$
  on $U\times V'$ which is strictly positive
  in a neighbourhood of $K$.
   The assertion follows.
\end{proof}

\proof[Proof of Theorem \ref{t:family}] 
We fix $M^\star$ and $N^\star$ 
convex open sets satisfying $M'\Subset M^\star\Subset M$ and $N''\Subset N^\star\Subset N'$,
and set $D^\star := M^\star \times N^\star$. 
Fix $r>0$ sufficiently small so that
the $3r$-neighbourhood of 
$M'$ (resp. $M^\star, N'', N^\star$)
is 
contained in 
$M^\star$ (resp. $M,N^\star,N'$).
Denote by $\mathbb B$  the unit ball of $\mathbb C^k$
and let 
$\mathbb D_2$
be the disc centred at 0 and of radius $2$ in $\mathbb C$.
For $a \in \mathbb B$ and
every $\theta \in \D_2$ define 
the holomorphic automorphism
$h_{a,\theta}:\mathbb{C}^k
\to \mathbb{C}^k$
as
$$h_{a,\theta}(z)=z+r\theta a$$
and 
the holomorphic submersion
$H_{a}: \C^k \times \D_2\to \C^k $
as
     $$H_{a}(z,\theta):=h^{-1}_{a,\theta}(z)=z-r\theta a.$$
    Define $\mathcal{S}^{a}:=(H_{a}^* S)_{| D^\star\times \D_2}$. Then $\mathcal{S}^{a}$ defines a current
     on $ D^\star\times \D_2$ of 
     bi-dimension  $(s+1,s+1)$.
     Since $N$ is convex  and $h_{a,\theta}$ is close to the identity
     (by the choice of $r$), 
     for every $\theta \in \D_2$
     the set $(\supp \mathcal{S}^{a})\cap \pi_{\D_2}^{-1}(\theta)$ 
     is a horizontal set in $ M^\star\times (N^\star\times \{\theta\})$,
     where $\pi_{\D_2}\colon D^\star \times \D_2\to \D_2$ is the natural projection. Hence, the slice
       $(\mathcal{S}^{a})_\theta := \langle \mathcal S^{a}, \pi_{\D_2}, \theta\rangle$ is a horizontal current  of bi-dimension  $(s,s)$ in $M^\star\times N$, supported on $M^\star\times N^\star$
     (where we identify $M^\star \times N \times \{\theta\}$ with $M^\star\times N$).
     Observe that, with this identification, we have $(\mathcal S^a)_\theta = 
     (h_{a,\theta}^{-1})^* S$ on $D^\star$
     and, in particular, $(\mathcal{S}^{a})_0=S$ on $D^\star$.
     Moreover, the dependence $\theta \mapsto (\mathcal{S}^{a})_{\theta}$ is continuous
     for the weak topology of currents.

\medskip

 Let $\rho(a)$ be a 
 fixed smooth 
 positive
  form of maximal degree 
 with compact support in $\B$
 and of integral 1. It defines a probability measure on $\mathbb B$.
  Set
  $$\mathcal{S}:=\int_{\B} \mathcal{S}^{a} \rho(a).$$
  Then $\mathcal S$ is a current on $M^\star \times N\times \D_2$, supported on $D^\star \times \D_2$.
  For $\theta \in \D_2$,
  we denote 
    by $\mathcal S_\theta:=\langle\mathcal S, \pi_{\D_2}, \theta\rangle$
  the slice current of $\mathcal S$ by $\pi_{\D_2}^{-1}(\theta)$, and we identify it to
  a (positive closed) current on $D^\star$
    of bi-dimension $(s,s)$.

\medskip

\noindent \textbf{Claim.}
The current $\mathcal S$
satisfies the following properties:
  \begin{enumerate}
 \item $\mathcal S_0=S$ on $M^\star\times N$;
 \item $\mathcal S_{\theta}$ 
is smooth for all $\theta\in \D_2\setminus\{0\}$;
\item $\|\mathcal S\|_{\mathcal C^2}\leq \tilde c$ outside $M^\star \times N \times \D_{1/10}$ and $\|\mathcal S\|\le \tilde{c}$ on $M^\star \times N \times \D_{2}$;
\item
 $\dist_{D'} (
 \mathcal S_\theta,
 \mathcal S_{\theta'})
 \leq \tilde{c}|\theta-\theta'|$
for all $\theta, \theta' \in \D_2$;
  \end{enumerate}
where $\tilde c$ is a positive constant
independent of $S$.

\medskip

Observe that, as $\mathcal S_\theta$
is supported on $M^\star \times N^\star$ for all $\theta \in \D_2$, we can see its restriction to $M'\times N^\star$
as 
 a (horizontal) current on $M'\times N'$. Hence, the quantity $\dist_{D'}(
 \mathcal S_\theta,
 \mathcal S_{\theta'})$
 in the last item 
 is well-defined
 by \eqref{eq:dist}.

  \begin{proof}[Proof of the Claim]
  It follows
  from the definition that $\mathcal S_0=S$ on $M^\star\times N$.
  To prove the second item,
we show that the coefficients of  $\mathcal S_\theta$ are smooth.
First, observe that
\[\mathcal S_\theta = \int_\B \mathcal (S^a)_\theta \rho(a)
\quad 
\mbox{ for all } \theta \in \D_2.\]

 For any 
two given  
multi-indices
$I:=(i_1,\dots,i_{k-s})$ and $J:=(j_1,\dots,j_{k-s})$
with $i_l,j_l\in\{1, \dots, k\}$ for all $1\leq l\leq k-s$,
let $\gamma(z)$,  
$\gamma^a_{\theta} (z)$, 
and $\gamma_\theta (z)$
be the coefficients 
of $dz_I\wedge d\bar{z}_J:=dz_{i_1}\wedge\dots\wedge dz_{i_{k-s}}\wedge d\bar{z}_{j_{1}}\wedge\dots\wedge d\bar{z}_{j_{k-s}}$
in  ${{S}}$, 
$(\mathcal{S}^{a})_{\theta}$, 
and ${\mathcal S}_\theta$, respectively.
Consider the change 
of coordinates on $\C^k$
given by the translation
\begin{equation}\label{e:changecoor}
    z \mapsto w= h_{a,\theta}^{-1} (z, \theta) = z-r\theta a.
\end{equation}
  Since
  $(\mathcal S^a)_\theta=(h^{-1}_{a,\theta})^*S$,
  we have
  $\gamma^{a}_\theta (z)= 
   \gamma(w)$. Hence, we have
  $$\gamma
  _\theta (z) =
  \int_{w \in \C^k} 
  \gamma
   (w) \rho\left(\frac{z-w}{r\theta}\right).$$
  Thus, $\gamma_\theta$ is smooth for $\theta\ne0$. It follows that
  ${\mathcal S}_\theta$ is smooth for $\theta\in\D_2\setminus\{0\}$. The second assertion follows.

\medskip

 We now prove the third item.
 In the variables $\zeta=(\zeta',\zeta_{k+1}):=(z,\theta)$ with $\zeta':=(\zeta_1, \dots,\zeta_k)$, let $\sigma^a(\zeta)$ and $\sigma(\zeta)$ be the coefficients of  $d\zeta_I\wedge d\bar{\zeta}_J:=d\zeta_{i_1}\wedge...\wedge d\zeta_{i_{k-s}}\wedge  d\bar{\zeta}_{j_1}\wedge...\wedge d\bar{\zeta}_{i_{k-s}}$ in $\mathcal{S}^a$ and $\mathcal{S}$ respectively,  where 
$I:=(i_1,\dots,i_{k-s})$ and $J:=(j_1,\dots,j_{k-s})$
with $i_l,j_l\in\{1, \dots, k+1\}$ for all $1\leq l\leq k-s$. 
 Consider this time
the change of coordinates on $\C^{k+1}$ given by
$$(\zeta',\zeta_{k+1})\mapsto (w,\zeta_{k+1})=(\zeta'-r\zeta_{k+1}a, \zeta_{k+1}).$$
Then, we have
\begin{equation}\label{eq:sigmazeta}
    \sigma (\zeta) =
  \int_{w \in \C^k} 
  \sigma^a
    (w,\zeta_{k+1}) \rho\left(\frac{\zeta'-w}{r\zeta_{k+1}}\right).
\end{equation}
 Since $\mathcal{S}^{a}=(H_{a}^* S)_{| D^\star\times \D_2}$,  we can see that $\sigma^a(w,\zeta_{k+1})$ is independent of  the coordinate $\zeta_{k+1}$.  So the right-hand side  of \eqref{eq:sigmazeta} (and hence $\sigma (\zeta) $) is smooth outside $\zeta_{k+1}=0$.
Hence, $\mathcal{S}$ is smooth outside $\pi^{-1}_{\bD_2}(0)$.  As the estimates are uniform out of a neighbourhood of 
$\{\zeta_{k+1}=0\}=\{\theta=0\}$, this gives the proof 
of the first part of the 
item.
For the 
second part,
 observe that
  $$\mathcal{S}=(\pi_{D^\star \times \D_2})_*\left(\rho(a)\wedge \left.H^*(S)\right|_{D^\star\times\bD_2\times \B}\right) $$
   where $H:\bC^k\times \bD_2\times\B\to\bC^k$ is defined as $H(z,\theta,a)=H_{ a}(z,{\theta}),$
    and $\pi_{ D^\star \times \D_2}:D^\star\times \bD_2\times\bC^k\to D^\star\times \bD_2$ is the natural projection. Since $H$ is a submersion, the masses of $\rho(a)\wedge H^*(S)$ on $D^\star\times \bD_2\times\B$ 
  and
   of its push-forward
    to $D^\star\times \D_2$
   are bounded above by a constant 
    which only depends on $\rho$ and the considered domains. This
   completes the proof of the third assertion in the claim.

\medskip

Let us prove the last item.
Let $\Omega$ be a smooth real 
$(s,s)$-form 
with vertical support in $M''\times N'$ 
and $\mathcal{C}^1$-norm 
less than or equal to 1. 
Then, for all $\theta \in \D_2$, we have
\begin{equation}
\label{e:S-Omega-theta}\langle \mathcal S_{\theta}, \Omega \rangle
=\int_{a \in \B} \langle (\mathcal{S}^{a})_{\theta},\Omega\rangle \rho(a)
=\int_{a \in \B} 
\langle S,(h_{a,\theta})^*\Omega\rangle \rho(a)
=\langle S, \Omega_\theta\rangle,
\end{equation}
where we set 
\[\Omega_\theta:=\int_{a \in \B} (h_{a,\theta})^*(\Omega)  \rho(a).
\]
Since $\Omega$ is
a vertical form with vertical support in
$M''\times N'$, 
$\Omega_\theta$ is  well-defined as a vertical form on $M^\star \times N^\star$. As $S$ is supported on $M\times N''$, the pairing in last term in \eqref{e:S-Omega-theta} is  well-defined.

\medskip

Since $\Omega$ is smooth and real, and $\|\Omega\|_{\mathcal{C}^{0}(M''\times N')}\leq 1$,
there exists a constant
$c_1$ independent of $\Omega$
such that
\[-c_1
\omega^{ s}\|\Omega_{\theta_1}-\Omega_{\theta_2}\|_{\mathcal{C}^0(D^\star)}\le  \Omega_{\theta_1}-\Omega_{\theta_2}\le c_1 \omega^{s} \|\Omega_{\theta_1}-\Omega_{\theta_2}\|_{\mathcal{C}^0(D^\star)}
\mbox{ for all } \theta_1, \theta_2 \in \D_2.\]
Moreover,
since $\|\Omega\|_{\mathcal C^1 (M''\times N')}\le 1$
there exists also 
 a constant
$c_2$
(depending on $\rho$ but independent of $\Omega$)
such that 
\[\|\Omega_{\theta_1}-\Omega_{\theta_2}\|_{\mathcal{C}^0 (D^\star)}\le {c}_2|\theta_1-\theta_2|
\mbox{ for all } \theta_1, \theta_2 \in \D_2.\]
 Since $S$ has mass 1, we have
$$|\langle S_{\theta_1}-S_{\theta_2}, \Omega \rangle|=|\langle S, \Omega_{\theta_1}-\Omega_{\theta_2} \rangle|\le c_1\|\Omega_{\theta_1}-\Omega_{\theta_2}\|_{\mathcal C^0 (D^\star)}\le \tilde{c}|\theta_1-\theta_2|,$$
where $\tilde{c}=c_1c_2$
 is independent of $\Omega$.
So, we have $\mathrm{dist}_{D'}(S_{\theta_1},S_{\theta_2})\le \tilde{c}|\theta_1-\theta_2|$. The proof
of the claim is complete. 
\end{proof}

The Claim implies that the current
$\delta \mathcal{S}$ satisfies properties (i)-(iv) in the statement of Theorem \ref{t:family} for any constant $\delta>0$ small enough.
We now need to modify this current in order to satisfy also
the last property. Observe that
$\pi_N(\supp  \Sc \cap (M^\star \times 
 N
\times \overline{\D}_{1/4}))\Subset N^\star$ by the choice of $r$.

\medskip

Since $\mathcal S$ is  positive 
and closed,
by using Lemma \ref{l:ddcpsi}
for $\Psi = \mathcal S$ on $U'= D'\times \D$ and for $ U:= M^\star\times N \times\D_2$,  $m=k+1$ and $l=s+1\le k$, we see that
there exists a current $\mathcal{U}_{\mathcal S}$ on $D' \times \D$ such that $dd^c \mathcal{U}_\mathcal{S}=\Sc$ on $D'\times \D$.
Moreover, $\mathcal U_\mathcal{S}$ 
is smooth on 
$D' \times (\D\setminus \{0\})$, with good $\mathcal C^2$ estimates on compact subsets of $D' \times (\mathbb D \setminus \{0\})$.

\medskip

Denote $W:= N' \times \D$.  Let $0\le\chi\le 1$ be a non-negative 
smooth
cut-off function, horizontal in $M'\times W$,
which is
equal to 1 on a neighbourhood of $M'\times N^\star\times\overline{\D}_{1/4}$ and
vanishes on $M'\times (N_1 \times \D_{1/3})^c$ for some
 open convex set
 $N_1$ such that $N^\star\Subset N_1\Subset N'$.
Define, on $M'\times W$,
$$
\begin{aligned}
\tilde \Sc :=dd^c(\chi \mathcal U_\Sc) &
=dd^c \chi \wedge\mathcal U_\Sc
+ d\mathcal U_\Sc\wedge d^c\chi
+d\chi\wedge d^c \mathcal U_\Sc+\chi dd^c \mathcal U_\Sc\\
&
=dd^c \chi \wedge\mathcal U_\Sc+
d\mathcal U_\Sc \wedge d^c\chi +d\chi\wedge d^c \mathcal U_\Sc+\chi \Sc.
\end{aligned}$$ 
Then $\tilde{\mathcal S}$
is horizontal and closed
in $M'\times W$. It is smooth outside $\pi_{\mathbb D}^{-1}(0)$.
Moreover, we have
$\tilde {\mathcal{S}} =\mathcal{S}$ on $M'\times N^\star\times {\D}_{1/4}$
(since $dd^c \chi = d\chi = d^c \chi =0$ and $\chi=1$ there)
and
$\supp \tilde{\mathcal{S}}\subset M'\times \overline{N_1}\times 
\overline \D_{1/3}$
since $\chi$ vanishes on $M'\times (N_1 \times \D_{1/3})^c$.
In particular, $\tilde{\mathcal S}$
satisfies (v). On the other hand,
 $\tilde \Sc$ is not necessarily positive on $M'\times 
 {N_1}\times 
(\overline \D_{1/3} \setminus 
{\D}_{1/4})
$. However, it is bounded from below by some smooth negative form independent  of $S$ because the $\mathcal{C}^2$-norm of $\mathcal{U}_\mathcal{S}$ is bounded there  by a constant independent 
 of $S$.
We now construct a 
smooth horizontal
positive
closed  $(k-s,k-s)$-form $\Omega_+$ on $M'\times N'\times \D_{1/2}$
such that $\tilde \Sc + \Omega_+$ is (horizontal, closed and) positive, and has good support and
norm estimates.

\medskip

Set $F:= \overline{N_1}\times 
\overline \D_{1/3}$.
By Lemma \ref{l:general-smooth-bound-compact} applied with $m=k+1$, $l=s+1$ (which is possible since $s\leq p-1$ by assumption),
$U=M$, $V= N'\times \D_{1/2}$, 
 and
$K= \overline{M'}\times F$,
there exists a smooth
horizontal positive closed
$(k-s,k-s)$-form $\Omega_+$ on $M\times N' \times \D_{1/2}$
which
is strictly positive on $\overline{M'}\times F$.
 Since  $\tilde {\mathcal{S}} =\mathcal{S}$ on  $M'\times N^\star\times {\D}_{1/4}$, and outside this set 
 the
$\mathcal{C}^0$-norm of $\tilde {\mathcal{S}}$ is bounded by
a constant independent  of 
$S$, by taking 
large enough
constants $b_1, b_2>0$
(independent of $S$)
 we can see that
the current
$\Rc:= b_2^{-1} (\tilde {\mathcal S} + b_1 \Omega_+)$
satisfies the requirements on the statement. 
This concludes the proof.

\endproof

\section{Dynamical degrees of horizontal-like maps}\label{s:degrees}

We again fix in this section 
 integers $1\leq p\leq k$ and  
a bounded convex domain $D= M\times N \subset \mathbb C^p \times \mathbb C^{k-p}$.  
We also set  $\partial_v D:=\partial M\times \overline{N}$ (resp. $\partial_h D:= \overline{M}\times \partial N$)  and we call it the \textit{vertical} (resp. \textit{horizontal}) boundary of $D$.
We denote by
$\pi_1$ and $\pi_2$
the first and the second projections of $D\times D$ on its factors, respectively.

\begin{definition}
\label{d:HLM}
A \emph{horizontal-like map} $f$  on $D$
is a holomorphic map whose graph $\Gamma\subset D\times D$ satisfies the following properties:
\begin{enumerate}
    \item $\Gamma$ is a submanifold of $D\times D$ of pure dimension $k$ (not necessarily connected);
    \item $(\pi_1)_{|\Gamma}$ is injective;
    \item $(\pi_2)_{|\Gamma}$ has finite fibers;
    \item $\overline \Gamma$ does not intersect ${\partial_v D} \times \overline D$ and
    $\overline D \times  {\partial_h D}$.
 
\end{enumerate}

\end{definition}

Observe in particular that $f$ 
 does not need to be defined on the whole 
$D$, but only on the
vertical open subset
$\pi_1 (\Gamma)$ of $D$.
Likewise, the image of $f$
is the horizontal subset
$\pi_2 (\Gamma)$
of $D$. We will write 
$D_{v,1}= \pi_1 (\Gamma)$
and $D_{h,1}= \pi_2 (\Gamma)$
in the
following.
More generally, for all $n\geq 1$
we 
consider the iterate $f^n = f\circ \dots \circ f$ ($n$ times). These are
also horizontal-like maps.
We denote by $D_{v,n}$
and $D_{h,n}$
the domain and the image of $f^n$. Observe that the sequences
$(D_{v,n})_{n\geq 1}$
and $(D_{h,n})_{n\geq 1}$ are decreasing.

\begin{remark}\label{r:PL}
 In the case where $p=k$,
as $N$ is a point,
Definition \ref{d:HLM} amounts to
simply
considering proper
holomorphic maps $f\colon M_0 \to M$, where $M_0:= f^{-1} (M)$
is open, $M_0\Subset M$,
and $M\Subset \C^k$ is a convex open set
(we set in this case $\partial N=\varnothing$ by convention, to keep the condition (iv) consistent).
These are the so-called \textit{polynomial-like maps}, see for instance \cite{DSallure}.
A polynomial-like map $f\colon M_0 \to M$ defines a ramified covering over  $M$. We usually call the degree $d_t$ of the covering 
the \emph{topological degree} of $f$.
\end{remark}

\medskip

The \emph{filled (forward) Julia set}  $\mathcal{K}_+$ 
of $f$
is defined as
\[
\mathcal{K}_+ := \{ z \in D \colon 
f^n(z) \mbox{ is defined for all } n\geq 0
\}.
\]
Note that $\mathcal{K}_+=\cap_{n\ge 1} D_{v,n}$.
Define also 
$\mathcal{K}_-:=\cap_{n\ge 1} D_{h,n}$.
Both $\mathcal K_+$ and $\mathcal K_-$ are non-empty, and
we have
$f^{-1}(\mathcal{K}_+)
= \mathcal{K}_+$
 and
$f(\mathcal{K}_-)=\mathcal{K}_-$.
Observe that $\mathcal{K}_+$ (resp. $\mathcal{K}_-$)
is a vertical (resp. horizontal)
closed
subsets of $D$.
In particular, $\mathcal{K}_+\cap \mathcal{K}_-$
is a compact subset of $D$.

\medskip
 We call any invertible horizontal-like map (i.e.,  any horizontal-like map such that $(\pi_2)_{|\Gamma}$ is also injective)   a \textit{H\'enon-like map}. 
In the rest of this section, $f$  will be either a H\'enon-like map or a polynomial-like map in the sense of Remark \ref{r:PL}.
 The sets $M',M'', N',N''$ will be as in the Notation at the beginning of the paper.  We will assume in what follows that
$M'$ and $M''$ (resp. $N'$ and $N''$) are chosen
sufficiently close
to $M$ (resp. $N$) so that
\begin{equation}\label{e:hp-inclusion}
D_{v,1} \subset M'' \times N
\quad \mbox{ and } \quad
D_{h,1} \subset M \times N''. 
\end{equation}
In particular, 
we can replace $D$ with $D'$ or
$D''$ and still get
horizontal-like maps. We denote by
$D'_{v,n}$, $D'_{h,n}$, $D''_{v,n}$, and
$D''_{h,n}$ the domains and images of the restrictions
of $f^n$ to $D'$ and $D''$, respectively. Note that, 
when $f$ is a polynomial-like map,
$N, N'$, and $N''$
consist of a single point and
we have $D_{v,n}=f^{-n}(M)$ and
$D_{h,n}=M$ (and, similarly, $D'_{v,n}=f^{-n}(M')$, $D'_{h,n}=M'$, $D''_{v,n}=f^{-n}(M'')$,  and $ D''_{h,n}=M''$). In this case, condition \eqref{e:hp-inclusion} means that $f^{-1}(M)\subset M''$.

\medskip

The following lemma gives some basic compactness estimates that follow from the inclusions
\eqref{e:hp-inclusion}
and that we will need in the sequel.
Recall that the operators $f_*$ and $f^*$ are defined as
\[
f_* := ((\pi_2)_{|\Gamma})_* \circ ((\pi_1)_{|\Gamma})^* \quad \mbox{ and }
\quad 
f^* = ((\pi_1)_{|\Gamma})_* \circ ((\pi_2)_{|\Gamma})^*.
\]
The operator  $f_*$
is continuous on horizontal currents, and
the operator $f^*$
is continuous on vertical currents. 
Recall that we only consider  H\'enon-like maps and polynomial-like maps.

\begin{lemma}\label{l:collection-shift}
Let $M,M',M'',N,N',N''$, and $f$ 
be as above.
Then, there exists a constant $A$ (depending on the domains  and $f$) such that 
   \begin{enumerate}

     \item[{\rm (i)}]\label{eq:shift1}  
   $(f^{n})^*(\omega^s)\le A (f^{n-1})^*(\omega^s)$ on $D'_{v, n}$
    for all $0\leq s \leq k$
    and $n \geq 1$;

  \item[{\rm (ii)}]\label{eq:ineqAS} $(f_* (S))_{|D'} \in \mathcal H_s(D') 
   \mbox{ and } \|f_* (S)\|_{M'\times N}\leq  A \| S\|_{M''\times N}$ 
 for all $0\leq s \leq p$ and $S \in \mathcal{H}_s(D')$.
   \end{enumerate}
 
\end{lemma}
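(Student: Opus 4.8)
The statement is a pair of compactness/continuity estimates for the pushforward under $f^n$, and both parts should follow from the geometric inclusions \eqref{e:hp-inclusion} together with the fact that, over the relevant subdomains, $f^n$ (or rather the correspondence $\Gamma_n$ associated with $f^n$) is a proper holomorphic map with bounded geometry. I would first set up notation: write $\Gamma_n \subset D\times D$ for the graph of $f^n$, with projections $\pi_1,\pi_2$, so that $f^n_* = (\pi_2)_* (\pi_1)^*$ and $f^{n*} = (\pi_1)_* (\pi_2)^*$. The key point extracted from \eqref{e:hp-inclusion}, applied also to the iterates, is that $\overline{D_{v,n}} \subset M''\times N$ and $\overline{D_{h,n}}\subset M\times N''$; in particular $\overline{D'_{v,n}}$ is a compact subset of $M\times N$ staying away from $\partial_v D$, and similarly on the image side. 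This is what makes all the pushforward/pullback operations well-defined on the slightly smaller domains and gives the uniform bounds.

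\textbf{Part (i).} For the inequality $(f^n)^*(\omega^s)\le A (f^{n-1})^*(\omega^s)$ on $D'_{v,n}$, I would factor $f^n = f^{n-1}\circ f$ and write $(f^n)^* \omega^s = f^*\big((f^{n-1})^*\omega^s\big)$ on $D'_{v,n}$. By \eqref{e:hp-inclusion} the map $f$ sends $D'_{v,n}$ into $D_{h,1}\subset M\times N''$, which is relatively compact in $D$; hence on this region $f$ extends holomorphically to a neighbourhood with bounded derivatives, so $f^*$ multiplies the mass of a positive $(s,s)$-form by a factor controlled by the Jacobian of $f$, which is bounded on the relatively compact set $\overline{D'_{v,n}}$ by a constant independent of $n$ (the bound depends only on $\sup$ of $\|Df\|$ over $\overline{D_{v,1}}$, a fixed compact set). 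Concretely, since $f^*\omega \le A_0\, \omega$ pointwise on $D'_{v,1}$ for a constant $A_0$ depending only on $f$ and the domains, and $(f^{n-1})^*\omega^s$ is a positive form, wedging with $(f^*\omega)^?$ —more carefully, using $f^*(\alpha) \le A\,\alpha$ for positive $(s,s)$-forms $\alpha$ whenever $f^*\omega \le A^{1/s}\omega$ on the relevant set— gives the claim. The constant $A$ can be taken uniform in $n$ because all the $D'_{v,n}$ lie inside the single fixed compact set $\overline{D_{v,1}}\subset M''\times N$.

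\textbf{Part (ii).} For $S\in\mathcal H_s(D')$, I would argue as follows. First, $f_*(S) = (\pi_2)_*(\pi_1)^*(S)$ is well-defined because, by condition (iv) of Definition \ref{d:HLM}, $\overline\Gamma$ avoids $\partial_v D\times\overline D$ and $\overline D\times\partial_h D$, so $(\pi_1)_{|\Gamma}$ is proper over the vertical part and $(\pi_2)_{|\Gamma}$ is proper over the horizontal part; combined with the horizontality of $\supp S$ (i.e. $\pi_N(\supp S)\Subset N$) this makes the slicing $(\pi_1)^*S$ and the subsequent pushforward legitimate, and yields a positive closed current on $D'$. That $f_*(S)$ is again horizontal, i.e. $(f_*S)_{|D'}\in\mathcal H_s(D')$, follows since $\supp f_*(S)\subset \pi_2\big((\pi_1)^{-1}(\supp S)\cap\Gamma\big)\subset D_{h,1}\subset M\times N''$, whose projection to $N$ is relatively compact. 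For the mass estimate, I would test against a fixed positive closed horizontal form: $\|f_*(S)\|_{M'\times N} = \int_{M'\times N} f_*(S)\wedge\omega^{k-s} \le \int f_*(S)\wedge \Omega$ where $\Omega$ is a smooth positive closed $(k-s,k-s)$-form dominating $\omega^{k-s}$ on the compact set $\overline{M'}\times \pi_N(D_{h,1})$ (such $\Omega$ exists by Lemma \ref{l:general-smooth-bound-compact}); then by the pushforward formula $\int f_*(S)\wedge\Omega = \int S\wedge f^*\Omega$, and $f^*\Omega$ is a positive closed form on $D''_{v,1}\subset M''\times N$ with $f^*\Omega \le A\,\omega^{k-s}$ there (by boundedness of $f$ on the fixed compact set $\overline{D_{v,1}}$, as in Part (i)). Since $\supp S\subset M''\times N$ by \eqref{e:hp-inclusion} (as $S$ lives on $D'$ and is horizontal, pushed forward it... — more carefully: one uses that the computation only sees $S$ on $D''$, where the cutoffs live), we get $\int S\wedge f^*\Omega \le A\|S\|_{M''\times N}$, which is the desired inequality.

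\textbf{Main obstacle.} The routine part is the derivative/Jacobian bounds; the delicate point is bookkeeping near the boundary — ensuring that all the currents and forms are compared on the \emph{correct} nested subdomains so that no mass is lost or spuriously gained at $\partial D$, and that the various "proper over this region" statements genuinely follow from Definition \ref{d:HLM}(iv) and \eqref{e:hp-inclusion}. In particular one must be careful that $f^*\Omega$, defined a priori only where $f$ makes sense, is dominated by $\omega^{k-s}$ precisely on a neighbourhood of $\supp S$, and that the test form $\Omega$ can be chosen once and for all (independent of $S$ and $n$), which is exactly what Lemma \ref{l:general-smooth-bound-compact} provides. Making the constant $A$ manifestly independent of $n$ requires observing that every domain appearing ($D'_{v,n}$, $D''_{v,1}$, etc.) sits inside the single fixed compact set $\overline{D_{v,1}}$ or $\overline{D_{h,1}}$, so one genuinely only estimates $f$ itself on a fixed compact set.
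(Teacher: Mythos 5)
Your overall strategy (exploit \eqref{e:hp-inclusion} to reduce everything to bounds for $f$ on a fixed compact set, then push--pull for the mass estimate) is the paper's strategy, but the key step of part (i) as you wrote it fails. You factor $f^n=f^{n-1}\circ f$, write $(f^n)^*\omega^s=f^*\big((f^{n-1})^*\omega^s\big)$, and invoke the principle that $f^*(\alpha)\le A\,\alpha$ for every positive $(s,s)$-form $\alpha$ once $f^*\omega\le A^{1/s}\omega$. That principle is false: at a point $x$, $f^*\alpha(x)$ is built from $\alpha(f(x))$ and $Df(x)$, whereas the right-hand side uses $\alpha(x)$; a bound on $\|Df\|$ gives no comparison between $\alpha(f(x))$ and $\alpha(x)$. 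For $\alpha=(f^{n-1})^*\omega^s$ these two values typically differ by factors growing with $n$ (this is precisely the dynamical growth being measured), so no constant independent of $n$ can come out of this route. The statement is correct, but one must compose in the other order, which is exactly the paper's one-line proof: set $A:=\max_{0\le s\le k}\|f^*(\omega^s)\|_{\mathcal C^1(D'_{v,1})}$, so that $f^*(\omega^s)\le A\,\omega^s$ as forms on $D'_{v,1}$; since $f^{n-1}(D'_{v,n})\subset D'_{v,1}$ and pullback by the holomorphic map $f^{n-1}$ preserves inequalities between positive forms (both sides now evaluated at the same point), one gets $(f^n)^*\omega^s=(f^{n-1})^*\big(f^*\omega^s\big)\le A\,(f^{n-1})^*\omega^s$ on $D'_{v,n}$.

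Part (ii) has the right architecture and coincides with the paper's proof (well-definedness and horizontality of $(f_*S)_{|D'}$ from Definition \ref{d:HLM}(iv) and \eqref{e:hp-inclusion}; then $\int f_*(S)\wedge(\hbox{test})=\int S\wedge f^*(\hbox{test})$ with the localization $f^{-1}(M'\times N)\subset D_{v,1}\subset M''\times N$), but the bookkeeping needs repair. Since $S$ has bi-dimension $(s,s)$, its mass is computed against $\omega^{s}$, not $\omega^{k-s}$; the auxiliary dominating form $\Omega$ is unnecessary (the paper tests directly against $\omega^s$ and uses part (i) with $n=1$ to get $f^*\omega^s\le A\,\omega^s$ on $D'_{v,1}$), and if you insist on it, a horizontal form of the correct bi-degree $(s,s)$ requires $l=k-s\le p$ in Lemma \ref{l:general-smooth-bound-compact}, which fails for $s<k-p$. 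Finally, the claim ``$\supp S\subset M''\times N$'' is not what \eqref{e:hp-inclusion} gives ($\supp S$ is only contained in $M'\times N'$); the correct point is that after the change of variables the integration is over $f^{-1}(M'\times N)\subset M''\times N$, which is why only $\|S\|_{M''\times N}$ appears on the right-hand side.
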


\begin{proof}
Set 
${A}:=\max\limits_{0\le s\le k}\|f^*(\omega^s)\|_{\mathcal{C}^1(D'_{v,1})}$. Then  
we have
$f^*(\omega^s)\le {A} \omega^s$
on  $D'_{v,1}$
for  all
$0\le s\le k$. The first assertion follows  
 from the inclusion
$D'_{v,n}\subset D'_{v,1}$. 

Take now $S\in\mathcal{H}_s(D')$, for some $0\le s\le p$. 
 Using  \eqref{e:hp-inclusion}, we have
 \begin{align*}
     \|f_* (S)\|_{M'\times N}&=\int_{M'\times N}f_* (S)\wedge\omega^s\le\int_{M'\times N}f_*\left( S\wedge f^*(\omega^s)\right)  \\
     &=\int_{f^{-1}(M'\times N)} S\wedge f^*(\omega^s)\le\int_{M''\times N} S\wedge f^*(\omega^s)\\
     &=\int_{M''\times N'} S\wedge f^*(\omega^s)\le {A}\| S\|_{M''\times N}.
 \end{align*}
In the last inequality, 
we used the fact that $S\in\mathcal{H}_s(D')$ and the first assertion with $n=1$. Since $f$ is  horizontal-like, $(f_*(S))_{|D'}$ is a horizontal current on $D'$. By the above inequality its mass is finite and hence $(f_*(S))_{|D'}\in \mathcal{H}_s(D')$. This completes the proof.
 \end{proof}

We now define
quantities to measure the growth of the mass of the currents $(f^n)_* (S)$
(for $S$ horizontal and of dimension up to $p$) and $(f^n)^*(R)$
(for $R$ vertical and of dimension up to $k-p$). The case of
dimension $p$ and $k-p$, respectively, is given by  Lemma \ref{l:degree-p} below.
Recall that, by \cite[Theorem 2.1]{DS1} 
for any $S \in \mathcal H_p(D)$,
the \emph{slice measure}
 $\langle S,\pi_{M'}, z\rangle$
is well-defined for any $z\in M'$ and its mass, denoted by $\|S\|_h$, is independent of $z$
and is equal to
$\langle S, \pi_{M}^* (\Omega_M)\rangle$
for every smooth probability measure $\Omega_M$
with compact support in $M'$.
Similarly, 
if $R\in\mathcal V_{k-p}(D)$ 
the 
slice measure
$\langle R,\pi_{N'}, w\rangle$
is well-defined for any $w\in N'$ and its mass, denoted by $\|R\|_v$ is independent of $w$ and equal to $\langle R, \pi_N^*(\Omega'_N)\rangle$
for every smooth probability measure $\Omega'_N$
with compact support in $N'$.

\begin{lemma}
  If  $S \in \mathcal H_p(D)$ is supported in $M\times N'$, then
$\|S\|_{D'}\lesssim \|S\|_{h}\lesssim \|S\|_{D'}$.  Similarly, if $R\in\mathcal V_{k-p}(D)$ is supported in $M'\times N$ then $\|R\|_{D'}\lesssim \|R\|_{v}\lesssim \|R\|_{D'}$.
\end{lemma}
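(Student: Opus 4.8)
The plan is to prove both comparisons by testing $S$ (resp.\ $R$) against suitable forms and using the definitions of $\|S\|_h$ and $\|S\|_{D'}$ recalled just above. I focus on the first statement, since the second follows by the symmetric argument exchanging the roles of the $M$ and $N$ factors. Recall $\|S\|_{D'}=\int_{M'\times N'} S\wedge\omega^{k-p}$, that $S$ is supported in $M\times N'$, and that $\|S\|_h=\langle S,\pi_M^*(\Omega_M)\rangle$ for any smooth probability measure $\Omega_M$ with compact support in $M'$.

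For the bound $\|S\|_h\lesssim\|S\|_{D'}$, I would pick a fixed smooth probability measure $\Omega_M$ with $\supp\Omega_M\Subset M'$, and compare $\pi_M^*(\Omega_M)$ with $\omega^{k-p}$ on $M'\times N'$. Since $S$ is supported in $M\times N'$, only the behaviour over $M'\times N'$ matters; there $\pi_M^*(\Omega_M)$ is a smooth positive $(p,p)$-form supported in $M'\times N'$, while $\omega^{k-p}$ is strictly positive, so $\pi_M^*(\Omega_M)\le c\,\omega^{k-p}$ on $M'\times N'$ for a constant $c$ depending only on the domains. Pairing with the positive current $S$ gives $\|S\|_h=\langle S,\pi_M^*(\Omega_M)\rangle\le c\,\|S\|_{D'}$. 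For the reverse bound $\|S\|_{D'}\lesssim\|S\|_h$, I would instead dominate $\omega^{k-p}$ on $M'\times N'$ by a form of the shape $\pi_M^*(\Omega_M')\wedge(\text{positive vertical factor})$, or more directly invoke Lemma~\ref{l:general-smooth-bound-compact}: choose a smooth horizontal positive closed $(k-p,k-p)$-form $\Omega_K$ on $M\times N$ that is strictly positive on a compact neighbourhood of $\overline{M'}\times\overline{N'}$; then $\omega^{k-p}\le c'\Omega_K$ on $M'\times N'$, and since $\Omega_K$ is horizontal and closed and $S\in\mathcal H_p$, the pairing $\langle S,\Omega_K\rangle$ is finite and, because $S$ has dimension $p$ with $\Omega_K$ closed of complementary degree, it equals (up to a constant depending only on the cohomological normalization of $\Omega_K$ over the $M'$-slices) a multiple of $\|S\|_h$. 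Hence $\|S\|_{D'}\le c'\langle S,\Omega_K\rangle\lesssim\|S\|_h$.

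The main obstacle is the second inequality: one must be careful that $\langle S,\Omega_K\rangle$ is genuinely controlled by $\|S\|_h$ and not merely finite. The cleanest way is to arrange $\Omega_K$ (again via Lemma~\ref{l:general-smooth-bound-compact}, built from averages of $[\Pi_z]$ with $\Pi_z$ an $(k-p)$-plane of the form $\{z'\}\times\mathbb C^{k-p}$-type slices adapted to the horizontal geometry) so that $\Omega_K = \pi_M^*(\widetilde\Omega_M)$ on $M'\times N'$ for some smooth probability measure $\widetilde\Omega_M$ on $M'$, up to a term supported away from $M'\times N'$ which $S$ does not see; then $\langle S,\Omega_K\rangle=\langle S,\pi_M^*(\widetilde\Omega_M)\rangle=\|S\|_h$ exactly, by the slicing characterization of $\|S\|_h$. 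Once this identification is in place, the domination $\omega^{k-p}\le c'\Omega_K$ on the compact set $\overline{D'}$ (valid since $\Omega_K$ is strictly positive there) gives $\|S\|_{D'}\le c'\|S\|_h$, completing the proof; the statement for $R\in\mathcal V_{k-p}(D)$ supported in $M'\times N$ is obtained verbatim after swapping horizontal and vertical, $M\leftrightarrow N$, and $\pi_M\leftrightarrow\pi_N$.
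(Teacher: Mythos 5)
Your first inequality ($\|S\|_h\lesssim\|S\|_{D'}$) is essentially fine up to a degree slip: since $S\in\mathcal H_p(D)$ has bi-dimension $(p,p)$, its mass is $\|S\|_{D'}=\int_{D'}S\wedge\omega^{p}$, not $\int S\wedge\omega^{k-p}$, and the comparison you need is $\pi_M^*(\Omega_M)\le c\,\omega^{p}$ on the relevant compact set. With that correction the argument works, and it is a legitimate variant of the paper's proof, which instead pushes $S_{|D'}$ forward by $\pi_{M'}$ (so that $(\pi_{M'})_*(S_{|D'})=\|S\|_h\,[M']$) and compares masses.

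The second inequality is where the real content lies, and there your argument has a genuine gap. A form that coincides with $\pi_M^*(\widetilde\Omega_M)$ on $M'\times N'$ can never satisfy $\omega^{p}\le c'\Omega_K$ there: a pullback under the straight projection $\pi_M$ annihilates all vertical tangent directions, hence is nowhere strictly positive as a $(p,p)$-form when $p<k$, whereas $\omega^p=(\omega_M+\omega_N)^p$ has strictly positive mixed and vertical components. So your ``cleanest way'' is self-contradictory, and the fallback via Lemma \ref{l:general-smooth-bound-compact} does not repair it: that lemma produces horizontal forms of bi-degree $(m-l,m-l)$ with $l\le p$, built from $l$-dimensional \emph{horizontal} planes, so to reach bi-degree $(p,p)$ you would need $l=k-p$, which is not allowed when $k>2p$; and in any case a form averaged from horizontal planes has no a priori reason to have its pairing with the horizontal current $S$ controlled by $\|S\|_h$ --- that control is exactly what the lemma under proof is asserting. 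Once you allow the dominating form to charge vertical and tilted directions (which you must, in order to dominate $\omega^p$), the identification of $\langle S,\Omega_K\rangle$ with a multiple of $\|S\|_h$ becomes the nontrivial step, and there is no cohomological normalization available on the non-compact domain $D$ to justify it. The paper resolves precisely this point by bounding $\omega^p$ on $D'$ by a finite sum $\sum_i c_i\,\pi_{M,A_i}^*(\omega_M^p)$ for tilted projections $\pi_{M,A}(z,w)=z+Aw$ with small $A$, and by using the fact (from \cite[Lemma 3.3.3]{DSallure}, see also \cite[Lemma 2.5]{DS10}) that the slice mass of $S$ with respect to $\pi_{M,A}$ is independent of $A$ and equal to $\|S\|_h$; that invariance is exactly the statement your proposal assumes without proof.
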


\begin{proof}
It is enough to prove
the assertion for $S\in \mathcal H_p(D)$. The inequality $\|S\|_h \lesssim \|S\|_{D'}$ is a consequence of the fact that $(\pi_{M'})_* (S_{|D'}) = \|S\|_h \cdot [M']$, which gives $\|S\|_{D'}\geq \|(\pi_{M'})_* (S_{|D'})\| \gtrsim \|S\|_h \cdot |M'|$, where
$|M'|$ denotes the Lebesgue measure of $M'$.
The proof of the other inequality is essentially given in \cite[Lemma 3.3.3]{DSallure}, see also \cite[Lemma 2.5]{DS10}. We recall here the idea of the proof.

Let $\omega_M$ and $\omega_N$ denote the standard K\"ahler forms on $M$ and $N$, respectively. Then, $\omega = \omega_M+\omega_N$ is the standard K\"ahler form on $D$.
For any $[p\times (k-p)]$-matrix $A$, consider the projection $\pi_{M,A}\colon (\C^p\times \C^{k-p})\to \C^p$ defined as $\pi_{M,A} (z,w)= z + Aw$.
If the entries of $A$ are sufficiently small
(depending on $M,M'$, and $N$), 
$\pi_{M,A}$ is well-defined on $D'$, with image in $M$. By the first part of the proof, the slice
$\langle S, \pi_{M,A}, z\rangle$ is then  well-defined for every $z \in M'$.
Its mass is independent of $A$, and in particular equal to $\|S\|_h$.
Hence, the integral $\int_{D'} S \wedge \pi^*_{M,A} (\omega_M^p)$ is well-defined and bounded by a constant times $\|S\|_h \cdot |M|$ for every $A$.
To conclude, it is enough to observe that $\omega^p$ can be bounded on $D'$ by a finite sum of 
forms 
$c_i \cdot \pi_{M,A_i}^* (\omega_{M}^p)$, where the entries of $A_i$ can be taken small as above, and the $c_i$'s are positive.
As a consequence, we have
\[
\|S\|_{D'}=
\int_{D'} S \wedge \omega^p \leq \sum_i c_i \int_{D'} S \wedge \pi^*_{M,A_i} (\omega_M^p)
\lesssim \|S\|_h.
\]
The assertion follows.
\end{proof}

\begin{lemma}[{\cite[Proposition 4.2]{DS1}}]\label{l:degree-p}
The operators $f_*\colon \mathcal H_p(D')\to \mathcal H_p(D')$ 
and
$f^* \colon \mathcal V_{k-p}(D')\to \mathcal V_{k-p}(D')$
are well-defined and continuous. Moreover, there exists an integer $d\geq 1$ such that
$\|f_* (S)\|_h = d \|S\|_h$ for
all $S \in \mathcal H_p$ and
$\|f^* (R)\|_v= d \|R\|_v$
for all $R \in \mathcal V_{k-p}(D')$.
\end{lemma}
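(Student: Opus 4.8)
The plan is to reduce the statement to the corresponding facts for horizontal/vertical positive closed currents of extremal bidimension established in \cite{DS1}, using the inclusions \eqref{e:hp-inclusion} as the only new input beyond that reference. First I would check that $f_*$ maps $\mathcal H_p(D')$ into itself. Given $S \in \mathcal H_p(D')$, the current $f_*(S)$ is positive closed of bidimension $(p,p)$ on $D'_{h,1}$; by \eqref{e:hp-inclusion} its support, which is contained in $D'_{h,1}$, is horizontal in $D'$, so $(f_*(S))_{|D'}$ is horizontal, and its finiteness of mass is exactly item (ii) of Lemma \ref{l:collection-shift} (with $s=p$). An entirely symmetric argument, applied to $f^{-1}$ for a H\'enon-like map and using the vertical half of \eqref{e:hp-inclusion}, handles $f^*$ on $\mathcal V_{k-p}(D')$; for a polynomial-like map $\mathcal V_{k-p}(D')$ reduces to multiples of the current of integration on $D'$ (since $N''=N'=N$ is a point), and the statement is the familiar one that $f^*$ of the fundamental class has slice mass $d_t$ times the original, where $d_t$ is the topological degree.

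Next I would address continuity. On $\mathcal H_p(D')$ the mass is, up to comparable constants, the slice mass $\|\cdot\|_h = \langle \cdot, \pi_M^*(\Omega_M)\rangle$ for a fixed smooth probability measure $\Omega_M$ compactly supported in $M'$, as recalled just above the statement; so $\|\cdot\|_h$ is weakly continuous on bounded subsets, and the push-forward $f_* = ((\pi_2)_{|\Gamma})_* \circ ((\pi_1)_{|\Gamma})^*$ is a composition of a pull-back by the holomorphic submersion $(\pi_1)_{|\Gamma}$ (continuous on currents) with a proper push-forward (continuous because, by \eqref{e:hp-inclusion}, $(\pi_2)_{|\Gamma}$ is proper over $D'$), hence continuous. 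The same reasoning, with the roles of the two projections exchanged and the semi-distance on $\mathcal V_{k-p}(D')$ in place of that on $\mathcal H_p(D')$, gives continuity of $f^*$.

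The substantive point is the existence of the integer $d \geq 1$ with $\|f_*(S)\|_h = d\|S\|_h$ and $\|f^*(R)\|_v = d\|R\|_v$, with the \emph{same} $d$ on both sides. The equality $\|f_*(S)\|_h = d\|S\|_h$ for a single well-chosen $d$ follows from the structure of slicing: for $z$ generic in $M'$, the slice $\langle f_*(S), \pi_{M'}, z\rangle$ is obtained by pushing forward the slices of $S$ over the fiber $\pi_1^{-1}$ of the point corresponding to $z$ under the dynamics, and the finiteness of the fibers of $(\pi_2)_{|\Gamma}$ (Definition \ref{d:HLM}(iii)) together with $(\pi_1)_{|\Gamma}$ injective forces the ratio of masses to be a fixed positive integer $d$, independent of $S$ and of $z$ (this is where I would cite \cite[Theorem 2.1, Proposition 4.2]{DS1}). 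For the coincidence with the $d$ appearing in $\|f^*(R)\|_v = d\|R\|_v$, the clean argument is a duality/intersection computation: for $S \in \mathcal H_p(D')$ and $R \in \mathcal V_{k-p}(D')$ with compatible supports, the pairing $\langle S \wedge R\rangle$ is finite and satisfies $\langle f_*(S) \wedge R\rangle = \langle S \wedge f^*(R)\rangle$ by the projection formula on the graph $\Gamma$; choosing $S, R$ so that both pairings are nonzero and using the two scaling relations forces the two constants to agree. The main obstacle I anticipate is precisely this last step — making the intersection $S \wedge R$ and the projection formula rigorous in the non-compact, local setting, controlling the behavior near $\partial D'$ — but this is exactly what \eqref{e:hp-inclusion} is designed to handle, since it pushes all the relevant supports strictly inside $D$, and in any case the entire lemma is quoted from \cite[Proposition 4.2]{DS1}, so it suffices to indicate that the hypotheses there are met.
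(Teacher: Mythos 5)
The paper gives no proof of this lemma: it is quoted directly from \cite[Proposition 4.2]{DS1}, and your sketch (horizontality plus the mass bound of Lemma \ref{l:collection-shift} for well-definedness, continuity of $f_*$ on horizontal currents, slice-mass multiplicativity, and the identification of the two constants via the intersection pairing $\langle S\wedge R\rangle=\|S\|_h\,\|R\|_v$ and the projection formula on $\Gamma$) follows the same route as that reference, so it is consistent with the paper's treatment. One minor slip worth fixing: when $p=k$ it is $\mathcal H_k(D')$ that reduces to constants (equivalently, multiples of the integration current on $D'$), whereas $\mathcal V_{0}(D')$ consists of compactly supported positive measures, on which $f^*$ acts by summing over preimages and thus multiplies mass by $d_t$; this does not affect the substance of your argument.
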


We call the integer $d$ as in the last statement the \emph{main dynamical degree} of $f$. Note that when $f$ is a polynomial-like map (see Remark \ref{r:PL}), since $\mathcal{H}_k(D')$ is the set of constant functions, the main dynamical degree is the topological degree $d_t$ of $f$.

\medskip

We now introduce 
some invariants in order to measure the growth of currents of arbitrary dimension and degree, as above. The first is an adaptation of the (smooth) dynamical degrees for polynomial-like maps
introduced in \cite{DSallure}.

\begin{definition}[Dynamical degrees of type I]
\label{d:degrees-smooth}
Let $M,M',N,N'$, and $f$ be as above.
For $0 \leq s \leq p$, we define the dynamical degree $\lam^+_s$ as
$$\lambda_s^+ :=
\limsup_{n\to\infty} (\lambda_{s,n}^+)^{1/n}, \quad \text{ where} \quad \lambda_{s,n}^+:= 
\|(f^n)_*(\omega^{k-s}_{|D'_{v,n}})\|.
$$
For $0 \leq s \leq k-p$, we define the dynamical degree $\lam^-_s$ as
\[
\lam_{s}^-:= \limsup_{n\to\infty} (\lambda_{s,n}^-)^{1/n}, \quad \text{ where}
\quad \lam_{s,n}^- := \|(f^n)^*(\omega^{k-s}_{|D'_{h,n}})\|
.\]
\end{definition}

The mass in the definition of $\lam^{+}_{s,n}$
(resp.  $\lam^{-}_{s,n}$)
is taken on 
$M'\times N$
(resp. $M\times N'$), 
or equivalently on 
the image $D'_{h,n}$ (resp. the domain
$D'_{v,n}$) of $f^n$.

\begin{remark}\label{r:lambda-f-1}
Note that for polynomial-like maps we only consider $\lambda^+_s$ with $0\le s\le k$. 
For 
H\'enon-like maps,  we can see that $\lam_s^+ (f) = \lam_{s}^-(f^{-1})$ for all $0\leq s \leq p$
and $\lam_{s}^- (f) = \lam_{s}^+ (f^{-1})$ for all
$0\leq s\leq k-p$,
where the degrees of $f^{-1}$ can be defined 
reversing the role of $f_*$ and $f^*$ and using the fact that $f^{-1}$ is a vertical-like map with $k-p$ expanding directions, see also \cite[Section 3]{DNS}.
\end{remark}

\begin{lemma}\label{l:indep-deg-smooth}
 The definitions of $\lam^+_s$
and $\lam_s^-$
are independent of the choice of $M'$ and $N'$ as above. Moreover, we
have $\lam_{p}^+= \lam_{k-p}^-=d,$ the main dynamical degree.

\end{lemma}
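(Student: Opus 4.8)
The plan is to prove two assertions: the independence of $\lambda^+_s$ (and $\lambda^-_s$) of the choice of $M', N'$, and the identity $\lambda^+_p = \lambda^-_{k-p} = d$.

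\medskip

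\textbf{Independence of the domains.} First I would fix two admissible choices $D' = M'\times N'$ and $\tilde D' = \tilde M' \times \tilde N'$, both satisfying \eqref{e:hp-inclusion}. Since any two such are both squeezed between $D''$-type sets and $D$, it suffices to compare a generic choice with a slightly smaller one; equivalently, to show that enlarging $M'$ and $N'$ inside $M$ and $N$ does not change the $\limsup$ of the $n$-th roots. The mass $\lambda^+_{s,n} = \|(f^n)_*(\omega^{k-s}_{|D'_{v,n}})\|$ is taken on $M'\times N$, which is the same for all choices of $N'$ (the mass is on $M'\times N$, not $M'\times N'$), so only the dependence on $M'$ is genuine. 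For this I would cover $\overline{M'}$ by finitely many balls and use that $\omega^{k-s}$ restricted to $D'_{v,n}$ is, up to a fixed multiplicative constant, comparable (as a smooth form) to $\omega^{k-s}$ restricted to $\tilde D'_{v,n}$ on the overlap, together with the compactness estimate in Lemma~\ref{l:collection-shift}(i) and the sub-multiplicativity that will be established for these sequences (or, if that comes later, a direct estimate): a change of the domain $M' \subset \tilde M'$ changes $(f^n)_* (\omega^{k-s})$ only by the mass contribution of a thin boundary region, and pushing this forward one extra step and applying the uniform bound $f^*(\omega^s) \le A\omega^s$ on $D'_{v,1}$ absorbs the discrepancy into a constant $C = C(M', \tilde M', N', \tilde N', f)$ independent of $n$. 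Taking $n$-th roots and letting $n\to\infty$ kills the constant, giving equality of the $\limsup$'s. The same argument applied to $f^{-1}$ (a vertical-like map) handles $\lambda^-_s$; alternatively one invokes Remark~\ref{r:lambda-f-1}.

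\medskip

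\textbf{The identity $\lambda^+_p = d$.} Here the point is that $\omega^{k-p}$ is, in the relevant bidegree, essentially the only game in town: a horizontal positive closed current of bidimension $(p,p)$ on $M''\times N$ with support in $M''\times N''$ is, up to a constant factor, dominated by a multiple of $\omega^{k-p}$ restricted to the domain (since $\omega^{k-p}_{|D'_{v,n}}$ is, as a smooth horizontal form, strictly positive on the compact support, by the reasoning in Lemma~\ref{l:general-smooth-bound-compact}), and conversely $\omega^{k-p}_{|D'_{v,n}}$ has finite mass and its restriction is a horizontal current in $\mathcal H_p$. Concretely: by Lemma~\ref{l:degree-p}, $\|f_*(S)\|_h = d\|S\|_h$ for every $S\in\mathcal H_p(D')$, hence $\|(f^n)_*(S)\|_h = d^n\|S\|_h$. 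Applying this to $S = \omega^{p}_{|D'_{v,n}}$ (which lies in $\mathcal H_p(D')$ after restriction, being horizontal, positive, closed, of finite mass) and using the preceding lemma's equivalence $\|(f^n)_*(S)\|_{D'} \asymp \|(f^n)_*(S)\|_h$ — valid because $(f^n)_*(S)$ is supported in $M\times N''$ by \eqref{e:hp-inclusion} — we get $\lambda^+_{p,n} = \|(f^n)_*(\omega^p_{|D'_{v,n}})\| \asymp d^n \|\omega^p_{|D'_{v,n}}\|_h \asymp d^n$, with constants independent of $n$. Taking $n$-th roots gives $\lambda^+_p = d$. The identity $\lambda^-_{k-p} = d$ is the mirror statement using $f^*$ on $\mathcal V_{k-p}(D')$ and the same lemma, which records $\|f^*(R)\|_v = d\|R\|_v$ with the \emph{same} integer $d$; equivalently apply the first identity to $f^{-1}$ via Remark~\ref{r:lambda-f-1}.

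\medskip

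\textbf{Main obstacle.} The delicate point is the first part: controlling the mass change under shrinking/enlarging the domain uniformly in $n$. The iterates $f^n$ degenerate as $n\to\infty$ (the domains $D'_{v,n}$ shrink towards $\mathcal K_+$), so one must be careful that the "thin boundary region" whose contribution we are discarding really is controlled by a single application of the compactness bound in Lemma~\ref{l:collection-shift} and does not accumulate. The clean way is: for $M'' \Subset M' $, write $\omega^{k-s}_{|D'_{v,n}} \le \omega^{k-s}_{|D''_{v,n}} + (\text{form supported in }(M'\setminus M'')\times N)$, push forward, and bound the second term's pushforward mass by $C$ times the mass of a one-step pushforward of $\omega^{k-s}$ on a fixed domain, hence by a constant independent of $n$; the first term gives $\lambda^+_{s,n}$ for the smaller domain. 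This yields $\lambda^+_{s,n}(M') \le \lambda^+_{s,n}(M'') + C$ and, trivially, $\lambda^+_{s,n}(M'') \le \lambda^+_{s,n}(M')$, so the $\limsup$'s of $n$-th roots coincide. I expect writing out this boundary-term estimate carefully — making sure the constant is genuinely $n$-independent — to be the only place requiring real care; everything else is a direct application of Lemmas~\ref{l:collection-shift}, \ref{l:degree-p}, and the mass-comparison lemma above.
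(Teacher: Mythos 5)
Your overall architecture (compare with a smaller domain, then sandwich $\omega^{k-p}$ and use Lemma~\ref{l:degree-p}) is the right idea, but both halves have genuine gaps as written. For the domain independence, the decomposition $\omega^{k-s}_{|D'_{v,n}} \le \omega^{k-s}_{|D''_{v,n}} + (\text{form supported in } (M'\setminus M'')\times N)$ is false: $D'_{v,n}\setminus D''_{v,n}$ is \emph{not} contained in the collar $(M'\setminus M'')\times N$ (for instance a point of $M''\times (N'\setminus N'')$ whose whole orbit stays in $D'$, or a point whose time-$n$ image only just leaves $M''$, lies in the difference but not in the collar), and $D'_{v,n}$ also depends on $N'$ through the initial restriction, so "only $M'$ matters" is not quite right either. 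More importantly, even for a correctly identified difference piece, its pushforward by $f^n$ cannot be bounded "by a one-step pushforward, hence by a constant independent of $n$": such a piece is iterated $n$ times and its mass growth is a priori comparable to $\lambda^+_{s,n}$ itself, so the additive claim $\lambda^+_{s,n}(M')\le \lambda^+_{s,n}(M'')+C$ is unjustified (and too strong). The paper's mechanism is different and multiplicative: one writes $\|(f^n)_*(\omega^{k-s}_{|D'_{v,n}})\| = \int_{f^{-1}(D'_{h,n})} (f^{n-1})_*(\omega^{k-s})\wedge f^*(\omega^s)$, uses $f^*(\omega^s)\le A\,\omega^s$ from Lemma~\ref{l:collection-shift}(i) together with the inclusions $f^{-1}(D'_{h,n})\subset D''_{h,n-1}$ and $D'_{v,n}\subset D''_{v,n-1}$ (both consequences of \eqref{e:hp-inclusion}), and obtains $\lambda^+_{s,n}\lesssim \tilde\lambda^+_{s,n-1}$ with an $n$-independent constant; taking $n$-th roots then gives the equality of the two degrees. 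That "trade one iterate" step is the missing idea.

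For $\lambda^+_p=d$, your application of Lemma~\ref{l:degree-p} to $S=\omega^{k-p}_{|D'_{v,n}}$ (note also the bidegree slip: it must be $\omega^{k-p}$, not $\omega^{p}$) is not legitimate: the restriction of $\omega^{k-p}$ to $D'_{v,n}$ is neither closed as a current on $D'$ nor horizontal --- its support is essentially the \emph{vertical} set $\overline{D'_{v,n}}$, whose projection to $N'$ is not relatively compact --- so it does not belong to $\mathcal H_p(D')$, the slice mass $\|\cdot\|_h$ is not even well defined (slice-independent) for it, and in any case there is no uniform bound $\|\omega^{k-p}_{|D'_{v,n}}\|_h\asymp 1$ since the vertical slices of $D'_{v,n}$ shrink towards slices of $\mathcal K_+$ as $n\to\infty$. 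The paper avoids this by fixing $n$-independent currents $\alpha_\pm\in\mathcal H_p(D')$ with $\|\alpha_\pm\|_h=1$, $\alpha_-\lesssim\omega^{k-p}$ on $D'$ and $\omega^{k-p}\lesssim\alpha_+$ on $D''$ (the latter from Lemma~\ref{l:general-smooth-bound-compact}); applying Lemma~\ref{l:degree-p} to $\alpha_-$ gives $\lambda^+_p\ge d$, and applying it to $\alpha_+$ after using the already-proved domain independence (to compute $\lambda^+_p$ with $D''$) gives $\lambda^+_p\le d$. You need some such sandwich by genuinely closed horizontal currents; the direct use of the cut-off form does not work.
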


\begin{proof}
For the proof in the case of polynomial-like maps we refer
 to
\cite[Lemma 2.3]{DS10}. So, we
assume
that
$f$ is a H\'enon-like map.  
 By Remark \ref{r:lambda-f-1}, it is enough to
prove the statement for $\lambda^+_s$, for $0\leq s \leq p$. 
 We fix convex open sets  $M''\Subset M'$ and $N''\Subset N'$ as above  and $0\leq s \leq p$,
and we denote by
$\tilde{\lambda}^+_{s,n}, \tilde {\lambda}^+_s$ 
the dynamical degrees 
of type I defined using
$D''_{v,n}$ 
instead of
$D'_{v,n}$
in Definition \ref{d:degrees-smooth}. 
Since
$$\|(f^n)_*(\omega^{k-s}_{|D''_{v,n}})\|
\le
\|(f^n)_*(\omega^{k-s}_{|D'_{v,n}})\|
$$
it is clear that, for all $n\geq 0$, we have
$\tilde{\lambda}_{s,n}^+\le \lambda_{s,n}^+$ for $0\leq s \leq p$, hence
$\tilde{\lambda}_s^+\le \lambda_s^+$
 for every $s$ as before.
So, we only need to prove the reverse inequality.

\medskip

Recall that we are assuming that 
$D_{h,1} \subset M\times N''$ and $D_{v,1} \subset M''\times N$,  see \eqref{e:hp-inclusion}. 
In particular, Lemma \ref{l:collection-shift}
holds and since $f^n: D'_{v,n}\to D'_{h,n}$ is bijective
we have 
\begin{equation}\label{e:domains}
    f^{-1}(D_{h,n}')=f^{ n-1}(D_{v,n}')\subset D_{h,n-1}''\cap D'_{v,1}
\end{equation}
for 
all
$n\ge 2$. Indeed, 
 for all $n\geq 2$,
since $D_{h,n}'\subset D_{h,1}'$  we have that
$f^{-1}(D_{h,n}')\subset D_{v,1}'$. Thanks to \eqref{e:hp-inclusion} 
we also have
$D_{v,n}'=f^{-n}(D')=f^{-n}(M'\times N'')\subset f^{-n+1}(M''\times N'')=D_{v,n-1}''$. So \eqref{e:domains} follows.

Using \eqref{e:domains} and the first assertion in Lemma \ref{l:collection-shift} it follows that, for 
 all
$n\geq 2$ and $0\leq s \leq k- p$, we have
\begin{align*}
    \|(f^n)_*(\omega^{k-s}_{|D'_{v,n}})\|
        & =
    \int_{D'_{h,n}} (f^n)_* (\omega^{k-s}_{|D'_{v,n}})\wedge \omega^{s}\\    
           & =
    \int_{f^{-1}(D'_{h,n})} (f^{n-1})_* (\omega^{k-s}_{|D'_{v,n}})\wedge f^*(\omega^{s})\\  
     &
    \lesssim \int_{f^{-1}(D'_{h,n})}
    (f^{n-1})_*(\omega^{k-s}_{|D'_{v,n}})\wedge \omega^{s}
           \\
     &\le \int_{D_{h,n-1}''}
    (f^{n-1})_*(\omega^{k-s}_{|D'_{v,n}})\wedge \omega^{s}
       \\
    &    \le\int_{D''_{h,n-1}}
    (f^{n-1})_*(\omega^{k-s}_{|D''_{v,n-1}})\wedge \omega^{s}= 
          \|(f^{n-1})_*(\omega^{k-s}_{|D''_{v,n-1}}) \|,
\end{align*}
where in the last inequality we used the inclusion $D'_{v,n}\subset D''_{v,n-1}$. Hence we have $\lam_{s,n}^+ \lesssim \tilde \lam_{s,n-1}^+$, which implies that $\lambda_s^+\le\tilde{\lambda}_s^+$ and the independence of $\lam_s^+$ from the domains. 

\medskip

Finally, let us
show that $\lam_{p}^+=d$. A similar proof also
shows that $\lam_{k-p}^-=d$,  see also Remark \ref{r:lambda-f-1}.
There exists $\alpha_-, \alpha_+ \in \mathcal H_{p}(D')$ 
such that
$\|\alpha_{\pm}\|_h=1$ and
\[
\alpha_- \lesssim \omega^{k-p}
\mbox{ on } D'
\quad \mbox{ and }
\quad 
\omega^{k-p}
\lesssim \alpha_+ \mbox{ on } D''
\]
(where the existence of $\alpha_+$ follows from Lemma \ref{l:general-smooth-bound-compact}). 
We apply Lemma \ref{l:degree-p} with $S=\alpha_-$ and obtain that $\lambda^+_p\ge d$. 
By the first part of the proof,
the dynamical degree of type I 
is independent of the choice of $D'$. So we can replace $D'$ with $D''$ in the definition of $\lambda^+_p$. Applying
Lemma  \ref{l:degree-p}  with $\alpha_+$ gives $\lambda^+_p\le d$. Hence,  we have $\lambda^+_p=d$.
The proof is complete.
\end{proof}

We will also consider the following 
notion 
of
dynamical degrees,
which was
introduced in \cite{DNS},
see \cite{DS10} for the earlier definition in the setting of polynomial-like maps.

\begin{definition}[Dynamical degrees of type II]
\label{d:degrees-currents}
For $0\leq s\leq p$, the dynamical degree $d_s^+$ is defined by
$$d_s^+:=\limsup_{n\to\infty} (d_{s,n}^+)^{1/n} \quad \text{ where } \quad d_{s,n}^+:=\sup_S \|(f^n)_*(S)\|_{M'\times N},$$
and $S$ runs over the set 
$\mathcal{H}_s^{(1)}(D') $ 
of all horizontal positive closed currents of bi-dimension $(s,s)$ of mass 1 on $D':=M'\times N'$.

For $ 0\leq s\leq k-p$,
the dynamical degree $d_s^-$ 
is defined by
$$d_s^-:=\limsup_{n\to\infty} (d_{s,n}^-)^{1/n} \quad \text{ where } \quad d_{s,n}^-:=\sup_R \|(f^n)^*(R)\|_{M\times N'},$$
and $R$ runs over the set 
$\mathcal{V}_{s}^{(1)}(D')$ 
of all vertical positive closed currents of bi-dimension $(s,s)$ of mass 1 on $D':=M'\times N'$.
\end{definition}

\begin{remark}\label{r:d+(f)=d-(f-1)}
Similarly as
for dynamical degrees of type I, 
for polynomial-like maps we
have $p=k$, hence we
only
 need to
consider $d_s^+ $ with $0\le s\le k$. When $f$ is a  H\'enon-like map,
 we can define the degrees of type II for the vertical-like map $f^{-1}$ and
we have
$d_s^+ (f) = d_{s}^- (f^{-1})$
for $0 \leq s \leq p$ and 
$d_s^- (f) = d_{s}^+ (f^{-1})$ for $0\leq s \leq k-p$.
\end{remark}

The following lemma is proved
in \cite[Lemma 2.6]{DS10} in the case of polynomial-like maps and in
\cite[Lemma 3.5]{DNS} 
 in the case of H\'enon-like maps.

\begin{lemma}\label{l:degreeind} 
The dynamical degrees $d_s^+$ and $d_s^-$ do not depend on the choice of $M'$ and $N'$. Moreover, we have $d_0^+ =d_0^- =1$
and $d_p^+= d^-_{{ k-p}}=d$. 
\end{lemma}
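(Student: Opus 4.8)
The plan is the following. By Remark~\ref{r:d+(f)=d-(f-1)} it is enough to treat the degrees $d_s^+$: in the H\'enon-like case the statements for $d_s^-$ follow by applying the same results to the vertical-like map $f^{-1}$ (whose main dynamical degree is again $d$ by Lemma~\ref{l:degree-p}), while in the polynomial-like case the $d_s^-$ do not occur and only $d_k^+=d_t$ remains, which is the special case $p=k$ and is treated in \cite[Lemma~2.6]{DS10} (there $\mathcal H_k(D')$ consists of the constant functions). For the independence of $d_s^+$ from the choice of $M'$ and $N'$, the argument runs parallel to that of Lemma~\ref{l:indep-deg-smooth}, with full details in \cite[Lemma~2.6]{DS10} and \cite[Lemma~3.5]{DNS}. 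The key points I would use are that, for $n\ge 1$, the current $(f^n)_*(S)$ depends only on the restriction $S_{|D'_{v,n}}$ and is supported in $D'_{h,n}\subseteq D'_{h,1}$, which by \eqref{e:hp-inclusion} is compactly contained in $M\times N''$, and that by Lemma~\ref{l:collection-shift}{\rm (ii)} one application of $f_*$ increases the mass by at most a fixed factor. Since after one push-forward the support of the current becomes compactly contained in the domain in the vertical directions, such a current can, up to a bounded multiplicative constant, be regarded as a test current on a slightly smaller domain; combining this with the inclusions of the iterated domains as in \eqref{e:domains} yields $d^+_{s,n}(D'')\lesssim d^+_{s,n-1}(D')$ and, reasoning symmetrically, $d^+_{s,n}(D')\lesssim d^+_{s,n-1}(D'')$, whence the independence after taking $n$-th roots and letting $n\to\infty$. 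I expect this independence assertion to be the main point requiring care — unlike for the smooth degrees one is here comparing suprema over infinite-dimensional families of currents on different domains, and one must control a possible loss of mass near the boundary in the expanding directions; this is precisely why we appeal to \cite{DS10,DNS}.

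Next I would establish $d^\pm_0=1$. For $s=0$ the elements of $\mathcal H_0^{(1)}(D')$ are exactly the probability measures with horizontal support in $D'$ (closedness being automatic for a measure). For such a measure $\mu$ one has $\|(f^n)_*(\mu)\|_{M'\times N}=\mu(D'_{v,n})\le 1$, hence $d^+_{0,n}\le 1$. For the lower bound, note that $\mathcal K_+\ne\varnothing$ and, by \eqref{e:hp-inclusion} together with the verticality of $\mathcal K_+$, one has $f(\mathcal K_+)\subseteq D''$; thus for $y\in\mathcal K_+\cap D'$ the Dirac mass $\delta_y$ lies in $\mathcal H_0^{(1)}(D')$ and $(f^n)_*(\delta_y)=\delta_{f^n(y)}$ with $f^n(y)\in\mathcal K_+\cap D'$, so $d^+_{0,n}=1$ for every $n$ and $d^+_0=1$. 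The equality $d^-_0=1$ follows in the same way, applied to $f^{-1}$ in the H\'enon-like case.

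Finally I would prove $d_p^+=d^-_{k-p}=d$, where $d$ is the main dynamical degree of Lemma~\ref{l:degree-p}. Fix $S\in\mathcal H_p^{(1)}(D')$ and $n\ge 1$. By Lemma~\ref{l:degree-p} (iterated), $(f^n)_*(S)\in\mathcal H_p(D')$ and $\|(f^n)_*(S)\|_h=d^n\|S\|_h$. By \eqref{e:hp-inclusion} the current $(f^n)_*(S)$ is supported in $D'_{h,n}\subseteq M'\times N''$, so its mass on $M'\times N$ equals its mass on $D'$, and by the lemma immediately preceding Lemma~\ref{l:degree-p} — comparing $\|\cdot\|_{D'}$ with the slice-mass $\|\cdot\|_h$ on $\mathcal H_p$-currents supported in $M\times N'$ — this mass is comparable, with constants independent of $n$ and $S$, to $\|(f^n)_*(S)\|_h$. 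Applying the same comparison lemma to $S$ itself, which is supported in $D'\subseteq M\times N'$ and has mass $1$ on $D'$, gives $\|S\|_h\asymp 1$. Putting these together yields $\|(f^n)_*(S)\|_{M'\times N}\asymp d^n$ uniformly in $S$ and $n$; since $\mathcal H_p^{(1)}(D')\ne\varnothing$ (for instance by Lemma~\ref{l:general-smooth-bound-compact} applied with $m=k$ and $l=p$, after normalizing the mass), we get $d^+_{p,n}\asymp d^n$ and hence $d_p^+=d$. The equality $d^-_{k-p}=d$ then follows by applying the same argument to $f^{-1}$, and in the polynomial-like case the remaining assertion is $d_k^+=d_t$, handled as above. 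Throughout, the only genuinely delicate step is the domain-independence discussed in the first paragraph; the rest is a direct consequence of Lemmas~\ref{l:degree-p}, \ref{l:general-smooth-bound-compact} and the slice-mass comparison.
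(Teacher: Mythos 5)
The paper offers no proof of this lemma: it simply quotes \cite[Lemma 2.6]{DS10} for polynomial-like maps and \cite[Lemma 3.5]{DNS} for H\'enon-like maps. Since you defer to exactly these references for the independence of $d_s^\pm$ from $M',N'$ (the part you rightly single out as delicate), and your reduction of $d_s^-$ to $d_s^+$ via $f^{-1}$ and your argument for $d_0^\pm=1$ (Dirac masses at points of $f(\mathcal K_+)\subset D''$, whose forward orbits stay in $\mathcal K_+\subset M''\times N$, together with the trivial bound $\|(f^n)_*\mu\|_{M'\times N}\le 1$ for probability measures) are correct and in the spirit of Remarks \ref{r:d+(f)=d-(f-1)} and \ref{r:dsge1}, your route is compatible with the paper's; the extra detail you supply for $d_p^+=d^-_{k-p}=d$ is also the intended argument, as confirmed by the estimate $d^n\lesssim d^+_{p,n}\lesssim d^n$ stated right after the lemma.

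One intermediate claim, however, is overstated and false as written: that $\|S\|_h\asymp 1$ uniformly for all $S\in\mathcal H_p^{(1)}(D')$, hence that $\|(f^n)_*(S)\|_{M'\times N}\asymp d^n$ uniformly in $S$. The unnumbered comparison lemma preceding Lemma~\ref{l:degree-p} assumes the current is defined on the larger product $M\times N$ with horizontal support and compares $\|\cdot\|_h$ with the mass on the relatively compact subdomain $D'$; applied to $S$ itself, which is only given on $D'$ and normalized by its mass on \emph{all} of $D'$, the direction $\|S\|_{D'}\lesssim\|S\|_h$ is not available and indeed fails: already for $p=1$, $k=2$, graphs of bounded holomorphic maps $M'\to N''$ can have arbitrarily large (finite) area concentrated near $\partial M'\times N'$ while their slice mass is $1$, so after normalization one obtains elements of $\mathcal H_1^{(1)}(D')$ with arbitrarily small $\|S\|_h$. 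What does hold uniformly is the easy direction $\|S\|_h\lesssim\|S\|_{D'}=1$, coming from $(\pi_{M''})_*S=\|S\|_h\,[M'']$; combined with Lemma~\ref{l:degree-p} and the comparison lemma applied to $(f^n)_*(S)$ --- which is defined on $M\times N$ and supported in $M\times N''$ by \eqref{e:hp-inclusion}, so that lemma does apply and $\|(f^n)_*(S)\|_{M'\times N}=\|(f^n)_*(S)\|_{D'}\lesssim\|(f^n)_*(S)\|_h=d^n\|S\|_h$ --- this gives $d^+_{p,n}\lesssim d^n$. The lower bound $d^+_{p,n}\gtrsim d^n$ must come, as you in fact indicate, from the single smooth competitor furnished by Lemma~\ref{l:general-smooth-bound-compact}, whose slice mass is bounded below by a constant depending only on the domains. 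With this correction (drop the uniform two-sided comparison and use it only one-sidedly, plus the explicit competitor) your proof of $d_p^+=d^-_{k-p}=d$ is complete; the conclusion is unaffected, but the uniform ``$\asymp$'' should not survive into a final write-up.
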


In particular, it follows from Lemma \ref{l:degree-p} that
\[d^n \lesssim d^+_{p,n} \lesssim d^n 
\quad 
\mbox{ and }
\quad 
d^n \lesssim
d^{-}_{{ k-p},n} \lesssim d^n \quad \mbox{ as }n\to\infty.\]
\begin{remark}\label{r:dsge1}
By considering a current $S$ given by the integration on a horizontal analytic subset of dimension $0\leq s\leq p$
which intersects $\mathcal K_+$
we can see that $d_s^+\geq 1$ because $(f^n)_*(S)$ satisfies the same property for all $n$, and hence these currents have mass bounded from below. 
Recall that  for polynomial-like maps we only
 have to consider $d_s^+$  with $0\le s\le k$.
When $f$ is a H\'enon-like map,
we can apply 
 the above argument for $f^{-1}$.  By Remark \ref{r:d+(f)=d-(f-1)}, we get that  $d_s^-\geq 1$ for $0\le s\le k-p$. 
\end{remark}

\begin{remark}\label{rmk:ineq-lambda-d}
Take $0\leq s \leq p$. 
By Lemma \ref{l:general-smooth-bound-compact}, 
one can bound $\omega^{k-s}_{|D''}$ from above with
a smooth horizontal positive closed $(k-s,k-s)$-form $\Omega$
on $D'$ (we used here that $s\leq p$).
Therefore, it is easy to deduce that $\lambda_s^+\leq d_s^+$
if we use $D''$ to compute $\lam_s^+$
and $D'$ to compute $d_s^+$, see Lemmas \ref{l:indep-deg-smooth} and \ref{l:degreeind}.
Similarly,
one can see that $\lam^-_s \leq d^-_s$ for all $0\leq s \leq k-p$.
\end{remark}

\begin{lemma}\label{l:submult-degrees}
The sequences $(d_{s,n}^+)_{n\in \N}$
(for $0\leq s \leq p$) 
and $(d_{s,n}^-)_{n\in \N}$ (for $0\leq s \leq k-p$)
are sub-multiplicative, i.e., we have
$d^{\pm}_{s,n+m}\le  d^{\pm}_{s,m}d^{\pm}_{s,n}$
for all $n,m\geq 0$.
In particular,
we have
$$d_s^\pm=\lim_{n\to\infty} (d_{s,n}^\pm)^{1/n}=\inf_{{ n\ge 1}} (d_{s,n}^\pm)^{1/n}.$$
\end{lemma}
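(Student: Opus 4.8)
The plan is to prove directly that the sequences $(d^+_{s,n})_n$ are sub-multiplicative; the statement about the limit then follows from Fekete's lemma, and the statements for $(d^-_{s,n})_n$ are obtained by applying the same argument to the vertical-like map $f^{-1}$ and invoking Remark \ref{r:d+(f)=d-(f-1)} (for polynomial-like maps $p=k$ and there is no $d^-$, so nothing more is needed). So I would fix $0\le s\le p$, integers $n,m\ge 1$, and a current $S\in\mathcal H_s^{(1)}(D')$.

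The heart of the argument is the identity
\[
(f^{n+m})_*(S) = (f^m)_*\big(((f^n)_*(S))_{|D'}\big)
\qquad \text{on } M'\times N.
\]
Here $(f^{n+m})_*=(f^m)_*\circ(f^n)_*$ by functoriality of push-forward under composition of the correspondences. Moreover, $(f^m)_*$ of a (horizontal positive closed) current depends only on its restriction to the domain $D_{v,m}$ of $f^m$, which by \eqref{e:hp-inclusion} is contained in $D_{v,1}\subset M''\times N$; on the other hand $(f^n)_*(S)$ is supported on $D_{h,n}\subset D_{h,1}\subset M\times N''$. Hence the part of $(f^n)_*(S)$ that enters the computation of $(f^m)_*$ is supported in $(M''\times N)\cap(M\times N'')=M''\times N''\Subset D'$, so replacing $(f^n)_*(S)$ by its restriction to $D'$ changes nothing in $(f^m)_*$, and in particular on $M'\times N$. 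Verifying this book-keeping on supports and domains of the partially defined iterates is the one genuinely delicate step.

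Next I would set $T':=((f^n)_*(S))_{|D'}$. Since $f^n$ is itself a horizontal-like map satisfying \eqref{e:hp-inclusion}, Lemma \ref{l:collection-shift}(ii) gives $T'\in\mathcal H_s(D')$, and because $(f^n)_*(S)$ is positive with support projecting into $N''\Subset N'$ we get
\[
\|T'\|_{D'} = \|(f^n)_*(S)\|_{M'\times N'} \le \|(f^n)_*(S)\|_{M'\times N} \le d^+_{s,n}.
\]
By homogeneity of $(f^m)_*$ together with the definition of $d^+_{s,m}$ (if $\|T'\|_{D'}=0$ then $T'=0$ on $D'$ and the inequality below is trivial; otherwise normalize $T'$ to mass $1$ on $D'$), we obtain $\|(f^m)_*(T')\|_{M'\times N}\le \|T'\|_{D'}\,d^+_{s,m}\le d^+_{s,n}\,d^+_{s,m}$. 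Combining with the displayed identity and taking the supremum over $S\in\mathcal H_s^{(1)}(D')$ yields $d^+_{s,n+m}\le d^+_{s,m}\,d^+_{s,n}$.

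Finally, $d^+_{s,1}\le A<\infty$ by Lemma \ref{l:collection-shift}(ii), so sub-multiplicativity gives $d^+_{s,n}\le (d^+_{s,1})^n<\infty$; moreover $d^+_{s,n}>0$ for every $n$ by the argument of Remark \ref{r:dsge1}, since the push-forward of the integration current on a horizontal analytic subset of dimension $s$ meeting $\mathcal K_+$ has mass bounded below by a positive constant. Thus $(\log d^+_{s,n})_n$ is a finite sub-additive sequence, and Fekete's lemma shows that $\lim_n (d^+_{s,n})^{1/n}$ exists and equals $\inf_{n\ge 1}(d^+_{s,n})^{1/n}$; since the limit exists it coincides with the $\limsup$ defining $d^+_s$, which is the asserted formula.
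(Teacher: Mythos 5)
Your proposal is correct and follows essentially the same route as the paper: factor $(f^{n+m})_*=(f^m)_*\circ(f^n)_*$, use \eqref{e:hp-inclusion} to see that the (normalized) restriction of $(f^n)_*(S)$ is again an element of $\mathcal H_s^{(1)}(D')$, apply the definition of $d^+_{s,m}$, and conclude with Fekete's lemma (and Remark \ref{r:d+(f)=d-(f-1)} for $d^-_s$). The only cosmetic difference is that you fix $n,m\ge 1$, so one should add the one-line observation that $d^{\pm}_{s,0}=1$ to cover the cases $n=0$ or $m=0$, exactly as the paper does.
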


\begin{proof}
 We prove the assertion for $d_s^+$ for a given $0\leq s \leq p$, the argument is the same for $d_s^-$ for $0\leq s \leq k-p$,  see also Remark \ref{r:d+(f)=d-(f-1)}.
 As $d_{s,0}^+=1$ for all $0\leq s \leq p$, we can assume that $n\geq 1$.

 Take $S\in \mathcal{H}_s^{(1)}(D')$. Then, using that $f$ is horizontal-like,
  for all $m\geq 0$
  we have
 \begin{align*}
     \|(f^{n+m})_*(S)\|_{M'\times N}&= \|(f^{m})_*((f^{n})_*(S))\|_{M'\times N}\\
    &\le \|(f^{n})_*(S)\|_{M'\times N} \cdot \|(f^{m})_*(T)\|_{M'\times N}\\
  &  \le  d_{s,n}^+\cdot \|(f^{m})_*(T)\|_{M'\times N},
 \end{align*}
   where $T:= 
   (\|(f^{n})_*(S)\|_{M'\times N})^{-1}
   \cdot
   ((f^{n})_*(S))_{|M'\times N} $
   is a current of mass 1 on $M'\times N$. 
More precisely,
$T$ is a horizontal positive closed current whose support is contained in 
 the horizontal subset
$D_{h,n}'$  of $M'\times N$. By \eqref{e:hp-inclusion},
it is then also a horizontal positive closed current on $M'\times N'$.
   In particular, $T$ belongs to $\mathcal H_s^{(1)}(D')$.
   Therefore, 
    by Definition \ref{d:degrees-currents}, for all $m\geq 0$
   we have
   $\|(f^{m})_*(T)\|_{M'\times N}\le d_{s,m}^+$.
   It follows that
     $$ \|(f^{n+m})_*(S)\|_{M'\times N}  \le  d_{s,m}^+d_{s,n}^+
   \quad   \mbox{ for all } S \in \mathcal H_s^{(1)}(D'),$$
     which implies that $d_{s,n+m}^+\le  d_{s,m}^+d_{s,n}^+$  for all $n\geq 1$ and $m\geq 0$.
 By the classical Fekete lemma, the second assertion is a consequence of the first one.
 \end{proof}

\section{Monotonicity of
dynamical degrees of type I}\label{s:monotone-smooth}

 In this section we prove the
  assertions in Theorems \ref{t:main-intro} and \ref{c:PL-intro}
  involving the dynamical degrees of type I. We first deal with H\'enon-like maps.
Recall that, in this case,
we fix integers $1\leq p<k$, a bounded and convex domain $D= M\times N\subset\C^p \times \C^{k-p}$
and 
the convex open sets $M''\Subset M'\Subset M$
and $N''\Subset N'\Subset N$
are assumed to be sufficiently close to $M$
and $N$
so that
\eqref{e:hp-inclusion} and
Lemma \ref{l:collection-shift} hold.
The following result gives the first 
chain of inequalities in Theorem \ref{t:main-intro}.
   The $\lambda^{+}_{s}$'s
  and 
  $\lambda^{-}_{s}$'s
  in the statement are given by computing
 the
 masses on $M'\times N$ and $M\times N'$
in Definition \ref{d:degrees-smooth}, respectively.

\begin{proposition} \label{p:lambda}
 Let
$1\leq p<k$ be integers, $f$ be
 a H\'enon-like map on 
$D = M\times N \subset \C^p \times \C^{k-p}$,
and 
$M',M'',$ $N',$ $N'',$ $ \lambda^+_s,\lambda^-_s$ be as above.
Then, we have
\[\lambda_{s}^+\leq  \lambda_{s+1}^+  \;\;
\mbox{ for } \;\;
0\leq s\leq p-1
\quad \mbox{ and } \quad 
\lambda_{s}^-\leq  \lambda_{s+1}^- \;\; \mbox{ for } \;\;
0\leq s\leq k-p-1.\]
\end{proposition}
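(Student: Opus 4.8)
The plan is to prove the two chains of inequalities by the same argument, working with $\lambda^+_s$ (the case of $\lambda^-_s$ follows by replacing $f$ with $f^{-1}$, which is a vertical-like map, using Remark \ref{r:lambda-f-1}). So fix $0\le s\le p-1$; I want to show $\lambda^+_s\le\lambda^+_{s+1}$. The idea is that for smooth test forms the passage from dimension $s$ to dimension $s+1$ is cheap: one simply needs to produce, from the smooth horizontal positive closed $(k-s,k-s)$-form $\omega^{k-s}$, a smooth horizontal positive closed $(k-s-1,k-s-1)$-form (of dimension $s+1$) that controls it, and then track the mass growth through the $f$-action. Concretely, I would first replace $\omega^{k-s}$ on $D''$ by a smooth horizontal positive closed $(k-s,k-s)$-form, which is harmless by Lemma \ref{l:general-smooth-bound-compact} and the independence of the degrees from the domains (Lemma \ref{l:indep-deg-smooth}); this is the smooth analogue of the construction in Theorem \ref{t:family}, but here no currents with singularities appear, so no $dd^c$-surgery is needed.

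The key step: I would take a bounded convex $D^\star=M^\star\times N^\star$ with $D''\Subset D^\star\Subset D'$ and apply (the smooth version of) the deformation construction. Since $\omega^{k-s}$ is already smooth, one can bypass Lemma \ref{l:ddcpsi} entirely and directly build a smooth horizontal positive closed $(k-s-1,k-s-1)$-form $\Omega$ on $D'$ — for instance by averaging pullbacks of $\omega^{k-s}$ under the translations $h_{a,\theta}$ and then integrating in $\theta\in\D$ against a fixed smooth probability density, exactly as in the proof of Theorem \ref{t:family} but without the cutoff surgery — with the property that $\Omega$ dominates a positive multiple of $\omega^{k-s}$ on $D''$ and is horizontal with controlled support. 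Equivalently, since $\Omega$ is of dimension $s+1$ with $s+1\le p$, Lemma \ref{l:general-smooth-bound-compact} directly gives such an $\Omega$ which is strictly positive on $\overline{D''}$, hence $\omega^{k-s}\lesssim\Omega$ on $D''$, and conversely $\Omega\lesssim\omega^{k-s-1}$ on all of $D'$ since both are smooth and $D'\Subset D$; but $\omega^{k-s-1}$ computes $\lambda^+_{s+1}$.

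From here the estimate is a direct computation: pushing forward by $f^n$ and using that $f_*$ is positive and continuous on horizontal currents, together with the compactness estimate Lemma \ref{l:collection-shift}(ii) and the inclusions \eqref{e:hp-inclusion}, I get
\[
\lambda^+_{s,n}=\|(f^n)_*(\omega^{k-s}_{|D''_{v,n}})\|_{M''\times N}\lesssim \|(f^n)_*(\Omega_{|D''_{v,n}})\|_{M'\times N}\lesssim \|(f^n)_*(\omega^{k-s-1}_{|D'_{v,n}})\|_{M'\times N}=\lambda^+_{s+1,n},
\]
where the first mass is computed on the smaller domain $D''$ (allowed by Lemma \ref{l:indep-deg-smooth}) and the middle inequality uses $\omega^{k-s}\lesssim\Omega$ on $D''$ together with positivity of $(f^n)_*$; one must be slightly careful that $(f^n)_*\Omega$ restricted to $D'$ is still horizontal of finite mass, which is guaranteed by iterating Lemma \ref{l:collection-shift}(ii) along the decreasing domains $D'_{v,n}\subset D''_{v,n-1}$ as in the proof of Lemma \ref{l:indep-deg-smooth}. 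Taking $n$-th roots and $\limsup$ yields $\lambda^+_s\le\lambda^+_{s+1}$.

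The main obstacle I expect is purely bookkeeping rather than conceptual: making sure the domain shrinking is consistent throughout — that $\Omega$ is defined and positive on the right intermediate domain, that $(f^n)_*\Omega$ stays horizontal with finite mass on $D'$ for every $n$, and that the implicit constants in $\lesssim$ do not depend on $n$ (they come from Lemma \ref{l:collection-shift} applied once, with $n=1$, plus the comparison constants between $\omega^{k-s},\Omega,\omega^{k-s-1}$ on fixed domains). For polynomial-like maps ($p=k$) the same proof applies verbatim once one notes that $N,N',N''$ degenerate to a point and every current is automatically horizontal, so the horizontality conditions are vacuous; this gives the chain $\lambda^+_0\le\dots\le\lambda^+_k$ in Theorem \ref{c:PL-intro}, with $\lambda^+_k=d_t$ by Lemma \ref{l:indep-deg-smooth} and Remark \ref{r:PL}.
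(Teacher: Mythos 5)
The core step of your plan does not make sense, and it is exactly where the real difficulty of the proposition lies. You take $\Omega$ to be a smooth horizontal positive closed $(k-s-1,k-s-1)$-form and claim ``$\omega^{k-s}\lesssim\Omega$ on $D''$'' because $\Omega$ is strictly positive on $\overline{D''}$. But $\omega^{k-s}$ and $\Omega$ have different bi-degrees, and an order relation between positive forms (or currents) is only meaningful between objects of the same bi-degree; strict positivity of $\Omega$ on a compact set lets you dominate $(k-s-1,k-s-1)$-forms there, not $(k-s,k-s)$-forms. Consequently the middle inequality in your display,
\[
\|(f^n)_*(\omega^{k-s}_{|D''_{v,n}})\|\;\lesssim\;\|(f^n)_*(\Omega_{|D''_{v,n}})\|,
\]
does not follow from ``positivity of $(f^n)_*$'': the two masses are computed by pairing against different powers of $\omega$ (namely $\omega^{s}$ and $\omega^{s+1}$), and comparing the growth of a bi-dimension $(s,s)$ object with that of a bi-dimension $(s+1,s+1)$ object is precisely the content of the proposition, not something one gets for free. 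Your alternative suggestion (averaging pullbacks under the translations $h_{a,\theta}$ and integrating in $\theta$) produces again a form of bi-degree $(k-s,k-s)$ unless you perform the full product-space slicing construction of Theorem \ref{t:family}, which in turn is only exploited through the DSH/Skoda argument of Section \ref{s:monotone-general}, not through a pointwise domination.

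What the paper actually does in the smooth case is dimensionally consistent: it writes $\omega^{k-s}\lesssim\Omega\wedge\omega^{p-s}$ on $D''$, where $\Omega$ is a smooth horizontal positive \emph{closed} $(k-p,k-p)$-form with $\omega^{k-p}_{|D''}\lesssim\Omega\lesssim\omega^{k-p}$ on $D'$ (same total bi-degree on both sides), and then trades one factor $\omega=dd^c\|z\|^2$ for an extra power of $\omega$ inside the pulled-back test form by an integration by parts: since $\Omega$ is closed and horizontal and the cut-off $\chi_1$ is vertical, Stokes' theorem moves $dd^c$ onto $\chi_1$, giving a bound by $\int \Omega\wedge\omega^{p-s-1}\wedge (f^{n+1})^*(\chi_2\,\omega^{s+1})$, and Lemma \ref{l:collection-shift}(i) then reduces $(f^{n+1})^*$ to $(f^n)^*$ at the cost of a constant. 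This yields $\lambda^+_{s,n+1}\lesssim\lambda^+_{s+1,n}$ (note the shift of one iterate, which your plan also misses) and hence $\lambda^+_s\le\lambda^+_{s+1}$ after taking roots and using Lemma \ref{l:indep-deg-smooth}. Your reductions to $\lambda^-$ via $f^{-1}$ and to the polynomial-like case are fine, but without the Stokes/cut-off mechanism the central estimate in your proposal is unsupported.
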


\proof 
 It is enough to
prove the inequalities for the degrees $\lam_{s}^+$. The 
assertions on the degrees $\lambda_s^-$
can be obtained
by replacing $f$  with $f^{-1}$,  see Remark \ref{r:d+(f)=d-(f-1)}.
We fix $0\leq s < p$ in the following.
 Since $1\le p< k$, by Lemma \ref{l:general-smooth-bound-compact}
there exists
a
 smooth horizontal closed $(k-p,k-p)$-form $\Omega$ 
 on $D'$
 satisfying $\omega^{k-p}_{|D''}\lesssim \Omega \lesssim \omega^{k-p}_{D'}$.
 Let $\chi_1$ be a cut-off function equal to 1 on $M''\times N$  and which vanishes
in a neighbourhood of $(M\setminus M')\times N$.
 Let $\chi_2$ be another cut-off function equal to 1 on the support of $\chi_1$ and vanishing 
 in a neighbourhood of $(M\setminus M')\times N$.
 In particular, the supports of both $\chi_1$
 and $\chi_2$ are vertical in $M'\times N$.
 Then, for all $n\geq 1$,  using
  the fact that
$s< p$ we have
\[\begin{aligned}
\|(f^{n+1})_*(\omega^{k-s}_{|D''_{v,n+1}})\|_{M''\times N'}
&= \int_{D''_{h,n+1}}
(f^{n+1})_* (\omega^{k-s})\wedge \omega^s\\
&\lesssim \int_{
D''_{h,n+1}}
(f^{n+1})_*(\Omega\wedge \omega^{p-s}) \wedge \chi_1\omega^{s} \\
&\leq \int_{
D'_{v,n+1}}
\Omega\wedge \omega^{p-s} \wedge (f^{n+1})^*(\chi_1\omega^{s}) \\
&\lesssim \int_{
M'\times N}
\ddc\|z\|^2 \wedge \Omega\wedge \omega^{p-s-1} \wedge (f^{n+1})^*(\chi_1\omega^{s}),
\end{aligned}\]
where $z$ is the standard 
coordinate on $\mathbb C^k$ such that
$\omega = \ddc \|z\|^2$ 
and in the last step we also used the fact that
$D'_{v,n+1}\subset D'_{v,1}\subset M'\times N$, see \eqref{e:hp-inclusion}.
Since $\Omega$ is horizontal 
and closed
and $\chi_1$ is vertical in $M'\times N$,
we can apply Stokes theorem and deduce that
\[\begin{aligned}
\|(f^{n+1})_*(\omega^{k-s}_{|D''_{v,n+1}})\|_{M''\times N'}
&\lesssim  \int_{M'\times N} \|z\|^2 \Omega\wedge \omega^{p-s-1} \wedge (f^{n+1})^*(\ddc\chi_1 \wedge \omega^{s}) \\
&\lesssim \int_{M'\times N}  \Omega\wedge \omega^{p-s-1} \wedge (f^{n+1})^*(\chi_2\omega^{s+1}) \\
&\lesssim \int_{M''\times N}  \Omega\wedge \omega^{p-s-1} \wedge (f^n)^*(\chi_2\omega^{s+1}),
\end{aligned}\]
where in the last step we used again
the
fact that
$D'_{v,n+1}\subset D'_{v,n}
\subset  M''\times N$
 (see \eqref{e:hp-inclusion}),  Lemma \ref{l:collection-shift}, and the fact that 
$(f^{n+1})^*(\chi_2)\le (f^{n})^*(\chi_2)$ on $D'_{v,n+1}$.
Using the fact that $\chi_2$ is supported in $M'\times N$, and the above upper bound for $\Omega$,
we obtain
\[\begin{aligned}
\|(f^{n+1})_*(\omega^{k-s}_{|D''_{v,n+1}})\|_{M''\times N'}
& \lesssim 
\int_{D'}
\Omega\wedge \omega^{p-s-1} \wedge (f^n)^*(\omega^{s+1})\\
& =\int_{D'_{h,n}}  (f^n)_*(\Omega\wedge \omega^{p-s-1}) \wedge \omega^{s+1}\\
& \lesssim \| (f^n)_*( \omega^{k-s-1}_{|D'_{v,n}}))\|.
\end{aligned}\]
This proves the inequality $\tilde \lambda^+_{s,n+1}\lesssim  \lambda^+_{s+1,n}$,
where $\tilde \lambda^+_{s,n}$
is the quantity defined using $D''_{v,n}$ instead of $D'_{v,n}$
in Definition \ref{d:degrees-smooth},
and 
the implicit constant is independent of $n$.
The inequality  $\lambda^+_s \leq \lambda^+_{s+1}$
follows by taking the powers $1/n$ and a $\limsup$
for $n\to \infty$ in the previous one, and applying Lemma \ref{l:indep-deg-smooth}. The proof is complete.
\endproof

The following proposition gives the first part of Theorem \ref{c:PL-intro}. 
\begin{proposition}\label{p:PL-lambda}
     Let $f:U\to V$ be a 
polynomial-like map where $U\Subset V$ is open 
and $V\subset \C^k$ is a bounded convex open set.
Then 
 $\lambda_s^+\leq \lambda_{s+1}^+$ for all $0\leq s\leq k-1.$
\end{proposition}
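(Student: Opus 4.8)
The plan is to run the proof of Proposition \ref{p:lambda} essentially verbatim, exploiting that a polynomial-like map is the special case $p=k$ of a horizontal-like map (Remark \ref{r:PL}), in which the geometry collapses: $N$ reduces to a point, every current on $M$ is automatically ``horizontal'', the vertical/horizontal boundary conditions become vacuous, and the auxiliary horizontal closed $(k-p,k-p)$-form $\Omega$ furnished by Lemma \ref{l:general-smooth-bound-compact} may simply be taken to be the constant function $1$. I work in the setting of Remark \ref{r:PL}: set $M:=V$, and choose convex open sets $M''\Subset M'\Subset M$ close enough to $M$ that \eqref{e:hp-inclusion} holds (here it just says $f^{-1}(M)\subset M''$) and Lemma \ref{l:collection-shift} applies. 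Then $D_{h,n}=M$, $D'_{h,n}=M'$, $D'_{v,n}=f^{-n}(M')$, the sequence $(D_{v,n})$ is decreasing, and in particular $D'_{v,n+1}\subset M''\subset M'$ and $f^{n}(D'_{v,n+1})=f^{-1}(M')\subset M''$.

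Fix $0\le s<k$. Following the proof of Proposition \ref{p:lambda}, I introduce cut-off functions $\chi_1$ equal to $1$ on $M''$ and compactly supported in $M'$, and $\chi_2$ equal to $1$ on $\supp\chi_1$ and compactly supported in $M'$. Writing $\tilde\lambda^+_{s,n}$ for the quantity of Definition \ref{d:degrees-smooth} computed with $D''$ in place of $D'$, one starts from $\tilde\lambda^+_{s,n+1}=\int_{M''}(f^{n+1})_*(\omega^{k-s}_{|D''_{v,n+1}})\wedge\omega^s$ and reproduces the chain of inequalities of that proof: insert $\chi_1$; pull back by $f^{n+1}$, using $D'_{v,n+1}\subset M'$; split $\omega^{k-s}=\ddc\|z\|^2\wedge\omega^{k-s-1}$ and integrate by parts via Stokes' theorem, the boundary term vanishing because $\chi_1$ has compact support in $M'$; dominate $\ddc\chi_1\wedge\omega^s$ by $\chi_2\,\omega^{s+1}$; invoke Lemma \ref{l:collection-shift}(i) together with the inequality $(f^{n+1})^*\chi_2\le(f^n)^*\chi_2$ on $D'_{v,n+1}$ (valid since $\chi_2\equiv1$ on $f^{n}(D'_{v,n+1})=f^{-1}(M')$) to replace the iterate $n+1$ by $n$; bound $\chi_2\le1$; and finally push forward to recognize $\lambda^+_{s+1,n}=\|(f^n)_*(\omega^{k-s-1}_{|D'_{v,n}})\|$ as an upper bound. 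The outcome is $\tilde\lambda^+_{s,n+1}\lesssim\lambda^+_{s+1,n}$ with implicit constant independent of $n$.

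Taking $n$-th roots and passing to the $\limsup$ as $n\to\infty$, and using the independence of the degrees of type I from the choice of $M'$ (Lemma \ref{l:indep-deg-smooth}, which in the polynomial-like case is \cite[Lemma 2.3]{DS10}) to replace $\tilde\lambda^+_s$ by $\lambda^+_s$, one obtains $\lambda^+_s\le\lambda^+_{s+1}$, as claimed.

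I do not expect a genuine obstacle here: the content is entirely a specialization of Proposition \ref{p:lambda}. The only points needing care are that the two places where the H\'enon-like proof used the bidirectional product structure — the vanishing of the boundary term in Stokes' theorem (there justified by $\Omega$ being horizontal and $\chi_1$ vertical) and the shift from iterate $n+1$ to iterate $n$ — now go through using merely the compact support of $\chi_1,\chi_2$ in $M'$ together with the inclusions $D'_{v,n+1}\subset M''$ and $f^{-1}(M)\subset M''$; and that, since $\omega^{k-p}=1$, the form $\Omega$ simply drops out of the computation. Thus the ``hard part'' is purely bookkeeping: checking that each line of the borrowed estimate survives the specialization $p=k$.
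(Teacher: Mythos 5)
Your proof is correct and follows essentially the same route as the paper's: the authors also prove Proposition \ref{p:PL-lambda} by specializing the argument of Proposition \ref{p:lambda} to $p=k$, taking $\Omega\equiv 1$, cut-offs $\chi_1,\chi_2$ compactly supported in $V'$ with $\chi_1\equiv 1$ on $V''\supset U=f^{-1}(V)$, and then invoking the domain-independence of the degrees of type I (Lemma \ref{l:indep-deg-smooth}). Your filling-in of the two delicate points (vanishing of boundary terms via properness of $f$ and compact support of $\chi_1$, and the shift from iterate $n+1$ to $n$ via $\chi_2\equiv 1$ on $f^{-1}(M')\subset M''$) matches what the paper leaves as "the same computations."
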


\begin{proof}

 The proof 
 is similar 
  that
 of Proposition \ref{p:lambda},  hence
 we only sketch it. 
   
   Let $V''$ and $V'$ be convex domains satisfying $U\Subset V''\Subset V'\Subset V$. 
  Note that $f^{-1}(V)=U\Subset V''$.  In the sense of Remark \ref{r:PL},   we have the  identifications
  \[M'\times N=V'=D'_{h,n},
  \quad M''\times N'=V''=D''_{h,n}, \quad D'_{v,n}=f^{-n}(V'), \quad \mbox{and}\quad
  D''_{v,n+1}=f^{-n-1}(V'').\] 
 Recall that in this case $N=N'=N''$ is a single point.
  
 As $p=k$, we have $k-p=0$. By putting $\Omega\equiv1$
   and taking two cut-off functions $\chi_1$ and $\chi_2$ 
  with compact support in $V'$ and  such that $\chi_1=1$ on $V''$ and $\chi_2=1$ on the support of $\chi_1$,  we can repeat the same  computations as in the proof of  Proposition \ref{p:lambda}  and
  obtain
   \begin{align*}
    \|(f^{n+1})_*(\omega^{k-s})\|_{V''}
&\lesssim  \|(f^{n+1})_*(\omega^{k-s-1})\|_{V'}
   \end{align*}
   for every $0\le s\le k-1$,
    where the implicit constant is independent of $n$.
      Since $\lambda_s^+$ is independent of the choice of $V'$ (see 
     Lemma \ref{l:indep-deg-smooth})
   we have $$\lambda_s^+=\limsup_{n\to\infty} \|(f^{n+1})_*(\omega^{k-s})\|_{V''}^{1/n}
\leq  \limsup_{n\to\infty}\|(f^{n+1})_*(\omega^{k-s-1})\|_{V'}^{1/n}=\lambda_{s+1}^+.$$ 
 The proof is complete.
   \end{proof}

\section{Monotonicity of dynamical degrees of type II
}\label{s:monotone-general}

The following theorems answer \cite[Question 6.3]{DNS}
and complete the proof
of Theorems \ref{t:main-intro} and \ref{c:PL-intro}.
We work with the choice of $M',M'',N',N''$
as in the beginning of Section \ref{s:monotone-smooth}. 
In particular, we assume
  that
    \eqref{e:hp-inclusion}
  and Lemma \ref{l:collection-shift} hold.
 As in the previous sections, 
the degrees $d^+_{s,n}$ and $d^+_s$ are computed with respect to $M'$ and $N'$,
as in Definition \ref{d:degrees-currents}.

\begin{theorem} \label{t:degrees}
Let  $ 1\leq p<k$ be integers and $f$ be  a H\'enon-like map
on a
bounded convex domain
$D = M\times N \subset \C^p \times \C^{k-p}$.
Then 
 \[d_s^+\leq d_{s+1}^+  \;\;
 \mbox{ for }
 0\leq s\leq p-1
 \quad 
\mbox{ and } \quad 
 d_{s}^{-}\leq d_{s+1}^{-}\;\;
\mbox{ for } \;\; 0\leq s\leq k-p-1.\]
 In particular, all the dynamical degrees 
 of $f$
 are smaller than or equal to $ d_p^+ = d^-_{k-p} =d$.
\end{theorem}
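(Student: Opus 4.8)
The plan is to prove the inequality $d_s^+\le d_{s+1}^+$ for $0\le s\le p-1$ (the inequalities for $d_s^-$ then follow by replacing $f$ with $f^{-1}$, using Remark~\ref{r:d+(f)=d-(f-1)}, and the final identification $d_p^+=d_{k-p}^-=d$ is Lemma~\ref{l:degreeind}). Fix $0\le s\le p-1$ and let $S\in\mathcal H_s^{(1)}(D')$ be a horizontal positive closed current of bi-dimension $(s,s)$ and mass $1$. After slightly shrinking the domain we may assume $S$ is supported in $M\times N''$, so that Theorem~\ref{t:family} applies: we obtain a positive closed current $\mathcal R$ of bi-dimension $(s+1,s+1)$ on $M'\times N\times\D$ with slices $\mathcal R_\theta$ of mass $\le 1$, smooth off $\pi_\D^{-1}(0)$, horizontal in $M'\times(N\times\D)$, satisfying $S\le c\,\mathcal R_0$ and the Lipschitz estimate $\dist_{D'}(\mathcal R_\theta,\mathcal R_{\theta'})\le|\theta-\theta'|$. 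Set $R:=(\pi_{M'\times N})_*\mathcal R$; this is a horizontal positive closed current of bi-dimension $(s+1,s+1)$ of mass $\le 1$ on $M'\times N$, hence (after the usual harmless rescaling) an admissible competitor for the definition of $d_{s+1,n}^+$.

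The core of the argument is to compare the mass growth of $(f^n)_*(S)$ with that of $(f^n)_*(R)$. The key point is that $(f^n)_*(R)=(F^n)_*(\mathcal R)$ pushed forward by $\pi_{M'\times N}$, where $F:=(f,\id)$ on $(M'\times N)\times\D$, and that the mass of $(f^n)_*(R)$ in $M'\times N$ controls, slice by slice, the integral over $\theta$ of the masses of $(f^n)_*(\mathcal R_\theta)$. I would introduce the normalized functions
\[
\phi_n(\theta):=\frac{\|(f^n)_*(\mathcal R_\theta)\|_{M'\times N}}{d_{s,n}^+}\,,\qquad \theta\in\D.
\]
First, each $\mathcal R_\theta$ is horizontal positive closed of bi-dimension $(s,s)$ and mass $\le 1$ on $D'$, so by definition of $d_{s,n}^+$ we get $\phi_n\le 1$ on $\D$ (or on a slightly smaller disc, which suffices). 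Second, using the Lipschitz estimate (iv) of Theorem~\ref{t:family} together with Lemma~\ref{l:collection-shift} (which gives uniform bounds on $f_*$ in the $\dist_{D'}$ semi-distance), one shows that the family $\theta\mapsto\phi_n(\theta)$ is uniformly bounded in the DSH norm on $\D$, with a bound independent of $n$: this is where one uses that $\log\phi_n$, or rather $\phi_n$ itself viewed through the slicing, inherits a plurisubharmonic-type control from the holomorphic nature of the deformation $h_{a,\theta}$. Third, since $S\le c\,\mathcal R_0$, we have $\|(f^n)_*(S)\|_{M'\times N}\le c\,\|(f^n)_*(\mathcal R_0)\|_{M'\times N}$, i.e. $\phi_n(0)\ge c^{-1}\|(f^n)_*(S)\|_{M'\times N}/d_{s,n}^+$.

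Now Skoda's integrability/semicontinuity theorem for DSH (or quasi-psh) functions enters: a family of functions uniformly bounded in DSH norm and uniformly bounded above cannot have a value at $\theta=0$ that is much larger than its average over a fixed neighbourhood of $0$ — more precisely, $\phi_n(0)\lesssim \int_{\D_{1/2}}\phi_n\,d\Leb$ up to a uniform constant (this is essentially the sub-mean-value property combined with the uniform DSH bound). Combining this with $\int_{\D_{1/2}}\phi_n\,d\Leb\lesssim \|(f^n)_*(R)\|_{M'\times N}/d_{s,n}^+\lesssim d_{s+1,n}^+/d_{s,n}^+$, we obtain
\[
\|(f^n)_*(S)\|_{M'\times N}\;\lesssim\; d_{s+1,n}^+
\]
with an implicit constant independent of $S$ and $n$. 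Taking the supremum over $S\in\mathcal H_s^{(1)}(D')$ gives $d_{s,n}^+\lesssim d_{s+1,n}^+$, hence $d_s^+\le d_{s+1}^+$ after taking $n$-th roots and letting $n\to\infty$, using Lemma~\ref{l:submult-degrees} and Lemma~\ref{l:degreeind} to pass freely between domains. I expect the main obstacle to be the second step: establishing the \emph{uniform} (in $n$) DSH bound for $\phi_n$ on $\D$, since this requires carefully tracking how the smoothness and the Lipschitz control provided by Theorem~\ref{t:family} survive the iteration $(f^n)_*$, and in particular handling the singularity of $\mathcal R$ along $\pi_\D^{-1}(0)$ and the boundary behaviour of the currents — the reason the paper works with a whole cascade of shrinking domains $M''\Subset M^\star\Subset M'\Subset M$.
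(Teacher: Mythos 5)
Your overall architecture is the same as the paper's (deform $S$ via Theorem~\ref{t:family}, form slice functions $\phi_n(\theta)$, use the DSH formalism, the Lipschitz estimate (iv), and Skoda), but the step that is supposed to convert the lower bound $\phi_n(0)\gtrsim 1$ into $d^+_{s,n}\lesssim d^+_{s+1,n}$ has a genuine gap, in two places. First, the inequality ``$\phi_n(0)\lesssim\int_{\D_{1/2}}\phi_n\,d\Leb$ up to a uniform constant'' is not available: DSH functions do not satisfy a pointwise sub-mean-value inequality (a small negative part of $dd^c\phi_n$ concentrated at $0$ can make $\phi_n(0)$ arbitrarily large while the average stays bounded), and if you instead use the Lipschitz control in $\theta$, its constant is of order $L^n$ (your own Claim-3-type estimate), so $\phi_n$ is only guaranteed to stay $\gtrsim\beta$ on a disc of radius $\sim L^{-n}$. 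An averaging argument then yields at best $d^+_{s,n}\lesssim L^{2n}\,d^+_{s+1,n}$, which after taking $n$-th roots gives $d^+_s\le L^2 d^+_{s+1}$ and proves nothing. This is exactly the difficulty the paper's proof is designed to beat: it argues by contradiction, assumes $d^+_{s,n_j+1}=c_jn_j\,\tilde d^+_{s+1,n_j}$ with $c_j\to\infty$, shows that then $\|dd^c\phi_{n_j}\|\lesssim (c_jn_j)^{-1}$ so that $c_jn_j\phi_{n_j}$ is bounded in DSH, and applies Skoda's exponential estimate; the factor $c_jn_j$ in the exponent is what defeats the exponentially small radius $L^{-n_j}$, and no fixed-constant averaging can replace it.

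Second, the chain $\int_{\D_{1/2}}\phi_n\,d\Leb\lesssim\|(f^n)_*(R)\|_{M'\times N}/d^+_{s,n}$ with $R=(\pi_{M'\times N})_*\mathcal R$ is unjustified: by the slicing formula, the integral of the slice masses of $(F^n)_*\mathcal R$ is the mass of $(F^n)_*\mathcal R$ tested against (roughly) $\omega^s\wedge\omega_\D$, while $\|(f^n)_*(R)\|$ tests it against $\pi_{M'\times N}^*\omega^{s+1}$; these are different components of the mass of a positive current and neither controls the other (for instance the push-forward $(\pi_{M'\times N})_*$ can annihilate a current of the form $T\boxtimes[\D]$ while its slice masses are all equal to $\|T\|$). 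The paper brings the degree $d^+_{s+1,n}$ into play differently: it computes $dd^c\phi_n=(\pi_\D)_*\big(\mathcal R_n\wedge\pi^*((f^n)^*(dd^c\chi\wedge\omega^s))\big)$, bounds $dd^c\chi\wedge\omega^s$ by a constant times $\omega^{s+1}$, and then pairs $(f^n)^*\omega^{s+1}$ with the horizontal current $(\pi_{M^\star\times N})_*\mathcal R_n$ of mass at most $1$, which is an admissible competitor for $\tilde d^+_{s+1,n}$; this yields $\|dd^c\phi_n\|\lesssim (d^+_{s,n+1})^{-1}\tilde d^+_{s+1,n}$ and is where the cut-off $\chi$ and the mass bound of Theorem~\ref{t:family} are really used. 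Finally, a smaller point: rather than a single $S$, the paper chooses for each $n$ a near-extremal $S_n$ with $\|(f^{n+1})_*S_n\|\gtrsim d^+_{s,n+1}$ and normalizes $\phi_n$ by $d^+_{s,n+1}$, which is what makes the contradiction scheme quantitative; your per-current normalization by $d^+_{s,n}$ could be adapted, but without repairing the two points above the argument does not close.
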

 The following
is the counterpart of the above statement for polynomial-like maps.
\begin{theorem}\label{t:PL-degrees}
    Let
    $k\geq 1$ be an integer and
    $f:U\to V$ be a 
polynomial-like map of topological degree $d_t$, where $U\Subset V\Subset \C^k$ are open  sets and $V$ is convex.
Then 
 $d_s^+\leq d_{s+1}^+$ for all $0\leq s\leq k-1.$
 In particular, all
 the
 dynamical degrees
  of $f$
 are smaller than or equal to $d_k^+=d_t$.
\end{theorem}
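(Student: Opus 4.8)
The plan is to derive both Theorem \ref{t:degrees} and Theorem \ref{t:PL-degrees} from the single inequality $d_s^+\le d_{s+1}^+$ for $0\le s\le p-1$: the endpoint value $d_p^+=d$ (resp.\ $d_k^+=d_t$) is Lemma \ref{l:degreeind}, the reversed chain for the degrees $d_s^-$ follows by applying the inequality to $f^{-1}$ as in Remark \ref{r:d+(f)=d-(f-1)}, and, by Remark \ref{r:PL}, the polynomial-like case is exactly $p=k$ (with $N=N'=N''$ a point). So fix $0\le s\le p-1$ and a H\'enon-like or polynomial-like $f$. Since $d_{s,n}^+=\sup_S\|(f^n)_*(S)\|_{M'\times N}$ with $S$ ranging over $\mathcal H_s^{(1)}(D')$, it is enough to produce a bound $\|(f^n)_*(S)\|_{M'\times N}\le \epsilon_n\, d_{s+1,n}^+$ with $\epsilon_n^{1/n}\to 1$ and with $\epsilon_n$ \emph{independent of $S$}; taking the supremum over $S$ and $n$-th roots then gives $d_s^+\le d_{s+1}^+$. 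Moreover, by Lemma \ref{l:collection-shift} and \eqref{e:hp-inclusion}, after replacing $S$ by the normalization of $(f_*(S))_{|D}$ — which costs one lost iterate and a fixed multiplicative constant — we may assume $S$ is a horizontal positive closed $(s,s)$-current of mass $1$ on $M\times N$ with support in $M\times N''$, i.e.\ exactly the hypothesis of Theorem \ref{t:family}.

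First I would apply Theorem \ref{t:family} to this $S$, obtaining the deformation $\mathcal R$ on $M'\times N\times\D$ of bi-dimension $(s+1,s+1)$, smooth off $\pi_\D^{-1}(0)$, with slices $\mathcal R_\theta$ of mass $\le 1$, with $S\le c\mathcal R_0$ on $M'\times N$, with $\dist_{D'}(\mathcal R_\theta,\mathcal R_{\theta'})\le|\theta-\theta'|$, and with horizontal support in $M'\times(N'\times\D_{1/2})$. Set $R:=(\pi_{M'\times N})_*\mathcal R$; by the theorem $R$ is a horizontal positive closed $(s+1,s+1)$-current on $D'$ with $0<\|R\|\le 1$, so $R\in\mathcal H_{s+1}(D')$ and $\|(f^n)_*(R)\|_{M'\times N}\le d_{s+1,n}^+$ for all $n$. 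Let $F:=(f,\mathrm{id})$ on $D'\times\D$; this is \emph{not} horizontal-like (the last coordinate is neutral), but since $\mathcal R$, and hence each $(F^n)_*\mathcal R$, is supported in $D'\times\overline\D_{1/2}$, the bad behaviour near $\partial\D$ never enters, $(\pi_{M'\times N})_*(F^n)_*\mathcal R=(f^n)_*(R)$, and $F$ behaves like a horizontal-like map on the support of $\mathcal R$, whence $\|(F^n)_*\mathcal R\|_{D'\times\D}\lesssim d_{s+1,n}^+$ with a constant independent of $n$ and $S$. Expanding $\omega^{s+1}$ on $M'\times N\times\D$ into its $\pi_\D^*\omega_\D$-free and its $\pi_\D^*\omega_\D$-containing parts (the square of $\pi_\D^*\omega_\D$ vanishing) and using the projection formula and the disintegration of $\mathcal R$ into slices, one gets that $\|(F^n)_*\mathcal R\|_{D'\times\D}$ equals, up to fixed multiplicative constants, $\|(f^n)_*(R)\|_{M'\times N}+\int_{\D}\phi_n(\theta)\,d\nu(\theta)$, where $\phi_n(\theta):=\|(f^n)_*(\mathcal R_\theta)\|_{M'\times N}$ and $d\nu$ is a fixed measure supported in $\D_{1/2}$; in particular $\int_{\D}\phi_n\,d\nu\lesssim d_{s+1,n}^+$.

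The core of the argument is the analysis of the slice-mass functions $\phi_n$. Using that $\mathcal R$ is positive and closed and that, by Theorem \ref{t:family}(i),(iv), the family $(\mathcal R_\theta)$ is holomorphic in $\theta$ and smooth and Lipschitz-continuous away from $\theta=0$ with constants uniform in $n$ and $S$, together with Lemma \ref{l:collection-shift}, one shows that the functions $\log\phi_n$ enjoy uniform (in $n$ and $S$) $\DSH$-type control in $\theta$ — more precisely that they are quasi-plurisubharmonic on $\D$ with a constant independent of $n$, and are bounded above by $Bn$ for a fixed $B$. Skoda's exponential-integrability theorem for $\DSH$ functions then transfers a large exponential growth rate of $\phi_n$ at $\theta=0$ to a comparable growth on a subset of $\D_{1/2}$ of definite measure, hence to a comparable lower bound for $\int_\D\phi_n\,d\nu$; combined with $\int_\D\phi_n\,d\nu\lesssim d_{s+1,n}^+$ from the previous step, this yields $\phi_n(0)\le\epsilon_n\,d_{s+1,n}^+$ with $\epsilon_n^{1/n}\to1$. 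Since $S\le c\mathcal R_0$ gives $\phi_n(0)\ge c^{-1}\|(f^n)_*(S)\|_{M'\times N}$ by Theorem \ref{t:family}(iii), and all constants are independent of $S$, this is precisely the uniform estimate required, and $d_s^+\le d_{s+1}^+$ follows.

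I expect the main obstacles to be exactly these last ingredients: proving the \emph{uniform} $\DSH$/quasi-plurisubharmonicity bound for $\log\phi_n$ in $\theta$ — which requires controlling at once the regularity inherited from the positive closed current $\mathcal R$ and the possibly wild behaviour of the currents $\mathcal R_\theta$ near the boundary of $D'$, uniformly in $n$ (this is where most of the technical difficulty lies, and the reason for the auxiliary domains $M'',M^\star,N'',N^\star$ in the construction of Theorem \ref{t:family}); the bound $\|(F^n)_*\mathcal R\|\lesssim d_{s+1,n}^+$, which must be extracted although $F$ is not itself horizontal-like; and making the Skoda transfer quantitative enough that only a sub-exponential factor $\epsilon_n$ is lost, so that the conclusion is an honest inequality between exponential growth rates rather than merely between the mass quantities at a fixed $n$.
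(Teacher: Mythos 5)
Your overall architecture (reduce to a uniform estimate, deform $S$ via Theorem \ref{t:family}, study the slice-mass functions $\phi_n(\theta)=\|(f^n)_*(\mathcal R_\theta)\|$, invoke Skoda) matches the paper, but the step where the degree of order $s+1$ is supposed to enter is flawed, and it is the crux. You claim $\|(F^n)_*\mathcal R\|_{D'\times\D}\lesssim d^+_{s+1,n}$, hence $\int_\D\phi_n\,d\nu\lesssim d^+_{s+1,n}$. Decomposing the K\"ahler form of $\C^{k+1}$ as $\omega+\pi_\D^*\omega_\D$, the $\omega^{s+1}$-part of that mass is indeed $\|(f^n)_*R\|\le d^+_{s+1,n}$, but the $\omega^s\wedge\pi_\D^*\omega_\D$-part — which is exactly $\int_\D\phi_n\,\omega_\D$ up to a constant — is \emph{not} controlled by $d^+_{s+1,n}$: pushing $\mathcal R\wedge\pi_\D^*\omega_\D$ forward by $\pi_{M'\times N}$ produces a horizontal positive closed current of bi-dimension $(s,s)$ and mass $\lesssim 1$, so the natural bound for this piece is $d^+_{s,n}$, i.e.\ the quantity you are trying to dominate. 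Saying that ``$F$ behaves like a horizontal-like map on the support of $\mathcal R$'' does not help, because the order-$(s+1)$ degree of $F=(f,\mathrm{id})$ is morally $\max(d^+_{s+1}(f),d^+_s(f))$ (a horizontal $(s+1,s+1)$-current in $D'\times\D$ can be an $(s,s)$-current in $D'$ spread in the $\D$-direction), so the estimate is circular. In the paper the $(s+1)$-degree enters elsewhere: not through $\int_\D\phi_n$, but through $\|dd^c_\theta\phi_n\|$. Writing $\phi_n=(\pi_\D)_*\bigl(\mathcal R_n\wedge\pi^*((f^n)^*(\chi\omega^s))\bigr)$ with a vertical cut-off $\chi$, the $dd^c$ falls on $\chi$ and produces $dd^c\chi\wedge\omega^s\lesssim\omega^{s+1}$; since $(\pi_{M^\star\times N})_*\mathcal R_n$ is a horizontal positive closed $(s+1,s+1)$-current of mass at most $1$, one gets $\|dd^c\phi_n\|\lesssim \tilde d^+_{s+1,n}$ (after the normalization by $d^+_{s,n+1}$). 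That is the mechanism your proposal is missing.

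There are two further problems in your Skoda step. First, exponential integrability of a DSH-bounded family does not ``transfer a large value at $\theta=0$ to a set of definite measure'': a function like $\min(A,\epsilon\log(1/|\theta|))$ has arbitrarily small $dd^c$-mass, is huge at $0$, and has tiny average, so a pointwise lower bound at $0$ plus small Laplacian mass gives nothing about $\int_\D\phi_n\,d\nu$. What the paper does is propagate the lower bound $\phi_n(0)\gtrsim 1$ (after normalizing $\phi_n$ by $d^+_{s,n+1}$, so that Theorem \ref{t:family}(iii) and the near-optimality of $S_n$ give a constant lower bound) to a disc of radius only $\sim L^{-n}$, using Theorem \ref{t:family}(iv) \emph{together with} the $L^n$-Lipschitz bound for $(f^n)_*$ with respect to the semi-distance (Lemma \ref{l:Lipschitz}); note that the Lipschitz constant of $\theta\mapsto(f^n)_*\mathcal R_\theta$ is exponential in $n$, not uniform as you assert. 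The contradiction then comes from Skoda's uniform bound $\int_\D e^{\alpha c_jn_j|\phi_{n_j}|}\le C$ versus the lower bound $e^{\alpha\beta c_jn_j}L^{-2n_j}$ on a disc of radius $L^{-n_j}$, which diverges only because one assumes $d^+_{s,n_j+1}\ge c_jn_j\tilde d^+_{s+1,n_j}$ with $c_j\to\infty$. Second, your claim that $\log\phi_n$ is quasi-plurisubharmonic with a constant independent of $n$ is unsupported; the proof only yields a signed-measure bound on $dd^c\phi_n$ itself (a DSH bound on $\phi_n$, not log-plurisubharmonicity), and that is all that is needed once the argument is organized as above.
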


Since $f$ is 
 assumed to be
invertible in Theorem \ref{t:degrees}, it is enough to prove 
only the assertion on the degrees 
$d^+_s$ in that statement.  Because of the similarity of the proofs of the monotonicity of 
 $\{d^+_s\}_{0\leq s \leq p}$ 
in Theorems  \ref{t:degrees} and \ref{t:PL-degrees},
we will give their proofs in parallel. 
 From now on,
we work under the assumptions of 
Theorems \ref{t:degrees} 
and  \ref{t:PL-degrees}  and we fix an $s$ as in those statements.
So, below $f$  is either an invertible horizontal-like map or a polynomial-like map.
In the case of a
polynomial-like map (i.e., for $p=k$), 
we identify 
 $V$ with $M$ and  $N$ to 
a single point, see Remark \ref{r:PL}.

\medskip

Fix an open convex set $M^\star$  with $M''\Subset M^\star\Subset M'$. The quantities that we 
will introduce may depend 
on the domains $M', M^\star, M'',N',N''$,
even if this is not explicitly stated.    Recall that $\mathcal{H}_s(M^\star\times N')$ is the set of all horizontal positive closed currents of bi-dimension $(s,s)$ of finite mass on $M^\star\times N'$. We will need the following basic lemma.

\begin{lemma} \label{l:Lipschitz}
There is a constant $L>0$ such that the action of $f_*\colon \mathcal{H}_s(M^\star\times N') \to \mathcal H_s(M^\star\times N')$ 
is $L$-Lipschitz with respect to the semi-distance 
$\dist_{M^\star\times N'}$ defined as in \eqref{eq:dist}.
\end{lemma}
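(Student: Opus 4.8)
The plan is to reduce the Lipschitz bound for $f_*$ to the adjunction identity $\langle f_*T,\Omega\rangle=\langle T,f^*\Omega\rangle$ together with an elementary $\mathcal{C}^1$-estimate on $f^*\Omega$. Fix $S,S'\in\mathcal{H}_s(M^\star\times N')$ and a real smooth vertical $(s,s)$-form $\Omega$ on $M''\times N'$ with $\|\Omega\|_{\mathcal{C}^1}\le 1$; by the definition \eqref{eq:dist} of $\dist_{M^\star\times N'}$ it suffices to produce a constant $L>0$, independent of $S,S',\Omega$, with $|\langle f_*(S)-f_*(S'),\Omega\rangle|\le L\,\dist_{M^\star\times N'}(S,S')$. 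First I would note that $f_*$ maps $\mathcal{H}_s(M^\star\times N')$ into itself — the proof of Lemma \ref{l:collection-shift}(ii) adapts with $M^\star$ in place of $M'$, using \eqref{e:hp-inclusion} — and that, by the definitions of $f_*$ and $f^*$ as composites of push-forwards and pull-backs along $(\pi_1)_{|\Gamma}$ and $(\pi_2)_{|\Gamma}$ and the adjunction between these operations on currents, $\langle f_*(S)-f_*(S'),\Omega\rangle=\langle S-S',f^*(\Omega)\rangle$; here the pairings are well-defined because $f_*(S)-f_*(S')$ is supported in $D_{h,1}\subset M\times N''$, $S-S'$ is horizontal, and $\Omega$ is vertical.

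The core of the argument is then to show that $f^*(\Omega)$, suitably interpreted, is a bounded multiple of an admissible test form for $\dist_{M^\star\times N'}$. Since $\Gamma$ is a submanifold with $(\pi_1)_{|\Gamma}$ injective, this map is a biholomorphism onto $D_{v,1}$, and since $(\pi_2)_{|\Gamma}$ has finite fibers, $f^*(\Omega)=((\pi_1)_{|\Gamma})_*\big(((\pi_2)_{|\Gamma})^*\Omega\big)$ is a real smooth $(s,s)$-form on $D_{v,1}$. By \eqref{e:hp-inclusion} one has $\supp f^*(\Omega)\subset D_{v,1}\subset M''\times N$, so its $M''$-projection is relatively compact; combining condition (iv) of Definition \ref{d:HLM} with \eqref{e:hp-inclusion} (which force $((\pi_2)_{|\overline\Gamma})^{-1}(\overline{M''}\times\overline{N''})$, hence $\supp f^*(\Omega)$, to stay in a fixed compact subset $K_0$ of $D_{v,1}$ for every such $\Omega$), the form $f^*(\Omega)$ extends by zero to a real smooth vertical $(s,s)$-form on $M''\times N'$. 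Since $f$ is holomorphic on $D_{v,1}$ and hence has bounded first and second derivatives on a neighbourhood of $K_0$, there is a constant $L>0$ depending only on $f$ and on $M',M^\star,M'',N',N''$ with $\|f^*(\Omega)\|_{\mathcal{C}^1(M''\times N')}\le L\|\Omega\|_{\mathcal{C}^1(M''\times N')}\le L$.

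To finish, since $S-S'$ is horizontal on $M^\star\times N'$ and $f^*(\Omega)$ vanishes outside $M''\times N'\subset M^\star\times N'$, we have $\langle S-S',f^*(\Omega)\rangle=\langle S-S',(f^*(\Omega))_{|M''\times N'}\rangle$, and $L^{-1}(f^*(\Omega))_{|M''\times N'}$ is an admissible test form in \eqref{eq:dist}; hence $|\langle f_*(S)-f_*(S'),\Omega\rangle|\le L\,\dist_{M^\star\times N'}(S,S')$, and taking the supremum over $\Omega$ gives $\dist_{M^\star\times N'}(f_*(S),f_*(S'))\le L\,\dist_{M^\star\times N'}(S,S')$. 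I expect the one genuinely delicate point to be the compactness claim in the middle paragraph: that the pull-back operation does not spread the support of $f^*(\Omega)$ out to $\partial D_{v,1}$, uniformly over all test forms $\Omega$, which is exactly what makes the $\mathcal{C}^1$ bound uniform in $\Omega$. This is a routine consequence of the boundary conditions defining horizontal-like maps together with \eqref{e:hp-inclusion}, but it is the step that needs care; everything else is formal.
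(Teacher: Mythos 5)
Your proposal is correct and takes essentially the same route as the paper's proof: the adjunction identity $\langle f_*(S)-f_*(S'),\Omega\rangle=\langle S-S',f^*(\Omega)\rangle$ together with the observation that $f^*(\Omega)$ is, up to a uniform constant, an admissible vertical test form with $\mathcal{C}^1$-norm bounded independently of $\Omega$. The support analysis you carry out via condition (iv) of Definition \ref{d:HLM} and \eqref{e:hp-inclusion} is precisely what the paper compresses into the assertion that $f^*(\Omega)$ is vertical with uniformly bounded $\mathcal{C}^1$-norm on $M''\times N'$.
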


\proof

Let $\Omega$ be a smooth form with vertical support in $M''\times N'$ and whose $\Cc^1$-norm is at most 1.  Since $f^*(\Omega)$ is also vertical, for $S,S'\in \mathcal{H}_s(M^\star\times N') $ 
we have
$$|\langle f_*(S)-f_*(S'),\Omega\rangle| = |\langle S-S', f^*(\Omega)\rangle| \leq \dist_{M^\star\times N'}(S,S') \|f^*(\Omega)\|_{\Cc^1 (M''\times N')}.$$
Since
$\|f^*(\Omega)\|_{\Cc^1(M''\times N')}$
is bounded by a constant
independent from $\Omega$, the result follows.
\endproof
\begin{remark}
    Note that
   Lemma \ref{l:Lipschitz} is
    still true even when $f$ is 
    a non-invertible horizontal-like map.    On the other hand, it is not clear how to get a version of Lemma \ref{l:Lipschitz} for the action of $f^*$ in the case where $f$ is not invertible. In particular, such a result cannot hold 
in general
because,
as soon as the critical set intersects 
the Julia set $\mathcal J^+ = \partial \mathcal K^+$,
the 
$\mathcal C^1$, Lipschitz, and H\"older norms are not preserved by $f_*$.
\end{remark}

Let $\tilde{d}_{s,n}^+$ be the quantity obtained by replacing $M'$ with $M^\star$ in  the definition of $d_{s,n}^+$ in Definition \ref{d:degrees-currents}.  Recall that the
dynamical degree $d^+_{s+1}$ does not depend on the choice of $M'$ and $N'$, see Lemma \ref{l:degreeind}. The following more precise proposition implies Theorems \ref{t:degrees} and \ref{t:PL-degrees}.

{\begin{proposition} \label{p:degrees}
 Let $f$  be either an invertible horizontal-like map or a polynomial like map as above.
We have $d_{s,n+1}^+\lesssim 
  n
 \tilde{d}_{s+1,n}^+$
 as $n\to \infty$.
\end{proposition}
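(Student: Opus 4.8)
The plan is as follows. Fix $s$ as in the statement and an auxiliary convex open set $M^\star$ with $M''\Subset M^\star\Subset M'$; write $F:=(f,\mathrm{id})$ on $(M'\times N)\times\mathbb D$ and let $\pi\colon(M'\times N)\times\mathbb D\to M'\times N$ and $\pi_{\mathbb D}\colon(M'\times N)\times\mathbb D\to\mathbb D$ be the projections. Given an arbitrary $S\in\mathcal H_s^{(1)}(D')$, I would first pass to $S_1:=f_*S$: it is horizontal positive closed of bidimension $(s,s)$, supported in the \emph{fixed} set $D_{h,1}\subset M\times N''$ by \eqref{e:hp-inclusion} (this is exactly why the statement involves $d_{s,n+1}^+$ and not $d_{s,n}^+$: one application of $f$ forces the support into $\cdot\times N''$), of mass $\le A$ by Lemma \ref{l:collection-shift}, and since $f_*$ only depends on the restriction of a current to $D_{v,1}$ one has $(f^{n+1})_*S=(f^n)_*S_1$. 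Assuming $S_1\neq0$ (otherwise there is nothing to prove), apply Theorem \ref{t:family} to $S_1/\|S_1\|$ to obtain a positive closed current $\mathcal R$ of bidimension $(s+1,s+1)$ on $M'\times N\times\mathbb D$, its slices $\mathcal R_\theta$, and $R:=\pi_*\mathcal R$, a horizontal positive closed current of bidimension $(s+1,s+1)$ and mass $\le1$ on $M^\star\times N'$. By item~(iii), $S_1\lesssim\mathcal R_0$, hence $(f^{n+1})_*S\lesssim(f^n)_*\mathcal R_0$, and (up to the standard reductions of domains) the problem is reduced to proving $\|(f^n)_*\mathcal R_0\|\lesssim n\,\tilde d_{s+1,n}^+$. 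Observe that, because $R\in\mathcal H_{s+1}(M^\star\times N')$ has mass $\le1$, the definition of $\tilde d_{s+1,n}^+$ gives $\|(f^n)_*R\|_{M^\star\times N}\le\tilde d_{s+1,n}^+$, hence $\langle (f^n)_*R,\Theta\rangle\lesssim\tilde d_{s+1,n}^+$ for any smooth horizontal positive closed $(s+1,s+1)$-form $\Theta$ with vertical support in $M^\star\times N$; such forms dominating $\omega^{s+1}$ on $M''\times N'$ exist by Lemma \ref{l:general-smooth-bound-compact} precisely because $s+1\le p$.

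Next I would introduce the one-variable function carrying the argument. Fix a smooth positive vertical $(s,s)$-form $\Omega_0$ with $\mathcal C^1$-norm $\le1$ and comparable to $\omega^s$ on the relevant domains, and set $g_n(\theta):=\langle(f^n)_*\mathcal R_\theta,\Omega_0\rangle\ge0$, so that $g_n(0)\gtrsim\|(f^n)_*\mathcal R_0\|$. Using the slicing identity $\int_{\mathbb D}g_n\,\alpha=\langle(F^n)_*\mathcal R,\pi^*\Omega_0\wedge\pi_{\mathbb D}^*\alpha\rangle$ for test $(1,1)$-forms $\alpha$ on $\mathbb D$, together with the facts that $(F^n)_*\mathcal R$ is positive closed and supported in $(\cdot)\times\mathbb D_{1/2}$ and that $\pi_*(F^n)_*\mathcal R=(f^n)_*R$, one integrates by parts \emph{twice} in the variable $\theta$: the exact terms $dd^c(\pi_{\mathbb D}^*(\,\cdot\,)\wedge\pi^*\Omega_0)$ die against the closed current, and what remains are pairings of $(f^n)_*R$ against the smooth forms $dd^c\Omega_0$, $\partial\Omega_0$, $\bar\partial\Omega_0$, which are dominated by $\Theta$ as above. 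This yields the two crucial estimates
\[
\int_{\mathbb D}g_n\,d\sigma\ \lesssim\ \tilde d_{s+1,n}^+ ,\qquad\qquad \|dd^c g_n\|_{\mathbb D_{1/2}}\ \lesssim\ \tilde d_{s+1,n}^+ ,
\]
with constants independent of $n$ and of $S$. Finally, combining item~(iv) of Theorem \ref{t:family} with the $L$-Lipschitz property of $f_*$ for $\dist_{M^\star\times N'}$ (Lemma \ref{l:Lipschitz}), the map $\theta\mapsto(f^n)_*\mathcal R_\theta$ is continuous, so $g_n$ is continuous and $dd^c g_n$ carries no atom.

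The heart of the matter, and where the factor $n$ is produced, is the passage from these two estimates to a bound on $g_n(0)$. Write $dd^c g_n=\nu^+-\nu^-$ with $\|\nu^\pm\|\lesssim\tilde d_{s+1,n}^+$; unwinding the formula for $dd^c g_n$ gives $\nu^-\lesssim(\pi_{\mathbb D})_*\big((F^n)_*\mathcal R\wedge\pi^*\Theta\big)$, a positive measure on $\mathbb D$. Since $F$ preserves the $\theta$-coordinate, for $0<\epsilon<1/4$,
\[
\nu^-(\mathbb D_\epsilon)\ \lesssim\ \big\langle(F^n)_*\big(\mathcal R_{|(\cdot)\times\mathbb D_\epsilon}\big),\,\pi^*\Theta\big\rangle\ \lesssim\ \|(f^n)^*\Theta\|_{\mathcal C^0}\cdot\|\mathcal R\|_{(\cdot)\times\mathbb D_\epsilon}\ \lesssim\ C^n\epsilon^\alpha
\]
for some $\alpha>0$ (in fact $\alpha=2$), where $\|\mathcal R\|_{(\cdot)\times\mathbb D_\epsilon}\lesssim\epsilon^\alpha$ follows from the explicit construction of $\mathcal R$ in the proof of Theorem \ref{t:family} (near $\pi_{\mathbb D}^{-1}(0)$ the current $\mathcal R$ is an average of pull-backs of $S_1/\|S_1\|$ by the submersions $H_a$), and $C>0$ comes from $\|(f^n)^*(\text{smooth form})\|_{\mathcal C^0}\lesssim C^n$. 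Now solve $dd^c v=\nu^-$ on $\mathbb D_{1/2}$ with zero boundary values, so $v\le0$; then $g_n+v$ is subharmonic on $\mathbb D_{1/2}$, and the sub-mean value inequality on $\mathbb D_{1/4}$ with $v\le0$ gives $g_n(0)+v(0)\le c'\int_{\mathbb D}g_n\,d\sigma\lesssim\tilde d_{s+1,n}^+$. It remains to bound $-v(0)$, which by the explicit Green function of $\mathbb D_{1/2}$ satisfies $-v(0)\lesssim\|\nu^-\|+\int_0^{1/4}\nu^-(\mathbb D_r)\,\tfrac{dr}{r}$. Choosing $\epsilon_0:=\min\!\big(1/4,(\tilde d_{s+1,n}^+/C^n)^{1/\alpha}\big)$, one has $\log(1/\epsilon_0)\lesssim n$ (using $\tilde d_{s+1,n}^+\gtrsim1$, as in Remark \ref{r:dsge1}), and splitting the integral at $r=\epsilon_0$ — using $\nu^-(\mathbb D_r)\lesssim C^n r^\alpha$ for $r\le\epsilon_0$ and $\nu^-(\mathbb D_r)\le\|\nu^-\|\lesssim\tilde d_{s+1,n}^+$ for $r\ge\epsilon_0$ — gives $-v(0)\lesssim C^n\epsilon_0^\alpha+\tilde d_{s+1,n}^+\log(1/\epsilon_0)\lesssim n\,\tilde d_{s+1,n}^+$. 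Hence $g_n(0)\lesssim n\,\tilde d_{s+1,n}^+$ with constant independent of $S$; taking the supremum over $S$ yields $d_{s,n+1}^+\lesssim n\,\tilde d_{s+1,n}^+$. Finally, taking $n$-th roots, letting $n\to\infty$, using $n^{1/n}\to1$ and the domain-independence of the degrees (Lemmas \ref{l:degreeind} and \ref{l:submult-degrees}), one obtains $d_s^+\le d_{s+1}^+$, which completes Theorems \ref{t:degrees} and \ref{t:PL-degrees}; the polynomial-like case is identical, $N$ being a point.

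The step I expect to be the main obstacle is precisely this last passage. A naive argument using only the (exponential) Lipschitz continuity of $\theta\mapsto(f^n)_*\mathcal R_\theta$ would control $g_n(0)$ only up to a factor $L^{cn}$, and such an exponential loss is fatal after taking $n$-th roots; it is genuinely the $dd^c$-bound on $g_n$ — a Skoda-type potential estimate — combined with the tuned radius $\epsilon_0$ balancing the local bound $C^n\epsilon^\alpha$ near $\theta=0$ against the global bound $\tilde d_{s+1,n}^+$, that produces only a logarithmic, hence linear-in-$n$, loss. A second, omnipresent difficulty is the bookkeeping of the nested domains $M''\Subset M^\star\Subset M'$ (and the analogous ones for $N$): restricting a current and then pushing it forward must not destroy the mass estimates, and this is handled throughout using the inclusions \eqref{e:hp-inclusion} and the domain-independence of the dynamical degrees.
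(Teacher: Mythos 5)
Your overall architecture reproduces the first half of the paper's argument (construction of the deformation $\mathcal R$ via Theorem \ref{t:family}, the reduction of $\|dd^c g_n\|_{\D}$ to $\tilde d^+_{s+1,n}$ through $\pi_*\mathcal R$ and the projection formula, and the lower bound $g_n(0)\gtrsim (f^n)$-mass of the original current), but your endgame is genuinely different: instead of normalizing by $d^+_{s,n+1}$, proving a uniform $\DSH$ bound and invoking Skoda's exponential estimate together with the quantitative Lipschitz control in $\theta$ (Theorem \ref{t:family}(iv) plus Lemma \ref{l:Lipschitz}), you run a Riesz/Green-potential argument on $\D_{1/2}$ with a tuned radius $\epsilon_0$. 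That endgame is where the gap lies. Its key input is the decay $\nu^-(\D_\epsilon)\lesssim C^n\epsilon^\alpha$, which you reduce to $\|\mathcal R\|_{(\cdot)\times\D_\epsilon}\lesssim\epsilon^\alpha$ and justify only by appealing to "the explicit construction" of $\mathcal R$. This estimate does not follow from the stated properties (i)--(v) of Theorem \ref{t:family}: positivity, closedness, $\|\mathcal R\|\le 1$ and slice masses $\le 1$ control only the mixed component $\int_{\{|\theta|<\epsilon\}}\mathcal R\wedge\pi^*\omega^{s}\wedge\omega_\theta$, while the purely horizontal component $\int_{\{|\theta|<\epsilon\}}\mathcal R\wedge\pi^*\omega^{s+1}$ (exactly the one that pairs with $\pi^*((f^n)^*\Theta)$ in your bound for $\nu^-$) could a priori concentrate near the central fiber $\pi_{\D}^{-1}(0)$ without violating (i)--(v). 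So you must reopen the construction; and there the averaged pull-back part $\chi\mathcal S$ and the smooth form $\Omega_+$ do give $O(\epsilon^2)$, but the cut-off terms $dd^c\chi\wedge\mathcal U_{\mathcal S}$, $d\chi\wedge d^c\mathcal U_{\mathcal S}$, $d\mathcal U_{\mathcal S}\wedge d^c\chi$ live on $\supp d\chi$, which meets every slab $\{|\theta|<\epsilon\}$, and controlling their mass there requires uniform-in-$S$ $\mathcal C^1$ bounds on the potential $\mathcal U_{\mathcal S}$ near $\theta=0$ away from $\supp\mathcal S$. Lemma \ref{l:ddcpsi}, as stated, does not give this (its $\mathcal C^2$ bound is on a set $V'$ compactly contained in a region where the datum is smooth, with a constant measured by $\|\Psi\|_{\mathcal C^2(V)}$, and the paper only ever uses it away from $\pi_{\D}^{-1}(0)$). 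The estimate is plausible, but as written it is an unproved and non-trivial additional property of $\mathcal R$, so the step cannot be accepted as is.

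For comparison, the paper's proof is arranged precisely so that no mass-decay of $\mathcal R$ near $\pi_{\D}^{-1}(0)$ is ever needed: the only quantitative continuity used is property (iv) together with the $L^n$-Lipschitz estimate of Claim \ref{claim3}, which guarantees $\phi_n\ge\beta$ on a disc of radius comparable to $L^{-n}$; the factor $n$ then comes from comparing the Skoda exponential integrability (with weight $c_jn_j$, available because $\|dd^c\phi_{n_j}\|\lesssim (c_jn_j)^{-1}$ after the normalization by $d^+_{s,n_j+1}$) with the area $L^{-2n_j}$ of that disc. In your scheme the factor $n$ instead comes from $\log(1/\epsilon_0)\lesssim n$, which is fine once the decay estimate is in place; the remaining steps (the bound $\int_{\D}g_n\lesssim\|dd^cg_n\|$ via the compact support of $g_n$ in $\D_{1/2}$, the sub-mean value inequality for $g_n+v$, the Green-function bound for $-v(0)$, and the final bookkeeping of domains via \eqref{e:hp-inclusion} and Lemmas \ref{l:degreeind} and \ref{l:submult-degrees}) are correct. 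If you want to keep your endgame, you must either prove the slab-mass decay for the constructed $\mathcal R$ (strengthening Lemma \ref{l:ddcpsi} accordingly, with constants independent of $S$), or replace it by the paper's route, which trades it for the quantitative use of (iv) and Skoda's estimate.
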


Recall that we fix $0\leq s\leq p-1$  in all this section.
By the
Definition 
\ref{d:degrees-currents}
of 
$d^{+}_{s,n}$, 
for every $n\in \mathbb N$
there exists $S_{n-1} \in \mathcal H_s^{(1)}(D')$ 
such that
\begin{equation}\label{e:mass}
   \|(f^{n})_*(S_{n-1})\|_{M'\times N}
    \geq
     \frac{1}{2} d^+_{s,n}.
\end{equation}
 It follows from \eqref{e:mass}
 (applied for $n+1$ instead of $n$)
 and Lemma \ref{l:collection-shift}(ii)
 that
\begin{equation}\label{eq:massm''}
    \|(f^{n})_*(S_{n})\|_{M''\times N} \gtrsim 
\|(f^{n+1})_*(S_{n})\|_{M'\times N}\gtrsim d_{s,n+1}^+.
\end{equation}

 For every $n\in \mathbb N$, we apply Theorem \ref{t:family}, 
with $M',M^\star$, and $S_n$ instead of $M, M'$,  and $S$, respectively. This gives
a corresponding
current $\Rc_n$ on $M^\star\times N\times \D$ satisfying the properties 
(i)-(v) of $\mathcal R$
in that theorem. 
Fix a cut-off function
$0\le\chi\le 1$ with vertical support in $M^\star\times N$ 
and equal to 1 on $M''\times N$.
For each $n$,
define the
function $\phi_n:\D\to \mathbb R^+$ as
\begin{equation}\label{e:def-phin}
\phi_n(\theta):=\big\langle (d_{s,n+1}^+)^{-1} (f^n)_*(\Rc_{n,\theta}), \chi\omega^{s}\big\rangle,
\end{equation}
where $\Rc_{n,\theta} = \langle \mathcal R_n, \pi_{\mathbb D},\theta\rangle$
is the slice current of $\Rc_{n}$ for $\theta \in \mathbb D$ and the integral
 in \eqref{e:def-phin}
is taken over $M^\star\times N$.
\medskip

Proposition \ref{p:degrees} will follow
from the following three claims. We keep the
assumptions and the notations as above.
\begin{claim}\label{claim1}
There exists
a constant $\beta>0$, independent  of $n$, such that   $\phi_n(0)\geq 2\beta$
for all $n$.
\end{claim}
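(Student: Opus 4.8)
The plan is to bound $\phi_n(0)$ from below directly, by reversing the inequality $S_n\le c\,\Rc_{n,0}$ coming from Theorem~\ref{t:family}(iii) and then feeding in the mass lower bound \eqref{eq:massm''}. The crucial point is bookkeeping: the normalizing factor $(d^+_{s,n+1})^{-1}$ built into the definition \eqref{e:def-phin} of $\phi_n$ is exactly what absorbs the factor $d^+_{s,n+1}$ produced by \eqref{eq:massm''}, so that the resulting lower bound is a constant depending only on the domains and on the constant $c$ of Theorem~\ref{t:family}, hence independent of $n$.

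In detail, I would argue as follows. Recall that $\Rc_n$ is the current supplied by Theorem~\ref{t:family} applied with $M',M^\star,S_n$ in place of $M,M',S$. By property (iii) we have $S_n\le c\,\Rc_{n,0}$ on $M^\star\times N$ with $c>0$ independent of $n$, and by property (v) the current $\Rc_{n,0}$ is horizontal with support in $M^\star\times N'$; hence $c\,\Rc_{n,0}-S_n$ is a positive closed current in $\mathcal H_s(M^\star\times N')$, and applying the (linear, positivity-preserving) operator $(f^n)_*$ gives $(f^n)_*(\Rc_{n,0})\ge c^{-1}(f^n)_*(S_n)$ on $M^\star\times N$. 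Pairing with the non-negative $(s,s)$-form $\chi\omega^s$ we obtain
\[
\phi_n(0)=(d^+_{s,n+1})^{-1}\big\langle (f^n)_*(\Rc_{n,0}),\chi\omega^s\big\rangle\ \ge\ (d^+_{s,n+1})^{-1}\,c^{-1}\,\big\langle (f^n)_*(S_n),\chi\omega^s\big\rangle .
\]
Since $\chi\ge 0$ everywhere, $\chi\equiv 1$ on $M''\times N$, and $(f^n)_*(S_n)\wedge\omega^s$ is a positive measure, it follows that
\[
\big\langle (f^n)_*(S_n),\chi\omega^s\big\rangle\ \ge\ \int_{M''\times N}(f^n)_*(S_n)\wedge\omega^s\ =\ \|(f^n)_*(S_n)\|_{M''\times N}.
\]
Finally, \eqref{eq:massm''} gives $\|(f^n)_*(S_n)\|_{M''\times N}\ge c_0\, d^+_{s,n+1}$ for some $c_0>0$ independent of $n$; combining the last three displays yields $\phi_n(0)\ge c^{-1}c_0=:2\beta$ with $\beta>0$ independent of $n$, which is the claim.

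I do not expect a genuine obstacle in this step: it is essentially a matter of chaining together three elementary inequalities. The only points requiring care are, first, to check that the slice $\Rc_{n,0}$ and the push-forwards $(f^n)_*(\Rc_{n,0})$, $(f^n)_*(S_n)$ are well defined on the slightly shrunk domains, which follows from the standing inclusions \eqref{e:hp-inclusion}, Lemma~\ref{l:collection-shift}, and $M''\Subset M^\star\Subset M'$; and second, to keep track that every implicit constant — the $c$ of Theorem~\ref{t:family}, the $c_0$ of \eqref{eq:massm''}, and the comparison constants hidden in $\lesssim$ — is uniform in $n$, which is exactly what the corresponding uniformity statements provide.
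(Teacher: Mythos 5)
Your argument is correct and is essentially identical to the paper's proof: both chain Theorem \ref{t:family}(iii) applied to $S_n$, the fact that $\chi\equiv 1$ on $M''\times N$ (so the pairing dominates the mass there), and the lower bound \eqref{eq:massm''}, with all constants uniform in $n$. No issues.
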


\proof
By Theorem \ref{t:family}(iii)  there exists $c>0$ independent of $n$ such that 
\begin{align*}
   \phi_n(0)  &  = \big\langle (d_{s,n+1}^+)^{-1} (f^n)_*(\Rc_{n,0}), \chi\omega^{s}\big\rangle 
      \ge c \, \big\langle (d_{s,n+1}^+)^{-1} (f^n)_*(S_{n}), \chi\omega^{s}\big\rangle
        \\
    & \ge c \,
    \int_{M''\times N} 
       (d_{s,n+1}^+)^{-1} (f^n)_*(S_{n}) \wedge \omega^{s}
       = c\,  (d_{s,n+1}^+)^{-1}  \| (f^n)_* (S_n) \|_{M''\times N}.
\end{align*}
For the second inequality, we have used the fact that 
$\chi\equiv 1 $ on $M''\times N$. Finally,    \eqref{eq:massm''} implies that
the last 
expression is larger than $2\beta$, for some $\beta>0$.
The claim follows.
\endproof

We will see that
$\ddc \phi_{n}$ is a signed measure on $\D$.
When $dd^c \phi_{n}$
is a signed measure, 
we can define its mass 
$\|dd^c \phi_{n}\|$
as the sum of the masses of its positive and negative parts.

\begin{claim}\label{claim2}
The mass
of $\ddc\phi_{n}$ satisfies
$$ \|\ddc\phi_{n}\|\lesssim (d_{s,n+1}^+)^{-1} \tilde{d}_{s+1,n}^+ 
 \quad \mbox{ as } n \to \infty.$$
\end{claim}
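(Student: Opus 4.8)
\textbf{Proof proposal for Claim \ref{claim2}.}

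The plan is to relate $dd^c\phi_n$, computed on the parameter disc $\D$, to the mass of the pushforward current $(F^n)_*\mathcal R_n$ on $M^\star\times N\times\D$, where $F=(f,\mathrm{id})$, and then to bound that mass by $\tilde d_{s+1,n}^+$ using Theorem \ref{t:family}(v) (the horizontality of $\mathcal R_n$ in $M^\star\times(N\times\D)$). First I would rewrite $\phi_n$ globally: since $\Rc_{n,\theta}$ is the slice of $\Rc_n$ along $\pi_\D^{-1}(\theta)$ and $f^n$ acts only on the $M^\star\times N$ factor, the function $\theta\mapsto\langle (f^n)_*(\Rc_{n,\theta}),\chi\omega^s\rangle$ is, up to the normalising constant $(d_{s,n+1}^+)^{-1}$, the pushforward to $\D$ under $\pi_\D$ of the positive current $(F^n)_*\Rc_n$ wedged with the pullback of $\chi\omega^s$ from $M^\star\times N$. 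Concretely, writing $\widetilde{\mathcal R}_n:=(F^n)_*\Rc_n$ (well-defined since $F^n$ is a proper map on the support of $\Rc_n$, the support being horizontal in $M^\star\times(N\times\D)$), one has
\[
\phi_n(\theta)\,(d_{s,n+1}^+)=(\pi_\D)_*\big(\widetilde{\mathcal R}_n\wedge \pi_{M^\star\times N}^*(\chi\omega^s)\big),
\]
interpreted as a function of $\theta$ via the slicing; one should check that $\widetilde{\mathcal R}_n$ is still smooth off $\pi_\D^{-1}(0)$ so that this slicing is the naive restriction there.

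Next I would apply $dd^c$ in $\theta$ and commute it past $(\pi_\D)_*$. Since $\pi_{M^\star\times N}^*(\chi\omega^s)$ is $d$-closed along the fibres in the relevant directions only up to the $d\chi$ terms, the cleanest route is: $\chi\omega^s$ is a fixed smooth form on $M^\star\times N$, and $dd^c\big((\pi_\D)_*(\cdot)\big)=(\pi_\D)_*\big(dd^c(\cdot)\big)$ by commutation of $dd^c$ with proper pushforward; since $\widetilde{\mathcal R}_n$ is positive and closed and $\pi_{M^\star\times N}^*(\chi\omega^s)$ is a smooth form, $dd^c\big(\widetilde{\mathcal R}_n\wedge\pi^*(\chi\omega^s)\big)=\widetilde{\mathcal R}_n\wedge dd^c\pi^*(\chi\omega^s)=\widetilde{\mathcal R}_n\wedge\pi^*\big(dd^c(\chi\omega^s)\big)$, because $dd^c$ here only sees the $M^\star\times N$ variables. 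The form $dd^c(\chi\omega^s)$ is a smooth $(s+1,s+1)$-form on $M^\star\times N$, vertical (its support is contained in $\mathrm{supp}\,dd^c\chi$, which is vertical in $M^\star\times N$), and with $\mathcal C^0$-norm bounded by a constant independent of $n$. Hence $dd^c\phi_n$ is the pushforward to $\D$ of $(d_{s,n+1}^+)^{-1}\,\widetilde{\mathcal R}_n\wedge\pi^*\Omega'$ for a fixed smooth vertical $(s+1,s+1)$-form $\Omega'$ of bounded norm; in particular it is a signed measure on $\D$, and its total mass is
\[
\|dd^c\phi_n\|\;\lesssim\;(d_{s,n+1}^+)^{-1}\,\big\|\widetilde{\mathcal R}_n\wedge\pi^*|\Omega'|\big\|\;\lesssim\;(d_{s,n+1}^+)^{-1}\,\big\|(F^n)_*\Rc_n\big\|_{M^\star\times N\times\D}
\]
after dominating $\pm\Omega'$ by a fixed smooth horizontal positive closed form on a slightly larger product (using Lemma \ref{l:general-smooth-bound-compact}, legitimate since $s+1\le p$).

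Finally I would bound $\big\|(F^n)_*\Rc_n\big\|$ by $\tilde d_{s+1,n}^+$. Here is where Theorem \ref{t:family}(v) is essential: $\Rc_n$ is horizontal in $M^\star\times(N\times\D)$ with horizontal support in $M^\star\times(N'\times\D_{1/2})$, so $(\pi_{M^\star\times N})_*\Rc_n$ is a well-defined horizontal positive closed current of bi-dimension $(s+1,s+1)$ on $M^\star\times N$, of mass at most $1$ by Theorem \ref{t:family}. After normalising it has mass $\le 1$ and lies in $\mathcal H_{s+1}(M^\star\times N')$; since $F^n=(f^n,\mathrm{id})$ commutes with $\pi_{M^\star\times N}$ on the relevant supports, $(\pi_{M^\star\times N})_*(F^n)_*\Rc_n=(f^n)_*(\pi_{M^\star\times N})_*\Rc_n$, whose mass on $M^\star\times N$ is, by definition of $\tilde d_{s+1,n}^+$ (the analogue of $d_{s+1,n}^+$ computed on $M^\star$), at most $\tilde d_{s+1,n}^+$ up to a constant. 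Comparing the mass of $(F^n)_*\Rc_n$ on $M^\star\times N\times\D$ with the mass of its $\pi_{M^\star\times N}$-pushforward — using that the $\D$-directions are bounded and that $\Rc_n$ has horizontal support in $M^\star\times(N'\times\D_{1/2})$, so a fixed smooth horizontal positive closed form on the product dominates $\pi^*_{M^\star\times N}(\omega^{k-p-s-1})\wedge(\text{form in }\D)$ — gives $\big\|(F^n)_*\Rc_n\big\|_{M^\star\times N\times\D}\lesssim\tilde d_{s+1,n}^+$. Combining the three displays yields the claim. The main obstacle I anticipate is the bookkeeping in the second step: justifying the commutation $dd^c\circ(\pi_\D)_*=(\pi_\D)_*\circ dd^c$ together with $dd^c(\widetilde{\mathcal R}_n\wedge\pi^*(\chi\omega^s))=\widetilde{\mathcal R}_n\wedge\pi^*(dd^c(\chi\omega^s))$ at the level of currents (the second identity uses that $\widetilde{\mathcal R}_n$ is closed and that the two $dd^c$'s act on complementary sets of variables), and checking that all slicings are continuous across $\theta=0$ so that $\phi_n$ and $dd^c\phi_n$ genuinely extend across the origin as a function and a measure, rather than acquiring extra singular mass at $0$; the smoothness of $\mathcal R_n$ off $\pi_\D^{-1}(0)$ from Theorem \ref{t:family}(i) and its finite total mass from (ii) are what make this work.
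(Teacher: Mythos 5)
Your overall scheme is the same as the paper's: realize $\phi_n$ as a $\pi_\D$-pushforward of $\mathcal R_n$ (or its image) wedged with the test form, use closedness to reduce $\ddc\phi_n$ to a wedge with $\ddc\chi\wedge\omega^s$, and then invoke Theorem \ref{t:family}(ii),(v) through the projection $(\pi_{M^\star\times N})_*\Rc_n$, which has mass at most $1$. (The paper writes the same computation on the other side of the adjunction: it keeps $\Rc_n$ fixed and wedges with $\pi_{M^\star\times N}^*\big((f^n)^*(\chi\omega^s)\big)$, which also avoids any smoothness issue for the pushforward in the non-invertible polynomial-like case.) However, your last step contains a genuine gap. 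You bound $\|\ddc\phi_n\|$ by $(d_{s,n+1}^+)^{-1}\,\|(F^n)_*\Rc_n\|_{M^\star\times N\times\D}$, i.e.\ by the \emph{full} mass of the image current in the product, and then assert that this full mass is $\lesssim \tilde d_{s+1,n}^+$ because the mass of its $\pi_{M^\star\times N}$-pushforward is. This does not follow: writing the K\"ahler form of $\C^{k}\times\C$ as $\pi_{M^\star\times N}^*\omega+\ddc|\theta|^2$, the full mass of $(F^n)_*\Rc_n$ splits into $\int (F^n)_*\Rc_n\wedge\pi_{M^\star\times N}^*\omega^{s+1}$ (which is indeed $\le\tilde d_{s+1,n}^+$ by the projection formula and $\|(\pi_{M^\star\times N})_*\Rc_n\|\le 1$) plus a term $\int (F^n)_*\Rc_n\wedge\pi_{M^\star\times N}^*\omega^{s}\wedge\ddc|\theta|^2$, which the projection kills and which, by slicing in $\theta$, is governed by the masses of $(f^n)_*(\Rc_{n,\theta})$, i.e.\ by the degree-$s$ growth $d_{s,n}^+$. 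Since $d_{s,n}^+\gtrsim d_{s,n+1}^+$ by sub-multiplicativity, your inequality $\|(F^n)_*\Rc_n\|\lesssim\tilde d_{s+1,n}^+$ is essentially equivalent to $d_{s,n}^+\lesssim\tilde d_{s+1,n}^+$, which is what the whole argument is trying to prove; and the weaker bound one does get, $\|\ddc\phi_n\|\lesssim (d_{s,n+1}^+)^{-1}\tilde d_{s+1,n}^+ + O(1)$, is useless for the Skoda argument in Proposition \ref{p:degrees}. The appeal to Lemma \ref{l:general-smooth-bound-compact} and to a comparison between the product mass and the projected mass does not repair this, because no such comparison for horizontal currents of bi-dimension $(s+1,s+1)$ is established (and it is exactly in the $\ddc|\theta|^2$-direction that it would have to be proved).

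The fix is to never pass through the full product mass: after bounding $\pm\ddc\chi\le c\,\omega$, keep only the form $\pi_{M^\star\times N}^*\big((f^n)^*(\omega^{s+1})\big)$ (equivalently, in your formulation, $\pi_{M^\star\times N}^*\omega^{s+1}$ against $(F^n)_*\Rc_n$), which is pulled back from the $\C^k$-factor. The wedge with $\Rc_n$ is then a positive measure whose mass is preserved by $(\pi_{M^\star\times N})_*$; the projection formula turns it into $(\pi_{M^\star\times N})_*(\Rc_n)\wedge (f^n)^*(\omega^{s+1})$, and the conclusion follows from $\|(\pi_{M^\star\times N})_*\Rc_n\|\le 1$ together with the definition of $\tilde d_{s+1,n}^+$ (here the horizontality from Theorem \ref{t:family}(v) guarantees that $(\pi_{M^\star\times N})_*\Rc_n$ is a well-defined horizontal positive closed current on $M^\star\times N'$, so it is an admissible competitor after normalization). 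With this modification your argument coincides with the paper's proof.
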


\proof 
Recall that we denote by $\pi_{M^\star\times N}$ the projection of $(M^\star\times N) \times \D$
to the factor $M^\star\times N$.
Set 
\[T_n:=(d_{s,n+1}^+)^{-1}\Rc_{n}\wedge \pi_{M^\star\times N}^*((f^n)^* ( \chi\omega^{s})),\]
with $\chi$ as above. 
By  Theorem  \ref{t:family}(v),
the current
$\Rc_{n}$ is horizontal on $M^\star\times (N'\times \D)$. So,
$T_n$ is 
(well-defined and)
compactly supported
in ${M^\star\times N'}\times\D$. In particular,
$\pi_\D$ is proper on the support of $T_n$.
Hence, we have
$$(\pi_\D)_* T_n=
(\pi_\D)_*
\big((d_{s,n+1}^+)^{-1}\Rc_{n}\wedge \pi_{M^\star\times N}^*((f^n)^* ( \chi\omega^{s}))\big)
= \phi_n.$$
 It follows that
$$\ddc \phi_n= \ddc\big((\pi_\D)_* T_n\big)=(\pi_\D)_*(\ddc T_n)= (\pi_\D)_*\big[(d_{s,n+1}^+)^{-1}\Rc_{n}\wedge \pi_{M^\star\times N}^*((f^n)^* ( \ddc \chi \wedge \omega^{s}))\big],$$
where we used that both $\Rc_{n}$ and $\omega$
are closed.

Since $\chi$ is smooth 
there exists a positive constant $c$
such that
$-c\omega \leq \ddc \chi \leq c \omega$
on $M^\star\times N$.
Therefore, we have
\[
\big|
(d_{s,n+1}^+)^{-1}\Rc_{n}\wedge \pi_{M^\star\times N}^*((f^n)^* ( \ddc \chi \wedge \omega^{s}))
\big| 
 \lesssim
(d_{s,n+1}^+)^{-1}\Rc_{n}\wedge \pi_{M^\star\times N}^*\left((f^n)^* (  \omega^{s+1})\right),
\]
where we denoted by $|\cdot|$ the sum of the positive and negative parts of the measure in the left-hand side.
It follows that 
\[
\begin{aligned}
\|\ddc \phi_n\|
&
\lesssim
\|(\pi_{\D})_* \big( (d_{s,n+1}^+)^{-1}\Rc_{n}\wedge \pi_{M^\star\times N}^*((f^n)^* (  \omega^{s+1}))\big)\|_{\D}\\
& = 
(d_{s,n+1}^+)^{-1}
\| \Rc_{n}\wedge \pi_{M^\star\times N}^*((f^n)^* (  \omega^{s+1}))\|_{M^\star\times N \times \D},
\end{aligned}\]
where in the last step we 
 used
 the fact that 
the pushforward operator $(\pi_\D)_*$
preserves the mass of positive measures.
To conclude, we need to bound the last term as in the statement.

\medskip

Since $\Rc_{n}$ is horizontal on $M^\star\times (N'\times \D)$, the projection 
$\pi_{M^\star\times N'}$ is proper on the support of $\Rc_{n}$. 
 It follows that $(\pi_{M^\star\times N})_*(\Rc_{n})$ is well-defined and is a horizontal positive closed current of bi-dimension $(s+1,s+1)$ on $M^\star\times N'$.  Moreover,  by Theorem \ref{t:family}(ii), the mass of  $(\pi_{M^\star\times N})_*(\Rc_{n})$ is less than or equal to $1$}.
 Using again the fact that $\pi_{M^\star\times N}$ preserves the mass of positive measures,
it follows that
\begin{align*}
    \left\|
       \Rc_{n}
     \wedge \pi_{M^\star\times N}^*((f^n)^* (  \omega^{s+1}))\right\|_{M^\star\times N \times \D}
    & =\left\|(\pi_{M^\star\times N})_*\big[
       \Rc_{n}\wedge \pi_{M^\star\times N}^*((f^n)^* (  \omega^{s+1}))\big]\right\|_{M^\star\times N}\\
    &=
       \|(\pi_{M^\star\times N})_*(\Rc_{n})\wedge (f^n)^* (\omega^{s+1})\|_{M^\star\times N}\\
    &
    \leq
       \tilde{d}_{s+1,n}^+,
\end{align*}
where in the last step we used the fact
that 
 $\|(\pi_{M^\star\times N})_* \mathcal R_n\|\leq 1$
and Definition
\ref{d:degrees-currents} 
(recall that we replace $M'$ with $M^\star$ to define 
$\tilde{d}_{s+1,n}^+$).
We deduce from 
the inequalities above and
the definition of $\phi_n$
that
$$\|dd^c\phi_{n}\|\lesssim (d_{s,n+1}^+)^{-1} \tilde{d}_{s+1,n}^+
\quad \mbox{ as } n\to \infty.$$
The proof of the claim is complete.
\endproof

Define now the function $\Phi_n: \mathcal{H}_s (M^\star\times N')\to \R$ by
$$\Phi_n(S):= \big\langle (d_{s,n+1}^+)^{-1} (f^n)_*(S), \chi\omega^{s}\big\rangle,$$
where 
the integral is taken over $M^\star\times N$ and,
 for every $n$,
is  well-defined since $(f^n)_* (S)$
is horizontal
and $\chi$ has vertical support in $M^\star\times N$.

\begin{claim}\label{claim3}
There exist
a constant
$L>0$
independent of $n$
such that
$$|\Phi_n(S)-\Phi_n(S')|\lesssim L^n \, \dist_{M^\star\times N'}(S,S')\quad \mbox{ as } n\to \infty.$$
\end{claim}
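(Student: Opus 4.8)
The plan is to bound $|\Phi_n(S)-\Phi_n(S')|$ by iterating the Lipschitz estimate of Lemma~\ref{l:Lipschitz}. Since $\Phi_n(S)-\Phi_n(S')=(d_{s,n+1}^+)^{-1}\langle (f^n)_*(S)-(f^n)_*(S'),\chi\omega^s\rangle$, and since $d_{s,n+1}^+\geq 1$ by Remark~\ref{r:dsge1} together with Lemma~\ref{l:submult-degrees}, it is enough to show that $|\langle (f^n)_*(S)-(f^n)_*(S'),\chi\omega^s\rangle|\lesssim L^n\,\dist_{M^\star\times N'}(S,S')$ for a constant $L\geq 1$ independent of $n$. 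One would like to apply the semi-distance $\dist_{M^\star\times N'}$ directly, testing $(f^n)_*S$ and $(f^n)_*S'$ against $\chi\omega^s$; but $\chi$ equals $1$ on $M''\times N$, so $\chi\omega^s$ is \emph{not} vertical with respect to $M''$ and hence is not an admissible test form in \eqref{eq:dist}. Resolving this is the main obstacle.

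My plan to get around it is to peel off one copy of $f_*$ before estimating. Writing $(f^n)_*=f_*\circ(f^{n-1})_*$ and using the adjunction $\langle f_*T,\psi\rangle=\langle T,f^*\psi\rangle$ between $f_*$ and $f^*$ (legitimate exactly as in the proof of Lemma~\ref{l:Lipschitz}, because $(f^{n-1})_*(S-S')$ is a horizontal current whose support meets the $N$-factor inside $\overline{N''}$), I would rewrite
\[
\big\langle (f^n)_*(S)-(f^n)_*(S'),\,\chi\omega^s\big\rangle=\big\langle (f^{n-1})_*S-(f^{n-1})_*S',\;f^*(\chi\omega^s)\big\rangle .
\]
The gain is that $f^*(\chi\omega^s)$ is a smooth $(s,s)$-form supported in $D_{v,1}$, and $D_{v,1}\subset M''\times N$ by \eqref{e:hp-inclusion}; since $\chi$ is vertical, $f^*(\chi\omega^s)$ is therefore, after restriction to $M^\star\times N'$, a vertical $(s,s)$-form on $M''\times N'$, with $\mathcal C^1$-norm bounded by a constant $C_0$ depending only on $f$, $\chi$ and the domains but not on $n$ --- this is precisely the kind of bound established in the proof of Lemma~\ref{l:Lipschitz}. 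So $f^*(\chi\omega^s)/C_0$ is an admissible test form for $\dist_{M^\star\times N'}$. Here one really uses the contracting behaviour of $f$ in the $M$-direction, which forces the image of $f^*$ into the thin slab $M''\times N$.

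It then remains to iterate Lemma~\ref{l:Lipschitz}. Since $f_*$ maps $\mathcal H_s(M^\star\times N')$ into itself and is $L$-Lipschitz for $\dist_{M^\star\times N'}$, the composition $(f^{m})_*$ is $L^m$-Lipschitz for every $m\geq 0$, and in particular $\dist_{M^\star\times N'}((f^{n-1})_*S,(f^{n-1})_*S')\leq L^{n-1}\dist_{M^\star\times N'}(S,S')$. Feeding this into the displayed identity and using $\|f^*(\chi\omega^s)\|_{\mathcal C^1}\leq C_0$ together with $(d_{s,n+1}^+)^{-1}\leq 1$, I would conclude
\[
|\Phi_n(S)-\Phi_n(S')|\leq C_0\,L^{n-1}\,\dist_{M^\star\times N'}(S,S')\lesssim L^n\,\dist_{M^\star\times N'}(S,S'),
\]
which is the claim (for $n=1$ the same computation applies with $(f^0)_*=\id$). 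The only delicate step is making the test form admissible for the semi-distance, and it is dispatched by that single use of the adjunction; the rest is a routine iteration.
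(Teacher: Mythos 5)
Your proposal is correct and follows essentially the same route as the paper: peel off one factor of $f_*$ by the adjunction so that the pairing is against $f^*(\chi\omega^s)$, which by \eqref{e:hp-inclusion} is a vertical smooth form supported in $M''\times N$ with $\mathcal C^1$-norm bounded independently of $n$, hence (after normalization) an admissible test form for $\dist_{M^\star\times N'}$, and then iterate Lemma \ref{l:Lipschitz} to get the factor $L^{n-1}$, absorbing $(d_{s,n+1}^+)^{-1}\lesssim 1$ via Remark \ref{r:dsge1}. No gaps; this matches the paper's proof.
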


\proof  
Since $\chi$ is
a
smooth function with vertical support in $M^\star\times N$,
 by
\eqref{e:hp-inclusion},
$f^*(\chi \omega^s)$ is a vertical smooth form with support in $M''\times N$.
Hence, 
 for all $n\geq 1$,
by Lemma \ref{l:Lipschitz}
we have
\begin{align*}
   |\Phi_n(S)-\Phi_n(S')|& =(d_{s,n+1}^+)^{-1}
|\langle
(f^{n-1})_*(S-S'),f^*(\chi \omega^s)\rangle|\\
&\leq (d_{s,n+1}^+)^{-1}
\|f^*(\chi \omega^s)\|_{\Cc^1(M''\times N')}
{L}^{n-1}\dist_{M^\star\times N'}(S,S')\\
&\lesssim  (d_{s,n+1}^+)^{-1}
{L}^{n}\dist_{M^\star\times N'}(S,S')
\end{align*}
for some positive constant $L$
independent of $n$.
Since  $(d_{s,n+1}^+)^{-1}\lesssim 1$ 
(see Remark \ref{r:dsge1}), 
the assertion follows.
\endproof

We can now 
prove
 Proposition \ref{p:degrees}, which also
concludes the proof of Theorems \ref{t:degrees}
and \ref{t:main-intro}
(for $f$ invertible horizontal-like map)
and Theorems \ref{t:PL-degrees} and \ref{c:PL-intro}
(for $f$ polynomial-like map).

\proof[Proof of Proposition \ref{p:degrees}] 
We keep the notations and definitions as above.
We assume by contradiction that 
\begin{equation*}
\limsup_{n\to\infty} {\frac{d_{s,n+1}^+} 
{n
\tilde{d}_{s+1,n}^+}} =+\infty.
\end{equation*}
Hence, 
there exist  two sequences $n_j\to\infty$ and $c_j\to\infty$ such that
\begin{equation}\label{eq:ds=ds1}
    d_{s,n_j +1}^+= c_j  n_j
       \tilde{d}_{s+1,n_j}^+
    \quad
    \mbox{ for all } j.
\end{equation}

By
 Theorem \ref{t:family}(v),
 we see  that  $\phi_{n_j}(\theta)=0$
   for all
  $j$ and $1/2<|\theta|<1$.
 By
Claim \ref{claim2} 
and \eqref{eq:ds=ds1},
 we also have that
$\|dd^c\phi_{n_j}\|\lesssim (d_{s,n_j+1}^+)^{-1} \tilde{d}_{s+1,n_j}^+= (c_j n_j)^{-1}$
as $j\to \infty$.
Hence, 
the family 
$\big\{ c_j 
n_j \phi_{n_j}\big\}_{j \in \mathbb N}$ 
is a bounded family of $\DSH$ functions on $\mathbb P^1$, i.e.,
differences of quasi-psh functions on $\mathbb P^1$
(see \cite[Appendix A.4]{DS10}).
Indeed, it is enough to apply Lemma \ref{l:DSH} below with $X= \mathbb P^1$ and $\mu$ a smooth probability measure supported on $\mathbb P^1 \setminus \mathbb D$.
 Observe that, by the same lemma, the fact that the family is bounded does not depend on the choice of the measure $\mu$ used to define the norm.

According to Skoda-type estimates \cite{S82}, 
there exist
 positive constants $\alpha,C$,
  independent of $j$,
such that
\begin{equation}\label{eq:skoda}
  \int_{\D} e^{\alpha c_j
   n_j |\phi_{n_j}|} \leq C
  \quad \mbox{ for all } j,  
\end{equation}
where the integral is with respect to 
the Lebesgue measure on $\D$.

Theorem \ref{t:family}(iv) and 
Claim  \ref{claim3} imply that
$$|
\phi_{n_j}(0)-
\phi_{n_j}(\theta)|\lesssim  
L^{n_j}
\dist_{M^\star\times N'}{(\Rc_{n_j,0},\Rc_{n_j,\theta})}\lesssim L^{n_j}|\theta| 
\mbox{ for all } \theta \in \mathbb D.$$
Hence, for some 
 $r_j$ such that
$L^{-n_j}\lesssim r_j \lesssim L^{-n_j}$, by Claim \ref{claim1}
we have
\[
\phi_{n_j}(\theta) \geq \beta 
\quad \mbox{ for } 
\quad 
|\theta|\leq r_j
\quad \mbox{ and all } j \mbox{ sufficiently large}.\] 
This and the above estimate
\eqref{eq:skoda} imply that, for all $j$ sufficiently large,
\[
C  \geq \int_{\D} e^{\alpha c_j 
n_j
|\phi_{n_j}|}
 \ge \int_{\D_{r_j}} e^{\alpha c_j 
n_j
|\phi_{n_j}|}
      \gtrsim 
    e^{\alpha c_j n_j \beta} r_j^2
    \simeq
    e^{\alpha c_j n_j \beta} L^{-2n_j}
    = e^{n_j
    \left(
      \alpha c_j \beta
    -2 \log L\right)}.
    \]
Since $c_j\to\infty $, this gives the desired contradiction, and completes the proof.
\endproof

 \begin{lemma}[{\cite[p.283]{DS10}}]\label{l:DSH}
  Let $\mu$ be a positive measure on a
  compact K\"ahler manifold $X$ which integrates the $\DSH$ functions. Then
  \[
  \|u\|_\mu := \big|\int u \mu\big| + \min \|T^{\pm}\|
  \]
  (where the minimum is taken over all positive closed $(1,1)$-currents $T^{\pm}$ such that $dd^c u = T^+-T^-$)
  defines a norm on $\DSH(X)$.
   Moreover, for all $\mu$ and $\mu'$ which integrate the DHS functions, the norms $\|\cdot\|_\mu$ and $\|\cdot\|_{\mu'}$ are equivalent.
  \end{lemma}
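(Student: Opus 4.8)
The plan is to first verify the three axioms of a norm for $\|\cdot\|_\mu$, and then to derive the equivalence of the $\|\cdot\|_\mu$ for varying $\mu$ from the compactness of normalized quasi-plurisubharmonic potentials. Write $q=\dim_{\mathbb C}X$. For well-definedness, given a representation $dd^c u=T^+-T^-$ with $T^\pm$ positive closed $(1,1)$-currents, one integrates against $\omega^{q-1}$: since $dd^c u\wedge\omega^{q-1}=dd^c(u\,\omega^{q-1})$ and $X$ is closed, Stokes' theorem gives $\|T^+\|=\|T^-\|$, so $\min\|T^\pm\|$ is unambiguous. That this infimum is attained I would obtain from weak compactness of positive closed currents of bounded mass: a minimizing sequence $(T^+_j)$ subconverges weakly to $T^+\ge 0$ closed, and then $T^-:=T^+-dd^c u=\lim(T^+_j-dd^c u)\ge 0$ is closed, so lower semicontinuity of the mass shows $(T^+,T^-)$ realizes the minimum. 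Homogeneity is immediate (rescale the $T^\pm$, interchanging them for negative scalars), and subadditivity follows by adding two representations and using that masses of positive currents add. For positivity one notes that $\mu(X)>0$ necessarily (else $\|\cdot\|_\mu$ kills constants); if $\|u\|_\mu=0$ then $\min\|T^\pm\|=0$ forces $T^\pm=0$, hence $dd^c u=0$, so $u$ is weakly, hence smoothly, harmonic for the Kähler metric and therefore constant on the connected manifold $X$, and $\int u\,\mu=0$ gives $u=0$.

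For the equivalence, the key observation is that the term $\min\|T^\pm\|$ is independent of $\mu$, so it suffices to produce a constant $C$ with $\big|\int u\,\mu-\int u\,\mu'\big|\le C\min\|T^\pm\|$ for every $\DSH$ function $u$, with $dd^c u=T^+-T^-$. Both sides vanish on constants and are positively homogeneous, so after subtracting a constant and rescaling one may assume $\int u\,\mu=0$ and that $u$ admits a representation with $\|T^\pm\|\le 1$; it then suffices to bound $\big|\int u\,\mu'\big|$. The common cohomology class $\alpha$ of $T^+$ and $T^-$ is pseudoeffective and satisfies $\alpha\cdot\{\omega\}^{q-1}=\|T^\pm\|\le 1$; since any nonzero pseudoeffective class is represented by a nonzero positive closed current, hence has positive $\omega^{q-1}$-degree, this linear functional is strictly positive off the origin on the (finite-dimensional) pseudoeffective cone and therefore proper on it, so $\alpha$ ranges over a fixed compact set. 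Choosing smooth closed representatives $\theta_\alpha$ depending linearly on $\alpha$, I may arrange $\theta_\alpha\le C_0\,\omega$ uniformly and write, by the global $dd^c$-lemma, $T^\pm=\theta_\alpha+dd^c\psi^\pm$ with $\psi^\pm$ being $C_0\omega$-plurisubharmonic and normalized by $\sup_X\psi^\pm=0$. Since $dd^c(u-\psi^++\psi^-)=0$, the positivity argument above gives $u=\psi^+-\psi^-+c$ for a constant $c$, and the normalization $\int u\,\mu=0$ forces $c=-\mu(X)^{-1}\int(\psi^+-\psi^-)\,\mu$.

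It then remains only to bound $\int\psi^\pm\,\mu$ and $\int\psi^\pm\,\mu'$ uniformly over all such normalized potentials: this gives $|c|\le C_1$ and hence $\big|\int u\,\mu'\big|=\big|c\,\mu'(X)+\int(\psi^+-\psi^-)\,\mu'\big|\le C_2$, and exchanging the roles of $\mu$ and $\mu'$ yields the two-sided equivalence. This uniform integrability is where I expect the real difficulty to lie: the family of $C_0\omega$-plurisubharmonic functions with $\sup_X=0$ is compact in $L^1(X)$ and bounded in every $L^p(X)$ with $p<\infty$, but one must upgrade the bare hypothesis that $\mu,\mu'$ integrate $\DSH$ functions to a bound that is \emph{uniform} over this compact family — equivalently, to the continuity of integration against $\mu$ (and $\mu'$) on subsets of $\DSH(X)$ of bounded $\|\cdot\|$. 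I would obtain this from a Hartogs-type convergence statement for normalized quasi-plurisubharmonic functions (if $\psi_j\to\psi_\infty$ in $L^1$ within the family then $\int\psi_j\,\mu\to\int\psi_\infty\,\mu$, and likewise for $\mu'$), which forces the desired boundedness by a routine contradiction along a convergent subsequence; this is precisely the content of \cite[p.~283]{DS10}.
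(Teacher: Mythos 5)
The paper offers no internal proof of this lemma: it is quoted from \cite[p.283]{DS10}, so your attempt has to stand on its own. Its skeleton is sound and is essentially the standard one: well-definedness and attainment of the minimum, the norm axioms (with the remark that $\mu\neq 0$ and $X$ connected are needed for definiteness), the reduction of the equivalence to a uniform bound of $\big|\int u\,d\mu'\big|$ over those $u$ with $\int u\,d\mu=0$ admitting a representation $dd^c u=T^+-T^-$ with $\|T^\pm\|\le 1$, and the decomposition $u=\psi^+-\psi^-+c$ with $\psi^\pm$ being $C_0\omega$-plurisubharmonic, $\sup_X\psi^\pm=0$, obtained from compactness of the normalized pseudoeffective classes and the $dd^c$-lemma. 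One slip in the framing: the displayed claim $\big|\int u\,d\mu-\int u\,d\mu'\big|\le C\min\|T^\pm\|$ for \emph{every} $\DSH$ function $u$ is false when $\mu(X)\ne\mu'(X)$ (test it on constants), and ``both sides vanish on constants'' is incorrect; however, the normalized statement you actually aim at does imply the equivalence of the norms (the constant term is controlled by $|\int u\,d\mu|$), so this is only a matter of rewording.

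The genuine gap is exactly where you locate it, and it is not closed: you need $\sup\big\{\int(-\psi)\,d\mu:\ dd^c\psi\ge -C_0\omega,\ \sup_X\psi=0\big\}<\infty$ (and likewise for $\mu'$), and you propose to deduce it from a Hartogs-type continuity statement that you neither prove nor obtain from an independent source — indeed you attribute it to the very page from which the lemma is quoted, which makes the argument circular; moreover, continuity of $\psi\mapsto\int\psi\,d\mu$ on the compact family is stronger than what is needed and is not an evident consequence of the bare hypothesis that $\mu$ integrates $\DSH$ functions. The standard way to close the gap is a series (gliding-hump) argument using only that hypothesis: if there were functions $\psi_j$ in the family with $\int(-\psi_j)\,d\mu\ge 4^j$, set $\psi:=\sum_{j\ge1}2^{-j}\psi_j$; the partial sums are negative, $C_0\omega$-plurisubharmonic, and uniformly bounded in $L^1(X)$ by the compactness of the normalized family, so the decreasing limit $\psi$ is $C_0\omega$-plurisubharmonic and not identically $-\infty$; it belongs to $\DSH(X)$ since $dd^c\psi=(dd^c\psi+C_0\omega)-C_0\omega$, yet $\int(-\psi)\,d\mu\ge 2^{-j}\,4^j=2^j$ for every $j$, so $\psi\notin L^1(\mu)$, contradicting the hypothesis. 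With this uniform bound in place, your argument does yield the stated equivalence.
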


\end{document}